\author[Ilya Kapovich]{Ilya Kapovich}
\address{\tt Department of Mathematics, University of Illinois at
 Urbana-Champaign, 1409 West Green Street, Urbana, IL 61801, USA
 \newline http://www.math.uiuc.edu/\~{}kapovich/} \email{\tt
  kapovich@math.uiuc.edu}
\author[Tatiana Nagnibeda]{Tatiana Nagnibeda}
\address{\tt
Section de math\'ematiques,
Universit\'e de Gen\`eve,
2-4, rue du Li\`evre, c.p. 64,
1211 Gen\`eve, Switzerland
\newline http://www.unige.ch/math/folks/nagnibeda}
\email{\tt tatiana.smirnova-nagnibeda@unige.ch}
\title[Subset currents on free groups]{Subset currents on free groups}
\newtheorem{thm}{Theorem}[section] \newtheorem{lem}[thm]{Lemma}
\newtheorem{cor}[thm]{Corollary} 
\newtheorem{prop}[thm]{Proposition} \theoremstyle{definition}
\newtheorem{defn}[thm]{Definition}
\newtheorem{notation}[thm]{Notation}
\newtheorem{conv}[thm]{Convention} \newtheorem{rem}[thm]{Remark}
\newtheorem{propdfn}[thm]{Proposition-Definition}
 \newtheorem{prob}[thm]{Problem}
\def\strutdepth{\dp\strutbox}
\def \ss{\strut\vadjust{\kern-\strutdepth \sss}}
\def \sss{\vtop to \strutdepth{
\baselineskip\strutdepth\vss\llap{$\diamondsuit\;\;$}\null}}
\def\strutdepth{\dp\strutbox}
\def \sst{\strut\vadjust{\kern-\strutdepth \ssss}}
\def \ssss{\vtop to \strutdepth{
\baselineskip\strutdepth\vss\llap{$\spadesuit\;\;$}\null}}
\def\strutdepth{\dp\strutbox}
\def \ssh{\strut\vadjust{\kern-\strutdepth \sssh}}
\def \sssh{\vtop to \strutdepth{
\baselineskip\strutdepth\vss\llap{$\heartsuit\;\;$}\null}}
\newcommand{\R}{\mathbb R}
\newcommand{\Z}{\mathbb Z}
\def\epsilon{\varepsilon}
\def\phi{\varphi}
\def\Pr{\mathbb P}
\newcommand{\supp}{\mbox{Supp}}
\newcommand{\Curr}{\mbox{Curr}}
\newcommand{\Out}{\mbox{Out}}
\newcommand{\Aut}{\mbox{Aut}}
\newcommand{\Inn}{\mbox{Inn}}
\newcommand{\vol}{\mbox{vol}}
\newcommand{\bd}{\partial}
\newcommand{\bdsq}{\partial^2}
\newcommand{\PCN}{\Pr\Curr(\FN)}
\newcommand{\FN}{F_N}   
\newcommand{\cvn}{\mbox{cv}_N}
\newcommand{\cvnbar}{\overline{\mbox{cv}}_N}
\newcommand{\CVN}{\mbox{CV}_N}
\newcommand{\CVNbar}{\overline{\mbox{CV}}_N}
\newcommand{\gcn}{\mathcal S\Curr(\FN)}
\newcommand{\pgcn}{\mathbb P\gcn}
\newcommand{\rk}{\rm rk}
\newcommand{\rrk}{\overline{\rm rk}}
\newcommand\isom{\mathrel{\text{%
   \setbox0\hbox{$\rightarrow$}%
   \rlap{\hbox to \wd0{\hss\raisebox{0.9\height}{$\sim$}\hss}}\box0
}}}
\begin{document}

\begin{abstract}
We introduce and study the space $\gcn$ of \emph{subset currents} on the free group $F_N$, and, more generally, on a word-hyperbolic group. A subset current on $F_N$ is a positive $F_N$-invariant locally finite Borel measure on the space $\mathfrak C_N$ of all closed subsets of $\partial F_N$ consisting of at least two points. The well-studied space $\Curr(F_N)$ of geodesics currents, positive $F_N$-invariant locally finite Borel measures defined on pairs of different boundary points, is contained in the space of subset currents as a closed $\Bbb R$-linear $\Out(F_N)$-invariant subspace. Much of the theory of $\Curr(F_N)$ naturally extends to the $\gcn$ context, but new dynamical, geometric and algebraic features also arise there. While geodesic currents generalize conjugacy classes of nontrivial group elements, a subset current is a measure-theoretic generalization of the conjugacy class of a nontrivial finitely generated subgroup in $F_N$.  If a free basis $A$ is fixed in $F_N$, subset currents may be viewed as $F_N$-invariant measures on a \lq\lq branching\rq\rq\ analog of the geodesic flow space for $F_N$, whose elements are infinite subtrees (rather than just geodesic lines) of the Cayley graph of $F_N$ with respect to $A$. Similarly to the case of geodesics currents, there is a continuous $\Out(F_N)$-invariant  \lq\lq co-volume form\rq\rq\ between the Outer space $\cvn$ and the space $\gcn$ of subset currents. Given a tree $T\in \cvn$ and the \lq\lq counting current\rq\rq\ $\eta_H\in \gcn$ corresponding to a finitely generated nontrivial subgroup $H\le F_N$, the value $\langle T, \eta_H\rangle$ of this intersection form turns out to be equal to the co-volume of $H$, that is the volume of the metric graph $T_H/H$, where $T_H\subseteq T$ is the unique minimal $H$-invariant subtree of $T$. However, unlike in the case of geodesic currents, the co-volume form $\cvn\times\gcn\to [0,\infty)$ does not extend to a continuous map $\cvnbar\times\gcn\to [0,\infty)$. 
\end{abstract}

\thanks{The first author was supported by the NSF
  grant DMS-0904200. Both authors acknowledge the support of the Swiss National
  Foundation for Scientific Research}

\subjclass[2000]{Primary 20F, Secondary 57M, 37B, 37D}

\maketitle

\tableofcontents

\section{Introduction}\label{sec:intro}

Geodesic currents were introduced in the context of hyperbolic surfaces by Bonahon in the papers \cite{Bo86,Bo88}  where they were used to study the Teichm\"uller space and mapping class groups, with various applications to 3-manifolds. 
In the context of free groups, geodesic currents were first investigated by Reiner Martin in his 1995 PhD thesis~\cite{Martin}, but have only become the object of systematic study in the last five years, leading to a number of interesting recent applications and developments.

A geodesic current may be thought of as a measure-theoretic analog of the notion of a conjugacy class in the group, or of a free homotopy class of a closed curve in the surface.  More formally, a \emph{geodesic current} on a free group $F_N$ is a positive Borel measure on $\bdsq F_N:=\{(x,y)\in \bd F_N\times \bd F_N : x\ne y\}$ which is locally finite (i.e., finite on compact subsets), $F_N$-invariant and invariant with respect to the \lq\lq flip map\rq\rq\ $\bdsq F_N\to \bdsq F_N$, $(x,y)\mapsto (y,x)$. Equivalently, it is a positive Borel locally finite $F_N$-invariant measure on the space of 2-element subsets of $\partial F_N$.  

This paper is devoted to the study of a natural generalization of the space of geodesic currents obtained by replacing the space of 2-element subsets of $\partial F_N$ by the space $\mathfrak C_N$ of all closed subsets $S\subseteq \partial F_N$ such that $S$ consists of at least two points. The space $\mathfrak C_N$ has a natural topology given by the Hausdorff distance for the subsets of $\partial F_N$, where $\partial F_N$ is endowed with the standard visual metric provided by any choice of basis in $F_N$ (this topology is independent of the choice of the basis and coincides with the Vietoris topology). Similarly to the Cantor set $\bd F_N$, the space $\mathfrak C_N$ is locally compact and totally disconnected. See Subsection~\ref{subsec:subsets} for details about the space $\mathfrak C_N$.
A \emph{subset current} on $F_N$ is a positive locally finite $F_N$-invariant Borel measure on $\mathfrak C_N$, and we consider the space $\gcn$ of all subset currents on $F_N$ equipped with the weak-$*$ topology and with the $\R_{\ge 0}$-linear structure. 

This definition naturally extends to the case where $F_N$ is replaced by an arbitrary word-hyperbolic group, for details see Problem \ref{prob:surface} below. 

The space $\gcn$ of subset currents admits a natural $\Out(F_N)$-action by $\R_{\ge 0}$-linear homeomorphisms, and the space of geodesic currents $\Curr(F_N)$ is canonically embedded in $\gcn$ as a closed $\Out(F_N)$-invariant $\R_{\ge 0}$-linear subspace. We show below that subset currents are  measure-theoretic analogs of conjugacy classes of  nontrivial finitely generated subgroups in $F_N$, with geodesic currents corresponding to cyclic subgroups, and we study this connection in detail. 

The space $\Curr(F_N)$ of all geodesic currents on $F_N$ is a natural counterpart for the Culler-Vogtmann Outer space $\cvn$, and the present paper explores deeper levels of this interaction in a more general context of subset currents.  The Outer space $\cvn$, introduced in~\cite{CV},  is a free group \lq\lq cousin\rq\rq\ of the Teuchm\"uller space and consists of $F_N$-equivariant isometry classes of free minimal discrete isometric actions of $F_N$ on $\R$-trees. Points of $\cvn$ may also be thought of as \emph{marked metric graph structures} on $F_N$, see Section~\ref{sec:background} below for details. The space $\cvn$ comes equipped with a natural $\Out(F_N)$-action that factors through to the action on the projectivized Outer space $\CVN$ (whose points are homothety classes of elements of $\cvn$). 
The closure $\cvnbar$ of $\cvn$ with respect to the equivariant Gromov-Hausdorff convergence topology is an important object in the study of dynamics of $\Out(F_N)$ and is known to consist of $F_N$-equivariant isometry classes of all \emph{very small} minimal isometric actions of $\Out(F_N)$ on $\R$-trees. The projectivization $\CVNbar$ of $\cvnbar$ is a compact finite-dimensional space analog to Thurston's compactification of the Teichm\"uller space. 
Compared to the Teichm\"uller space, the geometry of the Outer space remains much less understood.


In this regard studying the interaction between the Outer space and the space of currents proved to be quite useful. This interaction is primarily given by the \emph{geometric intersection form} or \emph{length} pairing: in~\cite{KL2} Kapovich and Lustig proved that there exists a unique continuous map
\[
\langle \, , \rangle: \cvnbar\times \Curr(F_N)\to \R_{\ge 0}
\] 
which is $\Out(F_N)$-invariant, $\R_{>0}$-homogeneous with respect to the first argument, $\R_{\ge 0}$-linear with respect to the second argument, and has the property that for every nontrivial element $g\in F_N$ and every $T\in\cvnbar$ one has
\[
\langle T, \eta_g\rangle =||g||_T.
\]
Here $||g||_T=\inf_{x\in T} d_T(x,gx)$ is the \emph{translation length} of $g$ with respect to $T$, and $\eta_g\in \Curr(F_N)$ is the \emph{counting current} associated to $g$ (see Subsection \ref{subsec:geodcurr} for the definition).
The set of scalar multiples of all counting currents is a dense subset of $\Curr(F_N)$, which in particular justifies thinking about the notion of a current as generalizing that of a conjugacy class. This approach provided a number of useful recent applications to the study of the dynamics and geometry of $\Out(F_N)$, such as the results of Kapovich and Lustig about various analogs of the curve complex in the free group case~\cite{KL2}; a construction by Bestvina and Feighn, for a given finite collection of iwip elements in $\Out(F_N)$, of a Gromov-hyperbolic graph with an isometric $\Out(F_N)$-action, where these iwip automorphisms act as hyperbolic isometries; a result of Hamenst\"adt~\cite{Ha} about \lq\lq lines of minima\rq\rq\ in Outer space; the work of Clay and Pettet~\cite{CP} on realizability of an arbitrary matrix from $GL(N,\Z)$ as the abelianization action of a hyperbolic iwip element of $\Out(F_N)$,  and others (see, for example, \cite{Ka3,Ka4,KL1,KL2,KL3,KL4,KL5,CHL3,Fra,KN,KN2,CK}).

As noted above, in this paper we extend this framework in a way that allows us to study the dynamics of the action of $\Out(F_N)$ on conjugacy classes of finitely generated subgroups of $\Out(F_N)$ that are not necessarily cyclic. 

In the context of subset currents, we similarly define, given a finitely generated nontrivial subgroup $H\le F_N$, the \emph{counting current} $\eta_H\in\gcn$. For the case where $H=\langle g\rangle\le F_N$ is infinite cyclic, we actually get $\eta_H=\eta_g$. For a finitely generated nontrivial $H\le F_N$,  $\eta_H\in \gcn$ is defined (see Definition~\ref{defn:count}) as a sum of atomic measures on the limit set of $H$ and its conjugates. It is then shown (see Theorem~\ref{thm:cc}), that it can equivalently be understood in terms of the corresponding Stallings core graphs.

If a free basis $A$ is fixed in $F_N$, a convenient basis of topology is formed by cylinder sets. A subset cylinder $\mathcal SCyl_A(K)\subseteq \mathfrak C_N$ (see Definition \ref{defn:gcyl} below) is determined by a finite non-degenerate subtree $K$ of the Cayley tree $X_A$ of $F_N$ with respect to $A$, and it consists of all those $S\in \mathfrak C_N$ such that the convex hull of $S$  in $X_A$ contains $K$ and such that every bi-infinite geodesic in $X_A$ with both endpoints in $S$ which intersects $K$ in a non-degenerate segment, enters and exits $K$ through vertices of degree 1 in $K$.

We develop the appropriate notion of an \emph{occurrence}  of a finite subtree $K\subseteq X_A$ in a Stallings core graph $\Delta$ (see Definition~\ref{defn:occur}). Namely, an occurrence of $K$ in $\Delta$ is a locally injective label-preserving morphism from $K$ into $\Delta$ which is a local homeomorphism at every point of $K$ except for the terminal vertices of leaves of $K$. Apart from being a useful tool in dealing with counting currents, the language of occurrences in core graphs produces an interesting model of non-linear (that is, not based on a segment of $\Z$) words, with a good notion of a subword (or \lq\lq factor\rq\rq ) in such a word. Studying such models is an active subject of research in combinatorics of words (see, for example, \cite{ABFJ}). 

As noted earlier, in the context of $\Curr(F_N)$, a basic fact of the theory states that the set of all \emph{rational currents}, that is of  all $\R_{\ge 0}$-scalar multiples of the counting currents $\eta_g$, where $g\in F_N, g\ne 1$, is a dense subset of $\Curr(F_N)$.  Proofs of this fact are usually relying, in an essential way, on the \lq\lq commutative\rq\rq\  nature of the dynamical systems associated with $\Curr(F_N)$. Thus, given a free basis $A$ of $F_N$, one can naturally view a geodesic current on $F_N$ as a positive shift-invariant Borel finite measure  on the space of bi-infinite freely reduced words over $A^{\pm 1}$. Under this correspondence, the counting currents correspond exactly to the shift-invariant measures supported by periodic orbits of the shift map, and the density of the set of rational currents is then a consequence of classical results about density of periodic orbits.  
In the context of $\gcn$ the situation is considerably more complicated, since the symbolic dynamical  systems corresponding to subset currents are no longer \lq\lq commutative\rq\rq\ in nature. Geometrically, a 2-element subset of $\bd F_N$ determines an infinite unparameterized geodesic line in the Cayley graph $X_A$ of $F_N$ with respect to a free basis $A$, so that a geodesic current may be thought of as an $F_N$-invariant measure on the space of unparameterized geodesic lines in $X_A$.  Similarly, an element $S\in \mathfrak C_N$ determines an infinite subtree  $Y\subseteq X_A$ without degree-one vertices, namely, the convex hull of $S$ in $X_A$.  Thus, once $A$ is chosen, a subset current translates into a positive finite Borel measure on the space $\mathcal T_1(X_A)$ of infinite subtrees of $X_A$ containing $1\in F_N$ and without degree-one vertices. $F_N$-invariance of the current implies that the corresponding measure on  $\mathcal T_1(X_A)$ is invariant with respect to the root change. The space $\mathcal T_1(X_A)$ may be viewed as a \lq\lq branching\rq\rq\ analog of the geodesic flow space for $X_A$, since its elements are infinite trees rather than lines.

Recent results of Bowen~\cite{Bow03,Bow09} and Elek~\cite{Elek} about approximability of such measures on the space of rooted trees are applicable to our set-up and allow us to conclude in Theorem~\ref{thm:dense} that the set $\{r\eta_H| r\ge 0, H\le F_N \text{ is nontrivial and finitely generated}\}$ of \emph{rational subset currents} is dense in $\gcn$.

The notion of a subset current is related to the study of \lq\lq invariant random subgroups\rq\rq .  If $G$ is a locally compact group, an \emph{invariant random subgroup} is a probability measure on the space $S(G)$ of all closed subgroups of $G$, such that this measure is invariant with respect to the conjugation action of $G$ on $S(G)$. 

The study of invariant random subgroups in various contexts goes back to the work of Stuck and Zimmer~\cite{SZ94} and has recently become an active area of research, see for example~\cite{AGV12,Bow10,Bow12,DDMN,D02,Gr11,Sa11,Vershik,Ve11}.  If $G=F_N$ and $A$ is a free basis of $F_N$, one can view an invariant random subgroup on $F_N$ as a measure on the space of all rooted Stallings core graphs labelled by $A$, invariant with respect to root change. Note that the Stallings core graph corresponding to a nontrivial subgroup is never a tree, and, moreover, every edge in this graph is contained in some nontrivial immersed circuit.  By contrast, as noted above and as we explain in greater detail in Subsection~\ref{subsec:root} below, a subset current on $F_N$ can be viewed as a measure on the space $\mathcal T_1(X_A)$ of infinite trees which is invariant with respect to root change. We plan to investigate deeper connections between these two notions in a future work.

In this paper we also construct (see Section~\ref{sec:intform}) a continuous $\Out(F_N)$-invariant \emph{co-volume form} 
\[
\langle\, ,\, \rangle: \cvn\times\gcn\to \R_{\ge 0}.
\]
It has properties similar to that of the geometric intersection form on geodesic currents, except that instead of translation length it computes the \emph{co-volume}: for any $T\in\cvn$ and a nontrivial finitely generated subgroup $H\le F_N$, we have $\langle T, \eta_H\rangle= \vol(H\setminus T_H)$ where $T_H$ is the convex hull of the limit set of $H$, and is also the unique minimal $H$-invariant subtree of $T$. For an infinite cyclic $H=\langle g\rangle\le F_N$ we have $||g||_T=\vol(H\setminus T_H)$ since in this case the quotient graph $H\setminus T_H$ is  a circle. By contrast to the result of Kapovich and Lustig~\cite{KL2} for ordinary geodesic currents, we prove (Theorem~\ref{thm:disc} below) that for $N\ge 3$ the co-volume form $\cvn\times\gcn\to \R_{\ge 0}$ does not extend to a continuous map $\cvnbar\times\gcn\to \R_{\ge 0}$.

For a finitely generated subgroup $H\le F_N$, the \emph{reduced rank}, $\rrk(H)$, is defined as $\max\{\rk(H)-1,0\}$, where $\rk(H)$ is the cardinality of a free basis of $H$.
It turns out that reduced rank uniquely extends to a continuous $\Out(F_N)$-invariant $\R_{\ge 0}$-linear functional $\rrk: \gcn\to \R_{\ge 0}$, such that for every nontrivial finitely generated $H\le F_N$ we have $\rrk(\eta_H)=\rrk(H)$. As we note in Section~\ref{sec:problems}, there are likely deeper connections arising here with the study of intersections of finitely generated subgroups of free groups.


More open problems regarding subset currents are formulated in  Section~\ref{sec:problems}. We believe that subset currents exhibit considerably more interesting and varied geometric and dynamical behavior than geodesic currents, and can provide new and interesting information about $\Out(F_N)$. 
Indeed, we hope this paper to serve as a starting point for investigating the space of subset currents in its different aspects, such as  
$\Out(F_N)$-related questions, connections with invariant random subgroups, connections to the study of ergodic properties of subgroup actions on $\partial F_N$ and their Schreier graphs (see~\cite{GKN}), etc.

We are very grateful to Lewis Bowen for illuminating and  helpful conversations and to the referee for useful comments.
The first author also thanks Patrick Reynolds for pointing out that for a fixed finitely generated subgroup $H\le F_N$, the co-volume function $\cvnbar\to \R_{\ge 0}, T\mapsto ||H||_T$, is not continuous on $\cvnbar$. 

\section{Outer space and the space of geodesic currents}\label{sec:background}

We give here only a brief overview of basic facts related to Outer space and the space of geodesic currents. We refer the reader to~\cite{CV,Ka2} for more detailed background information.

\subsection{Conventions regarding graphs}\label{subsec:conv}

A \emph{graph} is a 1-complex. The set of $0$-cells of a graph
$\Delta$ is denoted $V\Delta$ and its elements are called
\emph{vertices} of $\Delta$. The closed 1-cells of a graph $\Delta$ are
called \emph{topological edges} of $\Delta$. The set of all
topological edges is denoted $E_{top}\Delta$.  We will sometimes call the
open 1-cells of $\Delta$ \emph{open edges} of $\Delta$. The interior of every topological edge is
homeomorphic to the interval $(0,1)\subseteq \mathbb R$ and thus
admits exactly 2 orientations (when considered as a 1-manifold). We
call a topological edge endowed with the choice of an orientation on
its interior an \emph{oriented edge} of $\Delta$. The set of all oriented edges
of $\Delta$ is denoted $E\Delta$. For an oriented edge $e\in E\Delta$
changing its orientation to the opposite produces another oriented
edge of $\Delta$ denoted $e^{-1}$ and called the \emph{inverse} of
$e$. Thus ${}^{-1}:E\Delta\to E\Delta$
is a fixed-point-free involution. 
For every oriented edge $e$ of $\Delta$ there are naturally defined
(and not necessarily distinct)
vertices $o(e)\in V\Delta$, called the \emph{origin} of $e$, and
$t(e)\in V\Delta$, called the \emph{terminus} of $e$, satisfying
$o(e^{-1})=t(e)$, $t(e^{-1})=o(e)$. An \emph{orientation} on a graph $\Delta$ is a partition $E\Delta=E^+\Delta\sqcup E^-\Delta$, where for every $e\in E\Delta$ one
of the edges $e, e^{-1}$ belongs to $E^+\Delta$ and the other edge belongs to  $E^-\Delta$.

An \emph{edge-path} $\gamma$ of \emph{simplicial length} $|\gamma|=n\ge 1$
in $\Delta$ is a sequence of oriented edges 
\[
\gamma=e_1,\dots, e_n
\]
such that $t(e_i)=o(e_{i+1})$ for $i=1,\dots, n-1$. We say that
$o(\gamma):=o(e_1)$ is the \emph{origin} of $\gamma$ and that
$t(\gamma)=t(e_n)$ is the \emph{terminus} of $\gamma$. 
An edge-path is
called \emph{reduced} if it does not contain a back-tracking, that is
a sub-path of the form $ee^{-1}$, where $e\in E\Delta$.

To every edge-path $\gamma=e_1,\dots, e_n$ in $\Delta$ there is a naturally
associated continuous map $\widehat\gamma:[0,n]\to \Delta$ with
$\widehat\gamma(0)=o(\gamma)$ and $\widehat\gamma(n)=t(\gamma)$.

A \emph{graph morphism} $f : \Delta \to \Delta'$ is a continuous map from a
graph $\Delta$ to a graph $\Delta'$ that maps vertices of $\Delta$ to
vertices of $\Delta'$ and such that each open edge of $\Delta$ is
mapped homeomorphically to an open edge of $\Delta'$.  Thus $f$
induces natural maps $f: E\Delta\to E\Delta'$ and $f:V\Delta\to
V\Delta'$ such that $f(e^{-1})=\left(f(e))\right)^{-1}$,
$o(f(e))=f(o(e))$ and $t(f(e))=f(t(e))$ for every $e\in E\Delta$.

\subsection{Markings}\label{subsec:mark}

Let $N\ge 2$. We fix a free basis $A=\{a_1,\dots, a_N\}$ of $F_N$. We
also let $R_N$ to denote the wedge of $N$ loop-edges at a vertex $x_0$. We
identify $F_N$ with $\pi_1(R_N, x_0)$ by mapping each $a_i\in B$ to
one of the loop edges in $R_N$. We fix this identification
$F_N=\pi_1(R_N,x_0)$ for the remainder of the paper. The graph $R_N$
will be referred to as the \emph{standard $N$-rose}.

A \emph{marking} on $F_N$ is an
isomorphism $\alpha:F_N\isom\pi_1(\Gamma)$, where $\Gamma$ is a finite
connected graph without degree-1 vertices.

To each marking $\alpha$ we also associate a continuous map
$\widehat\alpha: R_N\to \Gamma$ such that $\widehat \alpha$ is a
homotopy equivalence and such that $\widehat\alpha_\#=\alpha$.
Two markings
$\alpha_1:F_N\isom\pi_1(\Gamma_1)$ and $\alpha_2:F_N\isom\pi_1(\Gamma_2)$
are \emph{equivalent} if there exists a graph isomorphism
$j:\Gamma_1\to\Gamma_2$ such that for the associated continuous maps
 $\widehat\alpha_{1}:R_N\to\Gamma_1$ and $\widehat\alpha_{2}:R_N\to \Gamma_2$,
the maps $j\circ \widehat\alpha_{1}$ and
$\widehat\alpha_{2}$ are freely homotopic. 
We will usually only be interested in the equivalence class of a
marking, and for that reason an explicit mention of a base-point in the graph
$\Gamma$ in the definition of a marking will almost always be omitted.

If $\alpha:F_N\isom\pi_1(\Gamma)$ is a marking, then $\alpha$ defines an
$F_N$-equivariant quasi-isometry between $F_N$ and $\widetilde\Gamma$
(endowed with the simplicial metric, giving every edge of
$\widetilde\Gamma$ length $1$). This quasi-isometry induces an
$F_N$-equivariant homeomorphism $\partial F_N\to\partial
\widetilde\Gamma$. When talking about markings, we will usually implicitly assume that $\partial
F_N$ is identified with $\partial\widetilde\Gamma$ via this homeomorphism
and write $\partial F_N=\partial\widetilde\Gamma$.

Each marking $\alpha:F_N\isom\pi_1(\Gamma)$ provides a Hausdorff metric $d_{\alpha}$ on $\partial F_N$ as follows.  For $\xi,\zeta\in \partial
F_N$, put  $d_\alpha(\xi,\zeta)=\frac{1}{2^M}$ where $M$ is the length
of the maximal common initial segment of the geodesic rays $[x_0,\xi)$ and $[x_0,\zeta)$ in $\widetilde \Gamma$.

A \emph{metric graph structure} on a graph $\Delta$ is a function
$\mathcal L:E\Delta\to\ (0,\infty)$ such that $\mathcal L(e)=\mathcal
L(e^{-1})$ for every $e\in E\Delta$. 
Equivalently, we may think of a metric graph structure on $\Delta$ as
a function $\mathcal L: E_{top}\Delta\to (0,\infty)$. 
For an edge-path
$\gamma=e_1,\dots, e_n$ in $\Delta$ the \emph{$\mathcal L$-length} of
$\gamma$ is $\mathcal L(\gamma):=\sum_{i=1}^n \mathcal L(e)$.

A \emph{marked metric graph structure} on $F_N$ is a pair $(\alpha,
\mathcal L)$ where  $\alpha:F_N\isom\pi_1(\Gamma)$ is a marking on
$F_N$ and $\mathcal L$ is a metric graph structure on $\Gamma$.
Two marked metric graph structures $(\alpha_1:F_N\isom\pi_1(\Gamma_1),
\mathcal L_1)$ and $(\alpha_2:F_N\isom\pi_1(\Gamma_2), \mathcal L_2)$
are \emph{equivalent} if there exists a graph isomorphism
$j:\Gamma_1\to\Gamma_2$ such that $j: (\Gamma_1, \mathcal L_1)\to
(\Gamma_2, \mathcal L_2)$ is an isometry and such that for the associated continuous maps
 $\widehat\alpha_{1}:R_N\to\Gamma_1$ and $\widehat\alpha_{2}:R_N\to \Gamma_2$,
the maps $j\circ \widehat\alpha_{1}$ and
$\widehat\alpha_{2}$ are freely homotopic. 
Note that if  $(\alpha_1:F_N\isom\pi_1(\Gamma_1),
\mathcal L_1)$ and $(\alpha_2:F_N\isom\pi_1(\Gamma_2), \mathcal L_2)$
are equivalent marked metric graph structures on $F_N$ then the
markings $\alpha_1,\alpha_2$ are equivalent.

\subsection{Outer space}\label{subsec:outer}


Let $N\ge 2$. The \emph{Outer space} $\cvn$ consists of all minimal free and discrete isometric
actions on $F_N$ on $\mathbb R$-trees (where two such actions are
considered equal if there exists an $F_N$-equivariant isometry between
the corresponding trees).
There are several different topologies on
$\cvn$ that are known to coincide, in particular the equivariant
Gromov-Hausdorff convergence topology and the so-called \emph{length function} topology. 

Every $T\in \cvn$ is uniquely
determined by its \emph{translation length function}
$||.||_T:F_N\to\mathbb R$, where $||g||_T$ is the translation length
of $g$ on $T$. Two trees $T_1,T_2\in \cvn$ are close if the functions
$||.||_{T_1}$ and $||.||_{T_1}$ are close point-wise on a large ball in
$F_N$. The closure $\cvnbar$ of $\cvn$ in either of these two
topologies is well-understood and known to consist precisely of all
the so-called \emph{very small} minimal isometric actions of $F_N$ on
$\mathbb R$-trees, see \cite{BF93} and \cite{CL}.

The automorphism group $\Aut(F_N)$ has a natural continuous \emph{right} action
on $\cvnbar$ (that leaves $\cvn$ invariant) defined as follows: for $T\in \cvn$ and $\phi\in \Aut(F_N)$
we have $||g||_{T\phi}=||\phi(g)||_T$, where $g\in F_N$. In terms of
tree actions, $T\phi$ is equal to $T$ as a metric space, but the
action of $F_N$ is modified as: $g\underset{T\phi}{\cdot}
x=\phi(g)\underset{T}{\cdot} x$ where $x\in T$, $g\in F_N$. It is not
hard to see that the subgroup $\Inn(F_N)\le \Aut(F_N)$ of  inner
automorphisms is contained in the kernel of the action of $\Aut(F_N)$
on $\cvnbar$. Hence this action quotients through to the action of
$\Out(F_N)$ on $\cvnbar$, where $\cvn\subseteq\cvnbar$ is an
$\Out(F_N)$-invariant dense subset.  The right action of $\Out(F_N)$ on $\cvnbar$ can be converted into a left action as follows: for $T\in \cvnbar$ and $\phi\in \Out(F_N)$ put $\phi T:=T\phi^{-1}$.

The
\emph{projectivized Outer space} $\CVN=\mathbb P\cvn$ is defined
as the quotient $\cvn/\sim$ where for $T_1\sim T_2$ whenever
$T_2=cT_1$ in $\cvn$ for some $c>0$. One similarly defines the projectivization
$\CVNbar=\mathbb P\cvnbar$ of $\cvnbar$ as
$\cvnbar/\sim$ where $\sim$ is the same as above. The space
$\CVNbar$ is compact and contains $\CVN$ as a dense
$\Out(F_N)$-invariant subset. The compactification $\CVNbar$ of $\CVN$
is a free group analog of the Thurston compactification of the
Teichm\"uller space. For $T\in \cvnbar$ its $\sim$-equivalence class
is denoted by $[T]$, so that $[T]$ is the image of $T$ in $\CVNbar$.

Every marked metric graph structure $(\alpha:F_N\isom \pi_1(\Gamma),
\mathcal L)$ defines a point in $\cvn$ as follows. Consider the universal covering tree
$X=\widetilde\Gamma$ and let $d_\mathcal L$ be the metric on $X$
obtained by giving every edge of $\widetilde\Gamma$ the same length as the $\mathcal L$-length of
its projection in $\Gamma$. Then $X$ is an $\mathbb R$-tree, and the
action of $F_N$ on $X$ via $\alpha$ by covering
transformations is a free minimal discrete isometric action on
$(X,d_\mathcal L)$. Thus $(X,d_\mathcal L)$, equipped with this action of $F_N$, is a point of $\cvn$.

It is well-known that for two marked metric graph structures $(\alpha_1:F_N\isom \pi_1(\Gamma_1),
\mathcal L_1)$ and $(\alpha_2:F_N\isom \pi_1(\Gamma_2),
\mathcal L_2)$ we have $(\widetilde\Gamma_1,d_{\mathcal
  L_1})_{\alpha_1}=(\widetilde\Gamma_2,d_{\mathcal L_2})_{\alpha_2}$
in $\cvn$ if and only if $(\alpha_1,\mathcal L_1)$ is equivalent to
$(\alpha_2,\mathcal L_2)$.
Moreover, every point of $\cvn$ comes from some marked metric graph
structure on $F_N$. Namely, if $T\in \cvn$, take $\Gamma=F_N\setminus
T$ and endow the edges of $\Gamma$ with the same lengths as their
lifts in $T$.  Since the action of $F_N$ on $T$ is free and discrete,
there is a natural identification $F_N$ with $\pi_1(\Gamma)$, giving
us a marking $\alpha:F_N\isom \pi_1(\Gamma)$ on $F_N$.  This yields a
marked metric graph structure $(\alpha:F_N\isom \pi_1(\Gamma),
\mathcal L)$ such that $T= (\widetilde\Gamma,d_{\mathcal
  L})_{\alpha}$ in $\cvn$.



\subsection{Geodesic currents}\label{subsec:geodcurr}

Let $\partial^2 F_N:=\{ (x,y)| x,y\in \partial F_N, x\ne y\}$. The action of $F_N$ by translations on its hyperbolic boundary $\partial F_N$ defines a natural diagonal action of $F_N$ on $\partial^2 F_N$. A \emph{geodesic current} on $F_N$ is a positive Borel measure on $\partial^2 F_N$, which is locally finite (that is finite on all compact subsets),  $F_N$-invariant and is also invariant under the \lq\lq flip\rq\rq\ map $\partial^2 F_N\to \partial^2 F_N$, $(x,y)\mapsto (y,x)$. 


The space $\Curr(F_N)$ of all geodesic currents on $F_N$ has a natural $\mathbb R_{\ge 0}$-linear structure and is equipped with the weak-* topology of point-wise convergence on continuous functions. Any choice of a marking on $F_N$ allows one to think about geodesic currents as systems of nonnegative weights satisfying certain Kirchhoff-type equations; see \cite{Ka2} for details. We briefly recall this construction for the case where $X_A\in \cvn$ is the Cayley tree corresponding to a free basis $A$ of $F_N$. For a non-degenerate geodesic segment $\gamma=[p,q]$ in $X_A$ the \emph{two-sided cylinder} $Cyl_A(\gamma)\subseteq \partial^2 F_N$ consists of all $(x,y)\in \partial^2 F_N$ such that the geodesic from $x$ to $y$ in $X_A$ passes through $\gamma=[p,q]$. Given a nontrivial freely reduced  word $v\in F(A)=F_N$ and a current $\mu\in \Curr(F_N)$, the \lq\lq weight\rq\rq\  $(v;\mu)_A$ is defined as $\mu(Cyl_A(\gamma))$ where $\gamma$ is any segment in the Cayley graph $X_A$ labelled by $v$ (the fact that the measure $\mu$ is $F_N$-invariant implies that a particular choice of $\gamma$ does not matter). A current $\mu$ is uniquely determined by a family of weights $\big((v;\mu)_A\big)_{v\in F_N-\{1\}}$. The weak-* topology on $\Curr(F_N)$ corresponds to point-wise convergence of the weights for every $v\in F_N, v\ne 1$.

There is a natural left action of $\Out(F_N)$ on $\Curr(F_N)$ by continuous linear transformations. Specifically, let $\mu\in Curr(F_N)$, $\phi\in \Out(F_N)$ and let $\Phi\in Aut(F_N)$ be a representative of $\phi$ in $Aut(F_N)$. Since $\Phi$ is a quasi-isometry of $F_N$, it extends to a homeomorphism of $\partial F_N$ and, diagonally, defines a homeomorphism of $\partial^2 F_N$. The measure $\phi\mu$ on $\partial^2 F_N$ is defined as follows. For a Borel subset $S\subseteq \partial^2 F_N$ we have $(\phi\mu)(S):=\mu(\Phi^{-1}(S))$. One then checks that $\phi\mu$ is a current and that it does not depend on the choice of a representative $\Phi$ of $\phi$. 

The \emph{space of projectivized geodesic currents} is defined as $\mathbb P\Curr(F_N)=\Curr(F_N)-\{0\}/\sim$ where $\mu_1\sim\mu_2$ whenever there exists $c>0$ such that $\mu_2=c\mu_1$. The $\sim$-equivalence class of $\mu\in \Curr(F_N)-\{0\}$ is denoted by $[\mu]$.  The action of $\Out(F_N)$ on $\Curr(F_N)$ descends to a continuous action of $\Out(F_N)$ on $\mathbb P\Curr(F_N)$. The space $\mathbb PCurr(F_N)$ is compact.

For every $g\in F_N, g\ne 1$ there is an associated \emph{counting current} $\eta_g\in Curr(F_N)$. If $A$ is a free basis of $F_N$ and the conjugacy class $[g]$ of $g$ is realized by a \lq\lq cyclic word\rq\rq\ $W$ (that is a cyclically reduced word in $F(A)$ written on a circle with no specified base-vertex), then for every nontrivial freely reduced word $v\in F(A)=F_N$ the weight $( v;\eta_g)_A$ is equal to the total number of occurrences of $v^{\pm 1}$ in $W$ (where an occurrence of $v$ in $W$ is a vertex on $W$ such that we can read $v$ in $W$ clockwise without going off the circle). We refer the reader to \cite{Ka2} for a detailed exposition on the topic. By construction, the counting current $\eta_g$ depends only on the conjugacy class $[g]$ of $g$ and it also satisfies $\eta_g=\eta_{g^{-1}}$. One can check~\cite{Ka2} that for $\phi\in \Out(F_N)$ and $g\in F_N, g\ne 1$ we have $\phi\eta_g=\eta_{\phi(g)}$. Scalar multiples $c\eta_g\in \Curr(F_N)$, where $c\ge 0$, $g\in F_N, g\ne 1$ are called \emph{rational currents}. A key fact about $\Curr(F_N)$ states that the set of all rational currents is dense in $\Curr(F_N)$. The set $\{[\eta_g\: g\in F_N, g\ne 1\}$ is dense in $\mathbb P\Curr(F_N)$.

\subsection{Intersection form}\label{subsec:intersection}

In \cite{KL2} Kapovich and Lustig  constructed a natural geometric \emph{intersection form} that pairs trees and currents:

\begin{prop}\label{prop:int}\cite{KL2}
Let $N\ge 2$. There exists a unique continuous map $\langle , \rangle : \cvnbar \times \Curr(F_N)\to \mathbb R_{\ge 0}$ with the following properties:
\begin{enumerate}
\item We have $\langle T, c_1\mu_1+c_2\mu_2\rangle=c_1\langle T,\mu_1\rangle+c_2\langle T,\mu_2\rangle$ for any $T\in \cvnbar$, $\mu_1,\mu_2\in \Curr(F_N)$, $c_1,c_2\ge 0$.
\item We have $\langle cT, \mu\rangle=c\langle T,\mu\rangle$ for any
  $T\in \cvnbar$, $\mu\in \Curr(F_N)$ and $c\ge 0$.
\item We have $\langle T\phi,\mu\rangle=\langle T, \phi\mu\rangle$ for
  any $T\in \cvnbar$, $\mu\in \Curr(F_N)$ and $\phi\in \Out(F_N)$.
\item We have $\langle T, \eta_g\rangle=||g||_T$ for any $T\in \cvnbar$ and $g\in F_N, g\ne 1$.
\end{enumerate}
\end{prop}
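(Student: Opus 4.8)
\emph{Plan and uniqueness.} The plan is to prove uniqueness first (which is soft), then to construct the pairing over $\cvn$, verify the four properties there, and finally --- the main work --- extend it continuously across the boundary $\cvnbar\setminus\cvn$. For uniqueness: the rational currents $\{c\eta_g : c\ge 0,\ g\in\FN\setminus\{1\}\}$ are dense in $\Curr(\FN)$ and $\cvn$ is dense in $\cvnbar$, so properties (1), (2), (4) force $\langle T,c\eta_g\rangle=c\|g\|_T$ and thereby determine $\langle\,,\,\rangle$ on a dense subset of $\cvnbar\times\Curr(\FN)$; by continuity there is at most one such map.

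\emph{The pairing over $\cvn$.} Given $T\in\cvn$, realize $T$ as the universal cover of a finite marked metric graph $\Gamma=\FN\backslash T$ with length function $\mathcal L_T$. For each topological edge $e$ of $\Gamma$ fix a lift $\tilde e\subseteq T$ and let $\mathrm{Cyl}_T(\tilde e)\subseteq\partial^2\FN$ be the compact-open, flip-invariant set of pairs $(x,y)$ whose bi-infinite $T$-geodesic crosses $\tilde e$; by $\FN$-invariance of $\mu$ the number $\mu(\mathrm{Cyl}_T(\tilde e))$ is independent of the lift. I would set
\[
\langle T,\mu\rangle\ :=\ \sum_{e}\mathcal L_T(e)\,\mu\big(\mathrm{Cyl}_T(\tilde e)\big),
\]
a finite sum. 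Property (1) is immediate from linearity of $\mu\mapsto\mu(\mathrm{Cyl}_T(\tilde e))$ and (2) from $\mathcal L_{cT}=c\mathcal L_T$. For (4), note that the $\cvn$-action is free, so $g$ is $T$-hyperbolic, and $\eta_g(\mathrm{Cyl}_T(\tilde e))$ is the number of times the immersed loop in $\Gamma$ representing $[g]$ crosses $e$; hence the weighted sum is the $\mathcal L_T$-length of that loop, namely $\|g\|_T$. Property (3) follows from (1), (2), (4), the identities $\phi\eta_g=\eta_{\phi(g)}$ and $\|g\|_{T\phi}=\|\phi(g)\|_T$, and density of rational currents, once continuity in the current variable is available.

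\emph{Continuity over $\cvn$.} I would cover $\cvn$ by the relatively open simplices of marked metric graph structures with a fixed marking. On each such simplex the sets $\mathrm{Cyl}(\tilde e)$ are fixed clopen subsets of $\partial^2\FN$, so $\mu\mapsto\mu(\mathrm{Cyl}(\tilde e))$ is weak-$*$ continuous and the formula is a finite sum of products of these numbers with the (continuously varying) edge-lengths --- jointly continuous in $(\mathcal L,\mu)$. When an edge degenerates and one passes to an adjacent simplex, the term on the collapsing edge tends to $0$ (its cylinder-measure is constant on the simplex while the length tends to $0$) and the surviving terms match the formula on the smaller graph; since $\cvn$ is, up to subdivision, a locally finite complex built from finitely many simplex types, this yields joint continuity on all of $\cvn\times\Curr(\FN)$.

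\emph{Extension to $\cvnbar$ --- the main obstacle.} As $\CVNbar$ is compact metrizable and $\Curr(\FN)$ metrizable, $\cvnbar\times\Curr(\FN)$ is metrizable with $\cvn\times\Curr(\FN)$ dense, so it is enough to show that whenever $(T_n,\mu_n)\to(T,\mu)$ with $T_n\in\cvn$ the numbers $\langle T_n,\mu_n\rangle$ form a bounded Cauchy sequence: the limit is then forced to depend only on $(T,\mu)$, defines a continuous map, and (1)--(4) pass to the limit (for (4) via $\langle T_n,\eta_g\rangle=\|g\|_{T_n}\to\|g\|_T$). Boundedness I would get from the Lipschitz geometry of Outer space: if $L_n$ is the minimal Lipschitz constant of an $\FN$-equivariant map $X_A\to T_n$, then $\langle T_n,\eta_g\rangle=\|g\|_{T_n}\le L_n\|g\|_{X_A}=L_n\langle X_A,\eta_g\rangle$ for every $g$, whence (by linearity, continuity over $\cvn$, and density of rational currents) $\langle T_n,\mu_n\rangle\le L_n\langle X_A,\mu_n\rangle$; and by White's theorem $L_n=\sup_{w\ne1}\|w\|_{T_n}/\|w\|_{X_A}$ is attained on a finite, $n$-independent set of candidate conjugacy classes, so $L_n\to\|T\|_A<\infty$ and stays bounded, while $\langle X_A,\mu_n\rangle\to\langle X_A,\mu\rangle$. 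The genuinely hard part --- and, I expect, the technical heart of the statement, as in~\cite{KL2} --- is the Cauchy estimate: one must bound $|\langle T_n,\mu\rangle-\langle T_m,\mu\rangle|$ for large $n,m$, and this cannot be done term by term, since although the number of edges of $\FN\backslash T_n$ stays bounded after un-subdividing, the cylinder sets $\mathrm{Cyl}_{T_n}(\tilde e)$ spread out more and more in $\partial^2\FN$ as $T_n$ approaches a very small limit tree, where the comparison map $X_A\to T_n$ has no lower Lipschitz bound. The route I would take is to approximate very small trees in $\cvnbar$ by trees in $\cvn$ with uniformly controlled combinatorics and to invoke bounded back-tracking ($\BBT$) estimates for equivariant maps into $\cvnbar$-trees, so that the crossing data governing $\langle T_n,\mu\rangle$ is, up to an error that is uniformly small in $n$, a function only of the weights $(v;\mu)_A$ for $v$ of bounded length and of the translation lengths $\|w\|_{T_n}$ for $w$ of bounded length --- quantities that converge with $n$. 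Making every error term in this comparison genuinely uniform in both $n$ and $m$ is the crux of the proof.
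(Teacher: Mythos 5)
The paper itself gives no proof of this statement: Proposition~\ref{prop:int} is quoted from~\cite{KL2}, so there is nothing internal to compare against except the closely related construction in Section~\ref{sec:intform}. Measured against that, the parts of your proposal that you actually carry out are correct and standard: the uniqueness argument from density of rational currents in $\Curr(\FN)$ and of $\cvn$ in $\cvnbar$ is fine, and your formula $\langle T,\mu\rangle=\sum_e\mathcal L_T(e)\,\mu(Cyl_T(\tilde e))$ over $\cvn$, together with the verification of (1), (2), (4) and the continuity argument on simplices of marked metric graphs, is essentially the same construction the paper uses for the co-volume form in Proposition-Definition~\ref{defn:intform} (specialized to ordinary currents; cf.\ also the reference there to Proposition~5.9 of \cite{Ka2}).

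The genuine gap is the extension across $\cvnbar\setminus\cvn$, which is not an ``obstacle'' to be flagged but the entire content of the theorem beyond the elementary part --- and your text stops exactly where the proof would have to begin. You correctly reduce to showing that $\langle T_n,\mu_n\rangle$ is Cauchy when $(T_n,\mu_n)\to(T,\mu)$ with $T_n\in\cvn$, and the boundedness step via White's theorem is sound, but the Cauchy estimate itself is only described as a ``route'': no $\BBT$ inequality is stated, no comparison map is constructed, and no uniform error bound is formulated, let alone proved. This cannot be treated as routine. Indeed, the present paper proves (Theorem~\ref{thm:disc}) that the formally identical pairing on $\gcn$ --- built from the very same cylinder-measure formula over $\cvn$ --- does \emph{not} extend continuously to $\cvnbar\times\gcn$; the obstruction found there (sequences $T_n\to T_\ast$ along which the co-volume of a fixed subgroup tends to different limits) shows that continuity at the boundary is a delicate property of the specific pairing with $2$-element subsets, not a soft consequence of density and boundedness. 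Until the uniform estimates replacing the ``crux'' paragraph are supplied, the proposal establishes the proposition only over $\cvn\times\Curr(\FN)$, not over $\cvnbar\times\Curr(\FN)$.
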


\section{The space of subset currents}\label{sec:subsetcurr}

\subsection{The space $\mathfrak C_N$}\label{subsec:subsets}

Recall that for a Hausdorff topological space $Y$,  the so-called \emph{hyper space} $\mathcal H(Y)$ consists of all non-empty closed subsets of $Y$. The space $\mathcal H(Y)$ comes equipped with the \emph{Vietoris topology}, which has the basis consisting of all sets of the form

$$\langle U_1,...,U_n\rangle = \{B\in \mathcal H(Y) | B\subset U_1\cup ... \cup U_n ; B\cap U_i \neq \emptyset , i=1,...,n\} ,$$

where $\{U_1,...,U_n\}$ is a family of open subsets in $Y$.

If $Y$ is a compact metrizable space, then the Vietoris topology  coincides with the Hausdorff topology given by the Hausdorff distance between closed subsets of $Y$, and in this case $\mathcal H(Y)$ is also compact (see e.g. \cite{Encycl}, Chapter b-6). If $Y$ is totally disconnected then $\mathcal H(Y)$ is also totally disconnected.  

\begin{defn}[Space of closed subsets of the boundary]\label{defn:SN}
Let $N\ge 3$. We denote by $\mathfrak C_N$ the set of all closed subsets $S\subseteq \partial F_N$ such that $\#S\ge 2$.
Thus $\mathfrak C_N\subseteq \mathcal H(\bd F_N)$ and we endow $\mathfrak C_N$ with the subspace topology inherited from the Vietoris topology on $\mathcal H(\bd F_N)$. 
\end{defn}

In view of the above remarks,  the topology on $\mathfrak C_N$ coincides with the Hausdorff topology given by the Hausdorff distance $D_\alpha$ on $\mathfrak C_N$ with respect to the metric $d_\alpha$ on $\partial F_N$ corresponding to any marking $\alpha$ of $F_N$ (see \ref{subsec:mark}). 
It is also not difficult to check directly that the topology on $\mathfrak C_N$ defined by the metric $D_\alpha$ does not depend on
the choice of the marking.

The definition also straightforwardly implies that $\mathfrak C_N$ is locally compact and totally disconnected.

The condition that $\#S\ge 2$ is an important non-degeneracy condition for setting up the notion of a subset current, analogous to the assumption that $\xi\ne \zeta$ for $(\xi,\zeta)\in \partial^2 F_N$ in the definition of a geodesic current.

The space $\mathfrak C_N$ that interests us in this paper, is the complement in $\mathcal H (\partial F_N)$ of the closed subspace consisting of $1$-element subsets $\{y\}$, $y\in\partial F_N.$ 


It will be useful in our study of the space $\gcn$ to understand the topology on $\mathfrak C_N$  more explicitly. For this reason we describe the construction of \lq\lq cylindrical\rq\rq\ subsets in $\mathfrak C_N$. 

\begin{defn}[Cylinders]\label{defn:gcyl} 
Let $\alpha:F_N\to \pi_1(\Gamma)$ be a marking on $F_N$, and let $X=\widetilde \Gamma$ be the universal cover of $\Gamma$. 

We give each edge of $\Gamma$ and of $X$ length 1, so that $X$ can also be considered an element of $\cvn$.

Let $K\subset X$ be a non-degenerate finite simplicial subtree of $X$, that is a finite simplicial subtree with at least two distinct vertices of degree 1. Let $e_1,\dots, e_n$ be all the terminal edges of $K$, that is oriented edges whose terminal vertices are precisely all the vertices of $K$ of degree 1. (Note that $n\ge 2$ by the assumption on $K$.) For each of the edges $e_i$ denote by $Cyl_X(e_i)\subseteq \partial F_N$ the homeomorphic image of the subset in $\partial X$ consisting of all equivalence classes of geodesic rays in $X$ beginning with the edge $e_i$.

Define the {\it subset cylinder} $\mathcal SCyl_\alpha(K)$ to be the set $\langle Cyl_X(e_1),...,Cyl_X(e_n)\rangle \subset \mathfrak C_N$. 
Thus for a closed subset $S\subseteq \bd F_N$ with $\#S\ge 2$ we have $S\in \mathcal SCyl_\alpha(K)$ if and only if the following hold:
\begin{enumerate}
\item The subset $S\subseteq \partial F_N$ is closed.
\item We have \[S\subseteq \cup_{i=1}^n Cyl_X(e_i).\]
\item For each $i=1,\dots, n$ we have \[S\cap Cyl_X(e_i)\ne \emptyset.\]
\end{enumerate}
\end{defn}

The following key basic fact is a straightforward exercise in unpacking the definitions:

\begin{prop}\label{prop:topSN}
Let $\alpha$, $\Gamma$ and $X$ be as in Definition~\ref{defn:gcyl}.
Then
\begin{enumerate}
\item For every non-degenerate finite subtree $K\subseteq X$ the subset $\mathcal SCyl_\alpha(K)\subset \gcn$ is compact and open.
\item The collection of all $\mathcal SCyl_\alpha(K)$, where $K$ varies over all non-degenerate finite subtrees of $X$, forms a basis for the topology on $\mathfrak C_N$ given in Definition~\ref{defn:SN}.
\end{enumerate}
\end{prop}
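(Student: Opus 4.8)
The plan is to prove both statements of Proposition~\ref{prop:topSN} by carefully unwinding the definition of the Vietoris topology in terms of the cylinder sets $Cyl_X(e)\subseteq\partial F_N$ attached to oriented edges $e$ of $X$, and by exploiting the ultrametric structure that the marking $\alpha$ imposes on $\partial F_N = \partial X$.

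\medskip

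\textbf{Step 1: Basic properties of the edge cylinders.} First I would record that for an oriented edge $e$ of $X$, the set $Cyl_X(e)\subseteq\partial F_N$ is compact and open: it is the set of boundary points of geodesic rays from $o(e)$ whose first edge is $e$, equivalently a closed (hence compact, since $\partial F_N$ is compact) ball in the metric $d_\alpha$, and it is open because its complement is a finite union of such balls. Moreover $\partial F_N$ is covered by the finitely many $Cyl_X(e)$ with $o(e)$ a fixed vertex $x$, and for two edges $e, e'$ with $o(e)=o(e')$ the cylinders $Cyl_X(e), Cyl_X(e')$ are either equal or disjoint. These are the totally-disconnectedness and ultrametric features that make everything work. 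I would also note the nesting behavior: if $K\subseteq K'$ are non-degenerate finite subtrees, then each terminal edge cylinder of $K'$ is contained in a unique terminal edge cylinder of $K$, and $\mathcal SCyl_\alpha(K')\subseteq\mathcal SCyl_\alpha(K)$.

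\medskip

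\textbf{Step 2: Compactness and openness of $\mathcal SCyl_\alpha(K)$.} By definition $\mathcal SCyl_\alpha(K) = \langle Cyl_X(e_1),\dots,Cyl_X(e_n)\rangle\cap\mathfrak C_N$, a basic open set of the Vietoris topology intersected with $\mathfrak C_N$, so openness in $\mathfrak C_N$ is immediate. For compactness, since $\partial F_N$ is a compact metrizable totally disconnected space, $\mathcal H(\partial F_N)$ is compact; it then suffices to show $\mathcal SCyl_\alpha(K)$ is \emph{closed} in $\mathcal H(\partial F_N)$ (it automatically lands in $\mathfrak C_N$ because $n\ge 2$ forces $\# S\ge 2$ via conditions (2),(3) and disjointness of distinct edge cylinders). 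Here I would use that each $Cyl_X(e_i)$ is \emph{clopen}: the condition $S\subseteq\bigcup_i Cyl_X(e_i)$ is closed because $\bigcup_i Cyl_X(e_i)$ is closed (a Vietoris-closed condition), and each condition $S\cap Cyl_X(e_i)\ne\emptyset$ is closed because $Cyl_X(e_i)$ is open — a standard fact that $\{S : S\cap U\ne\emptyset\}$ is Vietoris-closed when $U$ is open and \emph{clopen}-ness upgrades the usual "open" conclusion to "closed". More directly: one can show $\mathcal SCyl_\alpha(K)$ is clopen in $\mathcal H(\partial F_N)$ because both $\{S : S\subseteq\bigcup_i Cyl_X(e_i)\}$ and, for each $i$, $\{S : S\cap Cyl_X(e_i)\ne\emptyset\}$ are clopen, the latter since $Cyl_X(e_i)$ is clopen. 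A clopen subset of a compact space is compact.

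\medskip

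\textbf{Step 3: The cylinders form a basis.} Given $S_0\in\mathfrak C_N$ and a Vietoris-basic open neighborhood $V = \langle U_1,\dots,U_m\rangle$ of $S_0$ (with $U_j$ open in $\partial F_N$), I must produce a non-degenerate finite subtree $K$ with $S_0\in\mathcal SCyl_\alpha(K)\subseteq V$. Using the ultrametric: pick for each point of $S_0$ a small edge-cylinder around it contained in some $U_j$; by compactness of $S_0$ finitely many such cylinders $Cyl_X(f_1),\dots,Cyl_X(f_k)$ cover $S_0$, each inside some $U_j$, and we may arrange that each $U_j$ meeting $S_0$ contains at least one of them and that they are pairwise disjoint (shrink using the ultrametric so that cylinders are nested-or-disjoint). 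The edges $f_1,\dots,f_k$ have terminal vertices; let $K$ be the convex hull in $X$ of $\{o(f_\ell), t(f_\ell)\}$ together with enough of the $f_\ell$'s so that the $f_\ell$ become exactly the terminal edges of $K$ — concretely, take $K$ to be the union of the geodesics $[x, t(f_\ell)]$ where $x$ is a fixed vertex (say in $\mathrm{conv}(S_0)$), then truncate so the leaves are precisely the $t(f_\ell)$. Then $S_0\in\mathcal SCyl_\alpha(K)$ by construction, and any $S\in\mathcal SCyl_\alpha(K)$ satisfies $S\subseteq\bigcup_\ell Cyl_X(f_\ell)\subseteq\bigcup_j U_j$ with $S\cap Cyl_X(f_\ell)\ne\emptyset$ forcing $S\cap U_j\ne\emptyset$ for every $j$ that was hit; this gives $S\in V$. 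One must also remember the edge case $m=1$, $S_0\subseteq U_1$, handled the same way, and check $n = k\ge 2$, which holds since $\#S_0\ge 2$ and the $f_\ell$ are disjoint.

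\medskip

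\textbf{Main obstacle.} The routine parts are genuinely routine; the one place requiring care is Step~3, namely choosing the finite subtree $K$ so that its set of \emph{terminal} edges is \emph{exactly} the chosen collection $\{f_\ell\}$ — no more, no fewer — and simultaneously so that the resulting cylinder is squeezed inside $V$. The subtlety is that $\mathcal SCyl_\alpha(K)$ imposes, via condition (3), that $S$ meet \emph{each} terminal-edge cylinder, so spurious extra leaves of $K$ would shrink the cylinder too much and possibly exclude $S_0$; one resolves this by building $K$ as the convex hull of the finitely many terminal vertices $t(f_\ell)$ (plus a connecting base vertex if needed) and verifying that after this convex-hull construction the terminal edges are precisely the $f_\ell$, using the ultrametric/disjointness to rule out accidental coincidences or branchings. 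This is the crux; everything else is bookkeeping with clopen sets in a compact totally disconnected space.
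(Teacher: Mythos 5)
The paper offers no proof of Proposition~\ref{prop:topSN} at all --- it is dismissed as ``a straightforward exercise in unpacking the definitions'' --- so there is nothing to compare against; your write-up is the intended unpacking and is essentially correct. Your identification of the crux in Step~3 is the right one, and the convex-hull construction does work: if $Cyl_X(f_1),\dots,Cyl_X(f_k)$ are pairwise disjoint, then the branches of $X$ beyond the $t(f_\ell)$ (through the $f_\ell$) are pairwise disjoint, so every geodesic $[t(f_{\ell'}),t(f_\ell)]$ ends with the edge $f_\ell$ and no geodesic between two other terminal vertices passes through $t(f_\ell)$; hence the leaves of $K=\mathrm{Conv}_X(\{t(f_\ell)\})$ are exactly the $t(f_\ell)$ with terminal edges exactly the $f_\ell$, and no base vertex or truncation is needed. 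Two small repairs: (i) the sentence asserting that $\{S: S\cap U\ne\emptyset\}$ is Vietoris-\emph{closed} when $U$ is \emph{open} has the adjectives crossed (it is open when $U$ is open and closed when $U$ is closed/compact); your subsequent appeal to clopen-ness of $Cyl_X(e_i)$ is what actually carries the argument. (ii) Pairwise disjointness of the covering cylinders does not by itself force $k\ge 2$ --- a single large cylinder could swallow all of $S_0$ --- so you must additionally choose the cylinders of diameter smaller than $d_\alpha(\xi,\zeta)$ for two distinct points $\xi,\zeta\in S_0$ (e.g.\ by working at a single sufficiently deep level, which also makes the nested-or-disjoint refinement and the witnesses for each $U_j$ automatic). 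With these one-line fixes the proof is complete.
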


\begin{notation}\label{notation:edges}
Let $X$ be a simplicial tree and let $e$ be an oriented edge of $X$. Denote by $q(e)$ the set of all oriented edges $e'$ in $X$ such that $e,e'$ is a reduced edge-path in $X$. 

For any set $B$ we denote by $P_+(B)$ the set of all nonempty subsets of $B$.
\end{notation}

The following simple lemma is key for the Kirchhoff-type formulas for subset currents (see Proposition~\ref{prop:kirch} below).

\begin{lem}\label{lem:disj} Let $\Gamma, X$, $K\subset X$ be as in Definition~\ref{defn:gcyl} and let $e_1,\dots, e_n$ be the terminal edges of $K$, as in Definition \ref{defn:gcyl}.
Then for every $i=1,\dots, n$ we have
\[
\mathcal SCyl_\alpha(K)=\sqcup_{U\in P_+(q(e_i))} \mathcal SCyl_\alpha(K\cup U) .
\] 
\end{lem}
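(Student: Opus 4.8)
The plan is to unpack the Vietoris-style description of the cylinder $\mathcal SCyl_\alpha(K)$ given right after Definition~\ref{defn:gcyl} and track what happens at the terminal edge $e_i$. Fix $i$, and let $e_i'$ range over the edges in $q(e_i)$, so that $Cyl_X(e_i) = \sqcup_{e_i' \in q(e_i)} Cyl_X(e_i')$ is a disjoint decomposition of the boundary cylinder of $e_i$ into the boundary cylinders of its ``children'' (this uses that $X$ is a tree and $e_i$ is a non-oriented-back-tracking extension). For a closed set $S \in \mathcal SCyl_\alpha(K)$, the intersection $S \cap Cyl_X(e_i)$ is nonempty and closed; write $U = U(S) := \{ e_i' \in q(e_i) : S \cap Cyl_X(e_i') \ne \emptyset \}$. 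Then $U \in P_+(q(e_i))$ since $S\cap Cyl_X(e_i)\ne\emptyset$.

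The core of the argument is then a bijective-on-the-nose matching: I claim $S \in \mathcal SCyl_\alpha(K)$ with $U(S) = U$ if and only if $S \in \mathcal SCyl_\alpha(K \cup U)$. For this, note that the terminal edges of $K \cup U$ are exactly $e_1, \dots, \widehat{e_i}, \dots, e_n$ together with all the edges in $U$ (the vertex $t(e_i)$ now has degree $1 + \#U \ge 2$, so it is no longer a degree-$1$ vertex of $K\cup U$ unless $\#U=1$, but either way $t(e_i)$ is not a leaf since $\#U\geq 1$ gives it an incident edge from $U$; and the new leaves are the terminal vertices of the edges of $U$). Checking the three conditions: condition (1), closedness of $S$, is unaffected. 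Condition (2), that $S \subseteq \bigcup_{j=1}^n Cyl_X(e_j)$ versus $S \subseteq \bigcup_{j \ne i} Cyl_X(e_j) \cup \bigcup_{e_i' \in U} Cyl_X(e_i')$: using $Cyl_X(e_i) = \sqcup_{e_i' \in q(e_i)} Cyl_X(e_i')$, the second inclusion says $S$ meets no $Cyl_X(e_i')$ for $e_i' \in q(e_i) \setminus U$, which is precisely the statement $U(S) = U$ given that $S \subseteq \bigcup_j Cyl_X(e_j)$. Condition (3) for $K$ at the edges $e_j$, $j \ne i$, is identical for both cylinders; condition (3) for $K \cup U$ at the edges of $U$ says $S$ meets each $Cyl_X(e_i')$, $e_i' \in U$, which is exactly the definition of $U = U(S)$; and condition (3) for $K$ at $e_i$, namely $S \cap Cyl_X(e_i) \ne \emptyset$, follows from $U(S) = U \ne \emptyset$. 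So the two descriptions coincide.

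Finally, I assemble the pieces: every $S \in \mathcal SCyl_\alpha(K)$ lies in exactly one $\mathcal SCyl_\alpha(K \cup U)$, namely for $U = U(S)$, giving $\mathcal SCyl_\alpha(K) = \bigcup_{U \in P_+(q(e_i))} \mathcal SCyl_\alpha(K \cup U)$; and for $U \ne U'$ the sets $\mathcal SCyl_\alpha(K\cup U)$ and $\mathcal SCyl_\alpha(K\cup U')$ are disjoint because membership in them forces $U(S) = U$ and $U(S) = U'$ respectively, contradicting $U\neq U'$. Hence the union is disjoint, as claimed. I also need the degenerate-looking edge cases to be harmless: if $K \cup U$ happened to fail to be ``non-degenerate'', the cylinder would be undefined, but $K$ already has $n \ge 2$ terminal edges and $K \cup U \supseteq K$ keeps at least the leaves $e_j$, $j\neq i$, and the leaves from $U$, so $K\cup U$ always has at least two degree-$1$ vertices and the notation $\mathcal SCyl_\alpha(K\cup U)$ is legitimate.

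\textbf{Main obstacle.} The only genuinely delicate point is bookkeeping the terminal edges of $K\cup U$ correctly, in particular confirming that $t(e_i)$ stops being a leaf and that no \emph{other} vertex of $K$ changes degree — i.e., that $U \subseteq q(e_i)$ really attaches only at $t(e_i)$ and reduced-ness of $e_i, e_i'$ prevents $e_i'$ from folding back into $K$. Once that is nailed down, the equivalence of the three Vietoris conditions is the routine set-theoretic verification sketched above, and the disjointness is immediate from the uniqueness of $U(S)$.
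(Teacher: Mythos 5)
Your proof is correct and follows essentially the same route as the paper's: both arguments assign to each $S\in\mathcal SCyl_\alpha(K)$ the set $U(S)\subseteq q(e_i)$ of continuations of $e_i$ whose cylinders meet $S$, and identify $\mathcal SCyl_\alpha(K\cup U)$ with the set of those $S$ having $U(S)=U$. You in fact go a bit further than the paper, which leaves the disjointness of the union as an exercise; your bookkeeping of the terminal edges of $K\cup U$ and the resulting uniqueness of $U(S)$ supplies exactly that missing verification (the stray ``unless $\#U=1$'' aside, which you immediately correct).
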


\begin{proof}
Fix $i, 1\le i\le n$.
It is obvious from the definitions that for every such nonempty $U$   we have $\mathcal SCyl_\alpha(K\cup U)\subseteq \mathcal SCyl_\alpha(K)$ and hence the union of $\mathcal SCyl_\alpha(K\cup U)$ over all such $U$ is contained in $\mathcal SCyl_\alpha(K)$. 

Let $S\in \mathcal SCyl_\alpha(K)$ be arbitrary. By the definition of $\mathcal SCyl_\alpha(K)$ we have $S\cap Cyl_X(e_i)\ne\emptyset$. Let $U$ be the set of all edges $e\in EX$ such that there exists a point $\xi\in S$ that contains the geodesic ray in $X$ beginning with the edge-path $(e_i,e)$. 
Then $S\subseteq \mathcal SCyl_\alpha(K\cup U)$.

It follows that 
\[
\mathcal SCyl_\alpha(K)\subseteq \cup_U \mathcal SCyl_\alpha(K\cup U)
\]
with $U$ as required. 
We leave it as an exercise to the reader verifying that the union on the right-hand side in the above formula is a disjoint union.    
\end{proof}

\subsection{Subset currents}\label{subsec:currents}

Observe that the left translation action of $F_N$ on $\partial F_N$ naturally
extends to a left translation action by homeomorphisms on $\mathfrak
C_N$. We can now define the main notion of this paper:

\begin{defn}[Subset currents] 
A \emph{subset current} on $F_N$ is a positive Borel measure $\mu$ on $\mathfrak C_N$ which is $F_N$-invariant and locally finite (i.e., finite on all compact subsets of $\mathfrak C_N$).

The set of all subset currents on $F_N$ is denoted $\gcn$. The space $\gcn$ is endowed with the natural weak-* topology of convergence of integrals of continuous functions with compact support.

Define an equivalence relation $\sim$ on $\gcn-\{0\}$ as: $\mu\sim\mu'$ if $\mu=c\mu'$ for some $c>0$, where $\mu,\mu'\in \gcn-\{0\}$.  For a nonzero $\mu\in\gcn$ the $\sim$-equivalence class of $\mu$ is denoted be $[\mu]$ and is called the \emph{projective class} of $\mu$. Put $\pgcn=\left(\gcn-\{0\}\right)/\sim$ and endow $\pgcn$ with the quotient topology. 
\end{defn}

It is not hard to check (c.f. the proof by Francaviglia~\cite{Fra} of a similar statement for ordinary geodesic currents on $F_N$) that the weak-* topology on $\gcn$ can be described in more concrete terms:
\begin{prop}\label{top:GC}
Let $\alpha:F_N\to \pi_1(\Gamma)$ be a marking on $F_N$, and let $X=\widetilde \Gamma$ be the universal cover of $\Gamma$.
\begin{enumerate}
\item Let $\mu, \mu_n\in \gcn$, where $n=1,2,\dots$. Then $\lim_{n\to\infty} \mu_n=\mu$ in $\gcn$ if and only if for every finite non-degenerate subtree $K$ of $X$ we have
\[
\lim_{n\to\infty} \mu_n(\mathcal SCyl_\alpha(K))= \mu(\mathcal SCyl_\alpha(K)).
\]

\item For each  finite non-degenerate subtree $K$ of $X$ the function 
\[
\gcn\to\mathbb R, \quad \mu\mapsto \mu(\mathcal SCyl_\alpha(K))
\]
is continuous on $\gcn$.
\item Let $\mu\in \gcn$. For $\epsilon>0$ and an integer $M\ge 1$ let $U(M,\epsilon,\mu)$ be the set of all $\mu'\in \gcn$ such that for every finite non-degenerate subtree $K$ of $X$ with at most $M$ edges we have
\[
\left| \mu'(\mathcal SCyl_\alpha(K))- \mu(\mathcal SCyl_\alpha(K)) \right|<\epsilon.
\] 
Then the family $\{U(M,\epsilon, \mu): M\ge 1, 0<\epsilon<1\}$ forms a basis of open neighborhoods for $\mu$ in $\gcn$.
\end{enumerate}
\end{prop}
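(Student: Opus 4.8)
The plan is to prove Proposition~\ref{top:GC} by reducing everything to the Riesz representation picture and exploiting the fact, established in Proposition~\ref{prop:topSN}, that the subset cylinders $\mathcal SCyl_\alpha(K)$ form a basis of compact-open sets for the locally compact totally disconnected space $\mathfrak C_N$. The general principle is that on such a space the characteristic functions $\mathbf 1_{\mathcal SCyl_\alpha(K)}$ are continuous and compactly supported, so weak-$*$ convergence of currents tested against continuous compactly supported functions automatically gives convergence of the numbers $\mu_n(\mathcal SCyl_\alpha(K))$; the content of the proposition is the converse, that convergence on this countable family of cylinders suffices.

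First I would prove part (2), since it is essentially immediate: by Proposition~\ref{prop:topSN}(1) the set $\mathcal SCyl_\alpha(K)$ is compact and open, so its indicator function $f_K=\mathbf 1_{\mathcal SCyl_\alpha(K)}$ is continuous with compact support, and $\mu\mapsto \int f_K\, d\mu=\mu(\mathcal SCyl_\alpha(K))$ is by definition continuous for the weak-$*$ topology. Next, for part (1), the forward implication is just part (2) applied to each $K$. For the reverse implication I would argue as follows: fix a continuous $f$ with compact support $C\subseteq\mathfrak C_N$. Since the $\mathcal SCyl_\alpha(K)$ form a basis of compact-open sets and $C$ is compact, $C$ is covered by finitely many of them, so the span of the $f_K$ together with the constant-on-compact-sets functions is dense — more precisely, using Lemma~\ref{lem:disj} one can refine any finite cover of $C$ by cylinders into a finite \emph{partition} of a compact-open neighborhood of $C$ into cylinders, and then approximate $f$ uniformly by a finite $\mathbb R$-linear combination $g=\sum_j c_j f_{K_j}$ supported in that neighborhood. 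This uses continuity of $f$ plus compactness of $C$ (uniform continuity with respect to $D_\alpha$, and the fact that refining the partition shrinks the diameters of the pieces since a cylinder $\mathcal SCyl_\alpha(K)$ for $K$ large has small Hausdorff diameter). Then $\big|\int f\,d\mu_n-\int f\,d\mu\big|\le \|f-g\|_\infty(\mu_n(V)+\mu(V)) + \big|\int g\,d\mu_n-\int g\,d\mu\big|$ for a fixed compact-open $V\supseteq C$; the second term tends to $0$ by hypothesis applied to each $K_j$, and the first is small provided $\mu_n(V)$ stays bounded — which follows because $\mu_n(V)=\sum$ of finitely many $\mu_n(\mathcal SCyl_\alpha(K_j))$ and each converges. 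Finally, part (3) is a formal consequence: the neighborhoods $U(M,\epsilon,\mu)$ are intersections of finitely many preimages of $\epsilon$-balls under the continuous maps of part (2) (there are only finitely many non-degenerate subtrees $K\subseteq X$ with at most $M$ edges containing a fixed vertex, up to the $F_N$-action, and $\mu$ is $F_N$-invariant so translates give the same cylinder value), hence open; and any weak-$*$ neighborhood of $\mu$ contains one of them by the approximation argument of part (1), since a given $f$ is approximated by $g$ using only cylinders with a bounded number of edges.

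The main obstacle I expect is the reverse direction of part (1): turning a finite cover of the compact support $C$ by cylinders into a finite \emph{disjoint} partition into cylinders with uniformly small diameter, so that an arbitrary continuous compactly supported $f$ can be uniformly approximated by a step function constant on cylinders. This is where Lemma~\ref{lem:disj} does the real work — it lets us subdivide a cylinder $\mathcal SCyl_\alpha(K)$ into the disjoint cylinders $\mathcal SCyl_\alpha(K\cup U)$ obtained by growing $K$ at a chosen terminal edge — and one must iterate this to get all pieces to have simplicial "depth" at least some large $m$, hence Hausdorff-$D_\alpha$-diameter at most $2^{-m}$, while keeping the union of pieces a fixed compact-open set. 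One also has to be slightly careful that the pieces of the partition stay within a fixed compact set (so that the bound $\mu_n(V)\to\mu(V)<\infty$ is available), which is where local finiteness of the $\mu_n$ and $\mu$, together with compactness of $C$, is used. Once this combinatorial approximation lemma is in place, the rest is routine estimation.
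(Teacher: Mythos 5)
Your argument is correct and is essentially the standard one that the paper itself omits, deferring instead to Francaviglia's analogous proof for ordinary geodesic currents: part (2) is immediate from the compact-openness of cylinders, the reverse direction of (1) follows by uniformly approximating a compactly supported continuous function by a step function on a disjoint cylinder partition refined via Lemma~\ref{lem:disj} (with the needed uniform bound $\sup_n\mu_n(V)<\infty$ coming from convergence on the finitely many cylinders composing $V$), and (3) follows from finiteness of the set of $F_N$-translation classes of subtrees with at most $M$ edges. No gaps; the point you flag about deep cylinders having small Hausdorff diameter is exactly the right technical detail to verify.
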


The following key observation follows directly from the definition of subset cylinders (see Definition \ref{defn:gcyl} above) and from the fact that subset currents are $F_N$-invariant.

\begin{propdfn}[Weights]\label{propdfn:weights}
Let $\alpha: F_N\rightarrow \pi_1(\Gamma)$ be a marking on $F_N$, let $X=\widetilde \Gamma$, and let $K$ be a finite non-degenerate subtree of $X$. Then for every element $g\in F_N$ we have
\[
g\mathcal SCyl_\alpha(K)=\mathcal SCyl_\alpha(gK)
\]
and
\[
\mu(\mathcal SCyl_\alpha(K))=\mu(g\mathcal SCyl_\alpha(K)).\tag{$\ast$}
\]
We denote \[ (K;\mu)_\alpha:=\mu(\mathcal SCyl_\alpha(K))\] and call it the \emph{weight} of $K$ in $\mu$.
For a given finite subtree $K$ of $X$, we denote the $F_N$-translation class of $K$ by $[K]$ (so that $[K]$ consists of all the translates of $K$ by elements of $F_N$). 
We put
\[
( [K]; \mu)_\alpha:=( K; \mu)_\alpha
\] 
and call it the \emph{weight} of $[K]$ in $\mu$. In view of $(\ast)$, the weight $( [K]; \mu)_\alpha$ is well-defined and does not depend on the choice of $K$ in $[K]$. It defines a continuous function on $\gcn$.
\end{propdfn}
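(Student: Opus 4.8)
The plan is to derive the whole Proposition-Definition formally from Definition~\ref{defn:gcyl} and from the $F_N$-invariance of $\mu$; the only point requiring any verification is the identity $g\mathcal SCyl_\alpha(K)=\mathcal SCyl_\alpha(gK)$, and once that is in hand everything else is immediate.

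\textbf{Step 1: equivariance of boundary cylinders.} Fix a marking $\alpha:F_N\isom\pi_1(\Gamma)$ and let $X=\widetilde\Gamma$, on which $F_N$ acts freely by simplicial automorphisms (deck transformations), so that the induced boundary action on $\partial X=\partial F_N$ is the standard translation action. First I would record that for every oriented edge $e$ of $X$ and every $g\in F_N$ one has $g\cdot Cyl_X(e)=Cyl_X(ge)$: the deck transformation $g$ carries a geodesic ray in $X$ beginning with $e$ homeomorphically onto a geodesic ray beginning with $ge$, and conversely, so the corresponding homeomorphic images in $\partial F_N$ coincide.

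\textbf{Step 2: equivariance of subset cylinders.} Next I would use that the hyperspace bracket construction $\langle U_1,\dots,U_n\rangle$ is equivariant under any self-homeomorphism $h$ of $\partial F_N$, i.e. $h\langle U_1,\dots,U_n\rangle=\langle hU_1,\dots,hU_n\rangle$, which is clear from the definition of the Vietoris basic sets. Since $e_1,\dots,e_n$ are the terminal edges of $K$, the terminal edges of $gK$ are precisely $ge_1,\dots,ge_n$, and therefore, using Step 1 in the middle equality,
\[
g\mathcal SCyl_\alpha(K)=g\langle Cyl_X(e_1),\dots,Cyl_X(e_n)\rangle=\langle Cyl_X(ge_1),\dots,Cyl_X(ge_n)\rangle=\mathcal SCyl_\alpha(gK).
\]
This establishes the first displayed identity of the statement.

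\textbf{Step 3: invariance of weights, well-definedness, continuity.} Since $\mu$ is an $F_N$-invariant positive Borel measure on $\mathfrak C_N$, we have $\mu(gB)=\mu(B)$ for every Borel $B\subseteq\mathfrak C_N$ and every $g\in F_N$; taking $B=\mathcal SCyl_\alpha(K)$ gives $(\ast)$, and combining with Step 2 yields $\mu(\mathcal SCyl_\alpha(gK))=\mu(\mathcal SCyl_\alpha(K))$. Hence if $K'\in[K]$, say $K'=gK$, then $(K';\mu)_\alpha=(K;\mu)_\alpha$, so $([K];\mu)_\alpha$ is independent of the representative. Finally, the continuity of $\mu\mapsto([K];\mu)_\alpha$ on $\gcn$ is exactly Proposition~\ref{top:GC}(2) applied to any representative $K$. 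I expect no genuine obstacle here: the only place deserving care is Step~1, namely checking that the identification $\partial X=\partial F_N$ intertwines the deck action on $X$ with the translation action on $\partial F_N$ so that the cylinders $Cyl_X(e)$ transform as asserted, and this is immediate from the fact that deck transformations are simplicial isometries of $X$.
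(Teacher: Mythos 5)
Your proposal is correct and follows exactly the route the paper intends: the paper states this as an immediate consequence of the definition of subset cylinders and the $F_N$-invariance of $\mu$, and your Steps 1--3 simply spell out that verification (equivariance of the edge cylinders and of the Vietoris bracket, then invariance of the measure, with continuity supplied by Proposition~\ref{top:GC}(2)). No gaps.
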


\begin{cor}
Let $\alpha, \Gamma$ and $X$ be as in Proposition~\ref{prop:topSN}. Let $K$ be a finite non-degenerate subtree of $X$.  Then the function $f:\gcn\to\R$ given by $f(\mu)=(K; \mu)_\alpha$, where $\mu\in \gcn$, is continuous.
\end{cor}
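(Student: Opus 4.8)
The statement to prove is the Corollary immediately following Proposition-Definition~\ref{propdfn:weights}, which asserts that for a fixed finite non-degenerate subtree $K\subseteq X$, the function $f\colon\gcn\to\R$, $f(\mu)=(K;\mu)_\alpha=\mu(\mathcal SCyl_\alpha(K))$ is continuous on $\gcn$. This is essentially an immediate consequence of Proposition~\ref{top:GC}(2), so the ``proof'' is really a pointer.

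The plan is as follows. First I would observe that by Proposition~\ref{prop:topSN}(1), for any finite non-degenerate subtree $K\subseteq X$ the subset cylinder $\mathcal SCyl_\alpha(K)\subseteq\mathfrak C_N$ is compact and open. Hence its indicator function $\mathbf 1_{\mathcal SCyl_\alpha(K)}$ is continuous with compact support on $\mathfrak C_N$, and for every $\mu\in\gcn$ the integral $\int_{\mathfrak C_N}\mathbf 1_{\mathcal SCyl_\alpha(K)}\,d\mu=\mu(\mathcal SCyl_\alpha(K))=f(\mu)$ is finite since $\mu$ is locally finite. Then, by the very definition of the weak-$*$ topology on $\gcn$ (convergence of integrals of continuous compactly supported functions), the map $\mu\mapsto\int\mathbf 1_{\mathcal SCyl_\alpha(K)}\,d\mu$ is continuous; this is exactly the content of Proposition~\ref{top:GC}(2), which I may invoke directly.

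Concretely, I would write: given a net (or, using the metrizability on bounded sets / first-countability consequences of Proposition~\ref{top:GC}(3), a sequence) $\mu_n\to\mu$ in $\gcn$, Proposition~\ref{top:GC}(1) gives $\mu_n(\mathcal SCyl_\alpha(K))\to\mu(\mathcal SCyl_\alpha(K))$ for the particular subtree $K$, i.e. $f(\mu_n)\to f(\mu)$; combined with Proposition~\ref{top:GC}(3) showing the $U(M,\epsilon,\mu)$ form a neighborhood basis, this upgrades to genuine continuity at every point. Alternatively, and more cleanly, I would just cite that $f$ is one of the coordinate functions whose continuity is asserted in Proposition~\ref{top:GC}(2).

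There is no real obstacle here; the only thing one must be slightly careful about is that ``$\mathcal SCyl_\alpha(K)$ is compact and open'' genuinely makes $\mathbf 1_{\mathcal SCyl_\alpha(K)}$ a legitimate continuous function of compact support on the locally compact totally disconnected space $\mathfrak C_N$ (so that pairing against it is part of the definition of the weak-$*$ topology), and this is precisely Proposition~\ref{prop:topSN}(1). So the proof amounts to: \emph{This is a special case of Proposition~\ref{top:GC}(2); equivalently, $\mathcal SCyl_\alpha(K)$ is compact and open by Proposition~\ref{prop:topSN}(1), so $\mathbf 1_{\mathcal SCyl_\alpha(K)}$ is continuous with compact support, and $f(\mu)=\int_{\mathfrak C_N}\mathbf 1_{\mathcal SCyl_\alpha(K)}\,d\mu$ is continuous in $\mu$ by definition of the weak-$*$ topology on $\gcn$.} $\Box$
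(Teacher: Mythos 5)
Your proposal is correct and follows exactly the route the paper intends: the corollary is an immediate instance of Proposition~\ref{top:GC}(2), with the underlying justification being that $\mathcal SCyl_\alpha(K)$ is compact and open by Proposition~\ref{prop:topSN}(1), so that $f(\mu)=\mu(\mathcal SCyl_\alpha(K))$ is the pairing of $\mu$ with a continuous compactly supported indicator function and is therefore continuous in the weak-$*$ topology. Nothing further is needed.
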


\begin{notation}\label{not:e}
Let $\alpha, \Gamma$ and $X$ be as in Proposition~\ref{prop:topSN}.  For every topological edge $e\in E_{top}\Gamma$ denote by $\widetilde e$ the subgraph of $X=\widetilde \Gamma$ consisting of any lift of $e$.  By $F_N$-invariance of subset currents, the weight $(\widetilde e; \mu)_\alpha$ depends only on $\mu, \alpha$ and $e$ and does not depend on the choice of a lift $\widetilde e$ of $e$ to $X$. We thus denote $(e; \mu)_\alpha:=(\widetilde e; \mu)_\alpha$. 
\end{notation}

\begin{prop}[Kirchhoff formulas for weights]\label{prop:kirch}
Let $\alpha: F_N\rightarrow \pi_1(\Gamma)$ be a marking on $F_N$, let $X=\widetilde \Gamma$, and let $K$ be a finite non-degenerate subtree of $X$ with terminal edges $e_1,\dots, e_n$, as in Definition \ref{defn:gcyl}. Let $\mu\in \gcn$.
Then for every $i=1,\dots, n$ we have
\[
(K; \mu)_\alpha=\sum_{U\in P_+(q(e_i))} (K\cup U; \mu)_\alpha ,\tag{$\bigstar$}
\]
in notations of \ref{notation:edges}.
\end{prop}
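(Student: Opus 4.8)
The plan is to derive the Kirchhoff formula $(\bigstar)$ directly from Lemma~\ref{lem:disj} by applying the measure $\mu$ to both sides of the set-theoretic identity established there. Fix $i$ with $1\le i\le n$. By Lemma~\ref{lem:disj} we have the disjoint union decomposition
\[
\mathcal SCyl_\alpha(K)=\bigsqcup_{U\in P_+(q(e_i))} \mathcal SCyl_\alpha(K\cup U).
\]
The first thing I would check is that this decomposition is a \emph{countable} (in fact finite, if $X$ is locally finite, but countable suffices in general) disjoint union of Borel sets, so that countable additivity of the measure $\mu$ applies. Since $q(e_i)$ is the set of oriented edges $e'$ with $e_i,e'$ a reduced edge-path, $q(e_i)$ is at most countable (being a subset of $EX$, and $X=\widetilde\Gamma$ with $\Gamma$ finite, so $X$ is locally finite and $q(e_i)$ is actually finite); hence $P_+(q(e_i))$ is countable. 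Each $\mathcal SCyl_\alpha(K\cup U)$ is compact and open in $\mathfrak C_N$ by Proposition~\ref{prop:topSN}(1), hence Borel.

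Given this, countable additivity of $\mu$ yields
\[
\mu(\mathcal SCyl_\alpha(K))=\sum_{U\in P_+(q(e_i))} \mu(\mathcal SCyl_\alpha(K\cup U)).
\]
Rewriting both sides using the weight notation $(K;\mu)_\alpha:=\mu(\mathcal SCyl_\alpha(K))$ from Proposition-Definition~\ref{propdfn:weights}, and noting that $K\cup U$ is again a finite non-degenerate subtree of $X$ (it is obtained from the non-degenerate tree $K$ by attaching edges at the degree-one vertex $t(e_i)$, which keeps it a finite simplicial subtree with at least two degree-one vertices), we obtain exactly
\[
(K;\mu)_\alpha=\sum_{U\in P_+(q(e_i))} (K\cup U;\mu)_\alpha,
\]
which is $(\bigstar)$. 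The one subtlety worth a sentence is that $K\cup U$ could a priori fail the non-degeneracy requirement only if $K$ itself were degenerate or if attaching $U$ somehow removed a degree-one vertex, neither of which happens here; so every term on the right is a legitimate weight.

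The main (and essentially only) obstacle is the bookkeeping in justifying the disjointness and the countability/measurability, but all of that has already been packaged into Lemma~\ref{lem:disj} and Proposition~\ref{prop:topSN}; once those are invoked, the proof is a one-line application of countable additivity of $\mu$ together with the $F_N$-invariance already used to make the weights well-defined. In short: apply $\mu$ to the identity of Lemma~\ref{lem:disj}, use countable additivity over the Borel sets $\mathcal SCyl_\alpha(K\cup U)$, and translate into weight notation.
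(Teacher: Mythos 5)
Your proof is correct and follows the same route as the paper: apply $\mu$ to the disjoint decomposition of $\mathcal SCyl_\alpha(K)$ furnished by Lemma~\ref{lem:disj} and use additivity of the measure (indeed, as you note, $q(e_i)$ is finite since $X=\widetilde\Gamma$ is locally finite, so finite additivity suffices, which is exactly what the paper invokes). The extra checks on measurability and on $K\cup U$ remaining a non-degenerate finite subtree are fine but not needed beyond what Proposition~\ref{prop:topSN} and Lemma~\ref{lem:disj} already provide.
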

\begin{proof}
The statement follows directly from Lemma~\ref{lem:disj} and from the fact that $\mu$ is finite-additive.
\end{proof}

Since the Borel $\sigma$-algebra on $\mathfrak C_N$ is generated by
the collection of all subset cylinders, it follows, by Kolmogorov Extension Theorem, that any $F_N$-invariant system of weights on all the subset cylinders satisfying the Kirchhoff formulas actually defines a subset current:

\begin{prop}\label{prop:weights}
Let $\alpha, \Gamma$ and $X$ be as in Proposition~\ref{prop:kirch}.
Let $\mathcal K_\Gamma$ be the set of all finite non-degenerate simplicial subtrees of $X$ and let 
\[
\vartheta: \mathcal K_\Gamma\to [0,\infty)
\]
be a function satisfying the following conditions:
\begin{enumerate}
\item For every $K\in \mathcal K_\Gamma$ and every $g\in F_N$ we have $\vartheta(gK)=\vartheta(K)$.

\item For every $K\in \mathcal K_\Gamma$ and every terminal edge $e$ of $K$ we have
\[
\vartheta(K)=\sum_{U\in P_+(q(e))} \vartheta(K\cup U).
\]

\end{enumerate}
Then there exists a unique $\mu\in \gcn$ such that for every $K\in \mathcal K_\Gamma$ we have
\[
\vartheta(K)=(K; \mu)_\alpha.
\]
\end{prop}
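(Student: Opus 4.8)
The plan is to construct the desired measure $\mu$ via the Kolmogorov Extension Theorem (equivalently, the Carathéodory extension procedure), using the subset cylinders as a generating semi-ring for the Borel $\sigma$-algebra on $\mathfrak C_N$. First I would record the basic set-theoretic facts about the cylinders: by Proposition~\ref{prop:topSN} the sets $\mathcal SCyl_\alpha(K)$ are compact open, they generate the Borel $\sigma$-algebra, and — crucially for the extension argument — the intersection of two subset cylinders $\mathcal SCyl_\alpha(K_1)\cap\mathcal SCyl_\alpha(K_2)$ is a finite disjoint union of subset cylinders. This last point follows by repeatedly applying Lemma~\ref{lem:disj}: enlarging $K_1$ by all possible choices of edges past each of its terminal edges, one can refine $K_1$ to subtrees that either contain $K_2$ or are disjoint from $\mathcal SCyl_\alpha(K_2)$. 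Hence the collection $\mathcal C$ of all finite disjoint unions of subset cylinders (together with $\emptyset$) forms an algebra of subsets of $\mathfrak C_N$ (or a semiring of the cylinders themselves, whichever is more convenient).

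Next I would define a finitely additive pre-measure $\nu_0$ on this algebra by declaring $\nu_0(\mathcal SCyl_\alpha(K)) := \vartheta(K)$ and extending additively to disjoint unions. The key point is \emph{well-definedness}: if a cylinder is partitioned in two different ways into smaller cylinders, the two resulting sums of $\vartheta$-values must agree. By the refinement procedure above it suffices to check consistency for the elementary refinements provided by Lemma~\ref{lem:disj}, namely $\mathcal SCyl_\alpha(K) = \sqcup_{U\in P_+(q(e))}\mathcal SCyl_\alpha(K\cup U)$ for a terminal edge $e$ of $K$ — and this is exactly hypothesis (2) on $\vartheta$. A standard diamond/confluence argument (any two refinements of a cylinder have a common refinement obtained by iterating the elementary moves) then shows $\nu_0$ is a well-defined finitely additive function on $\mathcal C$ with $\nu_0(\mathfrak C_N)$ possibly infinite; restricted to any single cylinder $\mathcal SCyl_\alpha(K)$ it is finite.

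Then I would upgrade finite additivity to countable additivity (on the algebra), which by Carathéodory's theorem yields a genuine Borel measure. Here I would exploit compactness: each $\mathcal SCyl_\alpha(K)$ is compact, and each is also open, so any countable disjoint cover of a cylinder by cylinders must in fact be \emph{finite} (the cover is an open cover of a compact set, and the pieces are pairwise disjoint, so only finitely many are nonempty). This reduces countable additivity to finite additivity, which we already have; by the Carathéodory/Kolmogorov extension theorem $\nu_0$ extends uniquely to a measure $\mu$ on $\sigma(\mathcal C) = $ the Borel $\sigma$-algebra of $\mathfrak C_N$. This $\mu$ is locally finite because every point has a compact open cylinder neighborhood of finite $\mu$-mass (finiteness of $\vartheta$ on $\mathcal K_\Gamma$), and it is $F_N$-invariant: the map $S\mapsto gS$ sends $\mathcal SCyl_\alpha(K)$ to $\mathcal SCyl_\alpha(gK)$ by Proposition-Definition~\ref{propdfn:weights}, and $\vartheta(gK)=\vartheta(K)$ by hypothesis (1), so $\mu$ and $g_*\mu$ agree on all cylinders, hence everywhere. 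Thus $\mu\in\gcn$, and $(K;\mu)_\alpha = \mu(\mathcal SCyl_\alpha(K)) = \vartheta(K)$. Uniqueness of $\mu$ is immediate since two subset currents agreeing on all cylinders agree on the generating algebra $\mathcal C$, hence on all Borel sets.

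I expect the main obstacle to be the well-definedness of $\nu_0$, i.e. verifying that any two finite partitions of a subset cylinder into subcylinders admit a common refinement reached by iterating the elementary Kirchhoff moves of Lemma~\ref{lem:disj}. The subtlety is that, unlike the interval/word setting where cylinders are linearly ordered by prefix, here the refinement must be done simultaneously past all terminal edges of a subtree, and one must check that the order in which one performs the elementary moves does not matter — a confluence statement. Once that bookkeeping is in place, the passage from finite to countable additivity is essentially free thanks to compactness of the cylinders, and the rest (local finiteness, $F_N$-invariance, uniqueness) is routine.
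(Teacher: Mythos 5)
Your proposal is correct and follows the same route the paper takes: the paper offers no written proof, merely asserting the result via the Kolmogorov Extension Theorem from the fact that the subset cylinders generate the Borel $\sigma$-algebra, and your argument is a careful fleshing-out of exactly that extension procedure (semiring of cylinders, well-definedness via the Kirchhoff refinements of Lemma~\ref{lem:disj}, countable additivity from compactness of the cylinders, and invariance from hypothesis (1)). You correctly identify the only real point of substance — the confluence/common-refinement bookkeeping needed for well-definedness of the pre-measure — which the paper leaves implicit.
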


\begin{prop}
The space $\gcn$ is locally compact and the space $\pgcn$ is compact.
\end{prop}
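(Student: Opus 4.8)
The plan is to deduce both statements from the weight description of the topology in Proposition~\ref{top:GC} together with the Kirchhoff relations of Proposition~\ref{prop:kirch} and the reconstruction result of Proposition~\ref{prop:weights}. Fix a marking $\alpha:F_N\to\pi_1(\Gamma)$ and let $X=\widetilde\Gamma$; all cylinders and weights below are taken with respect to $\alpha$.

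For local compactness of $\gcn$, fix $\mu\in\gcn$. Choose one edge $e\in E_{top}\Gamma$ and a lift $\widetilde e$ in $X$; any $S\in\mathfrak C_N$ has a non-degenerate convex hull, so $\mathfrak C_N$ is covered by the translates $g\mathcal SCyl_\alpha(\widetilde e)$, and any compactly supported current is controlled by the single weight $(e;\mu)_\alpha=\mu(\mathcal SCyl_\alpha(\widetilde e))$. More precisely, set $c:=(e;\mu)_\alpha+1$ and let $\CC$ be the set of all $\mu'\in\gcn$ with $(e;\mu')_\alpha\le c$; this is a weak-$*$ closed neighborhood of $\mu$ by Proposition~\ref{top:GC}(2). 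I claim $\CC$ is compact. Any finite non-degenerate subtree $K$ contains a terminal edge, and by iterating the Kirchhoff formula ($\bigstar$) of Proposition~\ref{prop:kirch} downward one sees that every weight $(K;\mu')_\alpha$ is bounded by a sum of weights of single-edge subtrees, hence, using $F_N$-invariance to translate each such edge into $\Gamma$ and summing over the finitely many edges of $\Gamma$, by $(\#E_{top}\Gamma)\cdot c$ — call this bound $B(K)$, which depends only on $K$ and on $c$. Thus for every $\mu'\in\CC$ and every $K$ we have $(K;\mu')_\alpha\in[0,B(K)]$. Embed $\CC$ into the product $\prod_{K}[0,B(K)]$ over all finite non-degenerate subtrees $K$ of $X$ via $\mu'\mapsto\big((K;\mu')_\alpha\big)_K$; by Proposition~\ref{top:GC}(1) this map is a homeomorphism onto its image and onto the topology generated by these coordinate functions. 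The product space is compact by Tychonoff, so it remains to check the image is closed. This is where the content lies: a point $\vartheta=(\vartheta_K)_K$ of the product lies in the image iff it satisfies the $F_N$-invariance condition $\vartheta_{gK}=\vartheta_K$ and the Kirchhoff relations of Proposition~\ref{prop:weights}(2); both are closed conditions (each is an equality of finitely-many coordinate functions, or a countable conjunction of such), so the set of $\vartheta$ satisfying them is closed in the product, and by Proposition~\ref{prop:weights} every such $\vartheta$ is realized by a unique current, which moreover lies in $\CC$ because its $e$-weight is $\vartheta_{\widetilde e}\le c$. Hence $\CC$ is homeomorphic to a closed subset of a compact space, so it is compact, and $\gcn$ is locally compact.

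For compactness of $\pgcn$, I would repeat the argument with the normalization replacing the absolute bound. Let $\Sigma:=\{\mu\in\gcn:\sum_{e\in E_{top}\Gamma}(e;\mu)_\alpha=1\}$. Every nonzero current has $\sum_e(e;\mu)_\alpha>0$ — indeed any $S\in\mathfrak C_N$ lies in some translate of some $\mathcal SCyl_\alpha(\widetilde e)$, so if all edge-weights vanished the measure would be $0$ — so $\Sigma$ is a cross-section for the $\R_{>0}$-scaling action and the natural map $\Sigma\to\pgcn$ is a continuous surjection. As above, the Kirchhoff relations bound every weight $(K;\mu)_\alpha$ for $\mu\in\Sigma$ by a constant $B'(K)$ depending only on $K$; the same Tychonoff-plus-closedness argument shows $\Sigma$ is compact (the extra linear normalization equation is again a closed condition on the coordinates). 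Therefore $\pgcn$, being a continuous image of the compact space $\Sigma$, is compact.

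The main obstacle is verifying that the image of $\CC$ (resp.\ of $\Sigma$) in the coordinate product is closed, i.e.\ that a pointwise limit of weight-systems coming from currents again satisfies the $F_N$-invariance and Kirchhoff constraints and hence, by Proposition~\ref{prop:weights}, comes from a current. This is really just the observation that these constraints are each expressed by equalities among finitely many of the coordinate projections and so cut out a closed subset; the only care needed is to confirm, via a downward induction on the number of edges using ($\bigstar$), that the a priori bounds $B(K)$ are genuinely finite and uniform over $\CC$, so that the relevant subtrees' weights live in a compact box in the first place. Everything else — continuity of the weight functionals, the homeomorphism onto the coordinate topology, Tychonoff — is routine given the earlier results.
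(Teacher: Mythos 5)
Your overall strategy --- bound every weight $(K;\mu')_\alpha$ in terms of edge weights via the Kirchhoff relations $(\bigstar)$, extract a coordinatewise limit of weight systems, and reconstruct the limiting current from Proposition~\ref{prop:weights} --- is exactly the paper's (the paper phrases the limit extraction as a diagonalization over the countably many weights rather than via Tychonoff, which is the same thing here since the index set is countable). Your identification of the closedness of the invariance and Kirchhoff constraints as the substantive point, and your treatment of $\pgcn$ via the normalized cross-section $\Sigma=\{\mu:\sum_{e\in E_{top}\Gamma}(e;\mu)_\alpha=1\}$, also match the paper.

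There is, however, a genuine error in your choice of compact neighborhood. You take $\CC=\{\mu'\in\gcn : (e;\mu')_\alpha\le c\}$ for a \emph{single} edge $e$ of $\Gamma$ and claim the Kirchhoff relations bound every weight $(K;\mu')_\alpha$ by $(\#E_{top}\Gamma)\cdot c$. Iterating $(\bigstar)$ downward only gives $(K;\mu')_\alpha\le (e'';\mu')_\alpha$ for edges $e''$ of $\Gamma$ that actually have a lift inside $K$, and the defining condition of $\CC$ controls none of the edge weights except that of $e$. Concretely, take $\Gamma=R_N$ and $e$ the petal labelled $a_1$: the convex hull of $\Lambda(g\langle a_2\rangle g^{-1})$ is the translate $g\cdot(\text{axis of }a_2)$, which contains no $a_1$-edge, so $(e;\eta_{\langle a_2\rangle})_\alpha=0$; hence $\mu+n\,\eta_{\langle a_2\rangle}\in\CC$ for all $n$, while $(e_{a_2};\mu+n\,\eta_{\langle a_2\rangle})_\alpha=(e_{a_2};\mu)_\alpha+n\to\infty$, so $\CC$ is not compact. (For the same reason, your opening assertion that the translates of the single cylinder $\mathcal SCyl_\alpha(\widetilde e)$ cover $\mathfrak C_N$ is false; one needs the translates of the cylinders of all edges of $\Gamma$.) The repair is exactly what the paper does: take $\CC=\{\mu':\sum_{e\in E_{top}\Gamma}(e;\mu')_\alpha\le C\}$ with $C=\sum_{e\in E_{top}\Gamma}(e;\mu)_\alpha+1$. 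Every finite non-degenerate subtree of $X$ contains a lift of some edge of $\Gamma$, so every weight of every $\mu'\in\CC$ is then bounded by $C$, and the rest of your argument goes through verbatim; your set $\Sigma$ already uses the sum over all edges, so the second half of your proof is unaffected.
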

\begin{proof}
It follows from the definition of $\gcn$ as the space of $F_N$-invariant locally finite positive Borel measures on $\mathfrak C_N$ that $\gcn$ is metrizable. Then to show local compactness it suffices to establish sequential local compactness of $\gcn$. 

Consider a marking $\alpha:F_N\isom \pi_1(\Gamma)$. It is not hard to show, using Proposition~\ref{prop:kirch},  Proposition~\ref{propdfn:weights} and a standard diagonalization argument for individual weights, that for every $C>0$ the sets
\[
\{\mu\in \gcn: \sum_{e\in E_{top}\Gamma} (\tilde e; \mu)_\alpha\le C \}
\]
and
\[
\{\mu\in \gcn: \sum_{e\in E_{top}\Gamma} (\tilde e; \mu)_\alpha= C \}
\]
are sequentially compact. This implies local compactness of $\gcn$ and compactness of $\pgcn$.
\end{proof}

\begin{rem}\label{rem:curr}
Recall also that elements of $\Curr(F_N)$ are positive locally finite $F_N$-invariant Borel measures on the space of $2$-element subsets in $\partial F_N$ which is clearly a closed $F_N$-invariant subset of $\mathfrak C_N$. 
Hence the space $\Curr(F_N)$ can be thought of as canonically embedded in $\gcn$, $\Curr(F_N)\subseteq \gcn$, and it is not hard to see that $\Curr(F_N)$ is a closed subset of $\gcn$. Moreover, once the action of $\Out(F_N)$ on $\gcn$ is defined in Section \ref{sec:action} below, it will be obvious that $\Curr(F_N)$ is an $\Out(F_N)$-invariant subset of $\gcn$. For similar reasons $\PCN\subseteq \pgcn$ is a closed $\Out(F_N)$-invariant subset.
\end{rem}

\section{Rational subset currents}\label{sec:rational}

\subsection{Counting and rational subset currents}\label{subsec:counting}

Recall that for a subgroup $H\le G$ of a group $G$ the
\emph{commensurator} or \emph{virtual normalizer} $Comm_G(H)$ of $H$ in $G$ is defined as
\[
Comm_G(H):=\{g\in G| [H: H\cap gHg^{-1}]<\infty, \text{ and } [gHg^{-1}: H\cap gHg^{-1}]<\infty\}.
\]
It is easy to see that $Comm_G(H)$ is again a subgroup of $G$ and that
$H\le Comm_G(H)$. 

Suppose now that $N\ge 2$ and $H\le F_N$ is a nontrivial subgroup of
$F_N$. The \emph{limit set} $\Lambda(H)$ of $H$ in $\partial F_N$ is
the set of all $\xi\in \partial F_N$ such that there exists a sequence
$h_n\in H$, $n\ge 1$ satisfying
\[
\lim_{n\to\infty} h_n=\xi \quad\text{ in } F_N\cup \partial F_N.
\]
We recall some elementary properties of limit sets. We see in particular that for every $H\le F_N, H\ne 1$ we have $\Lambda(H)\in
\mathfrak C_N$.

\begin{prop}\label{prop:basic}
Let $H\le F_N$ be a nontrivial subgroup. Then:
\begin{enumerate}
\item The limit set $\Lambda(H)$ is a closed  $H$-invariant subset of
  $\partial F_N$. If $H$ is infinite cyclic, $\Lambda(H)$ consists of two distinct points; if $H$ is not cyclic, $\Lambda(H)$ is infinite.
\item For any $\xi\in \Lambda(H)$ the closure of the orbit $H\xi$ in
  $\partial F_N$ is equal to $\Lambda(H)$.
\item For every $g\in G$
\[
\Lambda(gHg^{-1})=g\Lambda(H).
\]
\item If $H\le Q\le F_N$ then $\Lambda(H)\subseteq \Lambda(Q)$.
\item Either $H$ is infinite cyclic and $\Lambda(H)$ consists of
  exactly two distinct points or $H$ contains a nonabelian free
  subgroup and $\Lambda(H)$ is uncountable. 
\item Let $T\in \cvn$ and let $Conv_T(\Lambda(H))$ be the \emph{convex hull} of
  $\Lambda(H)$ in $T$, that is, the union of all bi-infinite geodesics
  in $T$ with endpoints in $\Lambda(H)$. Then  $Conv_T(\Lambda(H))=T_H$
  is the unique minimal $H$-invariant subtree of $T$.
\end{enumerate}
\end{prop}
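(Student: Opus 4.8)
The plan is to prove each of the six items in turn, most of which are standard facts about limit sets of subgroups of word-hyperbolic groups specialized to $F_N$. I would organize the proof so that later items reuse earlier ones.

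\textbf{Items (1)--(5).} For (1), closedness and $H$-invariance of $\Lambda(H)$ are immediate from the definition (a limit of limit points of $H$-orbits is again such, and $h\Lambda(H)=\Lambda(H)$ since $h h_n\to h\xi$ whenever $h_n\to\xi$). If $H=\langle g\rangle$ is infinite cyclic then $g^n\to g^{+\infty}$ and $g^{-n}\to g^{-\infty}$, and any $h_n=g^{k_n}\to\xi$ forces $|k_n|\to\infty$ along a subsequence, so $\Lambda(H)=\{g^{+\infty},g^{-\infty}\}$, two distinct points since $g\ne 1$ has north--south dynamics on $\partial F_N$. If $H$ is not cyclic, then by the Kurosh subgroup theorem / Nielsen--Schreier $H$ is free of rank $\ge 2$, hence contains a nonabelian free subgroup, which already gives (5) (infinitely many distinct axis-endpoints, in fact a Cantor set of limit points), and a fortiori (1). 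For (2): given $\xi\in\Lambda(H)$, pick $h_n\in H$ with $h_n\to\xi$; for any $\zeta\in\Lambda(H)$ realized by $q_m\in H$ with $q_m\to\zeta$, a standard hyperbolicity/ping-pong estimate shows $h_n q_m\to h_n\zeta\to\xi$ appropriately, so in fact it is cleaner to argue: $\overline{H\xi}$ is a nonempty closed $H$-invariant subset of the compact set $\Lambda(H)$, and minimality of the action of $H$ on $\Lambda(H)$ (which holds for any subgroup of a hyperbolic group acting on its own limit set, proved by the usual attracting-fixed-point argument using elements of $H$ with source/sink dynamics) forces $\overline{H\xi}=\Lambda(H)$. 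For (3): $g h_n g^{-1}\to g\xi$ in $F_N\cup\partial F_N$ since left multiplication by $g$ and right multiplication by $g^{-1}$ extend continuously to $\partial F_N$ and fix no relevant behavior at infinity that would spoil the limit — more precisely, $h_n\to\xi\in\partial F_N$ implies $g h_n g^{-1}\to g\xi$ because conjugation by $g$ is a quasi-isometry of $F_N$ whose boundary extension sends $\xi$ to $g\xi$; this gives $\Lambda(gHg^{-1})\supseteq g\Lambda(H)$, and the reverse inclusion follows by symmetry (conjugating back by $g^{-1}$). For (4): trivially a sequence in $H$ converging to $\xi$ is also a sequence in $Q$, so $\Lambda(H)\subseteq\Lambda(Q)$.

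\textbf{Item (6).} This is the substantive point. Let $T\in\cvn$. First, $Conv_T(\Lambda(H))$ is clearly $H$-invariant: $H$ preserves $\Lambda(H)$ hence permutes the bi-infinite geodesics with endpoints in $\Lambda(H)$, and since $T$ is an $\R$-tree this union is a subtree (the union of a family of geodesics whose pairwise endpoints lie in a common closed set, in an $\R$-tree, is convex — this uses that for points $\xi,\zeta,\omega\in\Lambda(H)$ the tripod they span is contained in the union of the three bi-infinite geodesics). So $Conv_T(\Lambda(H))$ contains some minimal $H$-invariant subtree $T_H$ (minimal $H$-invariant subtrees exist and are unique whenever $H$ is non-trivial and acts on an $\R$-tree without global fixed point — if $H$ is cyclic, $T_H$ is the axis of the generator; if $H$ is not cyclic, $H$ contains two hyperbolic elements with distinct axes and the standard argument produces a unique minimal invariant subtree, the union of axes of all hyperbolic elements of $H$). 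For the reverse inclusion $Conv_T(\Lambda(H))\subseteq T_H$: every hyperbolic $h\in H$ has its two fixed endpoints $h^{\pm\infty}$ in $\Lambda(H)$, and its axis $Ax_T(h)\subseteq T_H$; conversely, I claim the set of such pairs $(h^{+\infty},h^{-\infty})$, $h\in H$ hyperbolic, is dense in $\Lambda(H)\times\Lambda(H)$ minus the diagonal — this is the key lemma, proved by a ping-pong argument (given $\xi\ne\zeta$ in $\Lambda(H)$ and neighborhoods, find $h\in H$ with $h^{+\infty}$ near $\xi$ and $h^{-\infty}$ near $\zeta$ by composing suitable elements producing $\xi$ and $\zeta$ as near-limits). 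Then any bi-infinite geodesic $(\xi,\zeta)$ with $\xi,\zeta\in\Lambda(H)$ is a limit of axes $Ax_T(h_n)\subseteq T_H$, and since $T_H$ is closed in $T$, it follows that $(\xi,\zeta)\subseteq T_H$, whence $Conv_T(\Lambda(H))\subseteq T_H$. Combining, $Conv_T(\Lambda(H))=T_H$ and it is the (unique) minimal $H$-invariant subtree.

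\textbf{Expected main obstacle.} The routine parts are the formal manipulations with sequences and closed invariant sets in (1)--(5). The real work, and where I would spend the most care, is item (6) — specifically the two density/ping-pong inputs: (a) minimality of the $H$-action on $\Lambda(H)$ (used in (2) and implicitly in (6)), and (b) density of axis-endpoint pairs of hyperbolic elements of $H$ inside $\Lambda(H)^2\setminus\Delta$, which is what forces $Conv_T(\Lambda(H))$ to sit inside every minimal invariant subtree rather than merely to contain one. These are standard in the theory of hyperbolic groups and of group actions on $\R$-trees, so I would either cite the appropriate references (e.g. standard facts about convex cores / minimal subtrees for actions on $\R$-trees, and about limit sets of subgroups of hyperbolic groups) or include the short ping-pong argument, being careful that it applies uniformly whether $H$ is cyclic or contains a nonabelian free subgroup.
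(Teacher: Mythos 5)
The paper itself offers no proof of this proposition: it is introduced with ``We recall some elementary properties of limit sets'' and the items are treated as standard facts (with a pointer to \cite{KS,KM} only for the companion Proposition~\ref{prop:comm}). So there is no argument of the authors' to compare yours against; the only question is whether your reconstruction is sound. For items (1), (3), (4), (5) and (6) it is, and your identification of the two real inputs for (6) — minimality/uniqueness of the invariant subtree as the union of axes of hyperbolic elements (every nontrivial element of $H$ is hyperbolic here, since the action on $T\in\cvn$ is free and discrete, so this works even for non--finitely generated $H$), and density of axis-endpoint pairs in $\Lambda(H)\times\Lambda(H)$ minus the diagonal — is exactly right. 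One small simplification in (6): you do not need $T_H$ to be closed; if $h_n^{+\infty}\to\xi$ and $h_n^{-\infty}\to\zeta$, then any given point $p$ of the geodesic $(\zeta,\xi)$ already lies on $Ax_T(h_n)$ for all large $n$, hence in $T_H$ directly.

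The one genuine flaw is in item (2). Your appeal to ``minimality of the action of $H$ on $\Lambda(H)$, which holds for any subgroup of a hyperbolic group acting on its own limit set'' is false for elementary subgroups: if $H=\langle g\rangle$ is infinite cyclic, then $\Lambda(H)=\{g^{+\infty},g^{-\infty}\}$ and $g$ fixes \emph{both} points, so $\overline{H\xi}=\{\xi\}\neq\Lambda(H)$. (Indeed item (2) as stated in the paper is itself only correct for non-cyclic $H$; it is an overstatement, though a harmless one, since part (2) is never invoked elsewhere in the paper.) Minimality of the $H$-action on $\Lambda(H)$ — and hence your argument for (2) — is correct precisely when $H$ is non-elementary, i.e.\ contains a nonabelian free subgroup, which by Nielsen--Schreier is exactly the non-cyclic case. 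You should either restrict (2) to that case or note explicitly that the cyclic case is a (failing) exception.
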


Another useful basic fact relates limit sets of finitely
generated subgroups and their commensurators (see~\cite{KS,KM} for details).
\begin{prop}\label{prop:comm}
Let $H\le F_N$ be a nontrivial finitely generated subgroup. For a subset $Z\subseteq \partial F_N$ denote $Stab_{F_N}(Z):=\{g\in
F_N: gZ=Z\}$.
Then
\begin{enumerate}
\item $Stab_{F_N}(\Lambda(H))=Comm_{F_N}(H)$ and
  $[Comm_{F_N}(H):H]<\infty$.
\item $\Lambda(H)=\Lambda (Comm_{F_N}(H))$.
\item For $H_1=Comm_{F_N}(H)$ we have $Comm_{F_N}(H_1)=H_1$.
\item Let $L\le F_N$ such that $H\le L$. Then $[L:H]<\infty$ if and
  only if $L\le Comm_G(H)$.
\item Suppose $H=\langle g\rangle$, where $g\in F_N, g\ne 1$. Then
  $H=Comm_{F_N}(H)$ if and only if $g$ is not a proper power in $F_N$,
  that is, if and only if $H$ is a maximal infinite cyclic subgroup of
  $F_N$.
\end{enumerate}
\end{prop}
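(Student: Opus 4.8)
The plan is to derive the statement from the two basic facts that finitely generated subgroups of free groups are quasiconvex (indeed, their limit sets are "recognized" by Stallings core graphs), and that a finitely generated subgroup $H$ has finite index in its commensurator. I will treat the items essentially in the order given, establishing (1) first, since (2)--(5) follow from (1) together with elementary properties of limit sets already recorded in Proposition~\ref{prop:basic}.

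For item (1), I would argue both inclusions for $Stab_{F_N}(\Lambda(H))=Comm_{F_N}(H)$. If $g\in Comm_{F_N}(H)$, set $H_0:=H\cap gHg^{-1}$, a finite-index subgroup of both $H$ and $gHg^{-1}$. A finite-index subgroup of a nontrivial subgroup of $F_N$ has the same limit set (this is standard: any $\xi\in\Lambda(H)$ is approached by $h_n\in H$, and passing to a bounded-index coset one can replace the $h_n$ by elements of $H_0$ up to bounded error, so $\Lambda(H_0)=\Lambda(H)$; alternatively invoke Proposition~\ref{prop:basic}(1),(2)). Hence $\Lambda(H)=\Lambda(H_0)=\Lambda(gHg^{-1})=g\Lambda(H)$ by Proposition~\ref{prop:basic}(3), so $g\in Stab_{F_N}(\Lambda(H))$. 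Conversely, suppose $g\Lambda(H)=\Lambda(H)$. Here I would use the core-graph description: let $\Delta=\Delta_A(H)$ be the Stallings core graph of $H$ with respect to a basis $A$; the limit set $\Lambda(H)$, viewed in $\partial X_A$, is precisely the set of ends of the covering $\widetilde\Delta$ corresponding to $H$, and $g\Lambda(H)$ is the analogous set for $gHg^{-1}$. The condition $g\Lambda(H)=\Lambda(H)$ forces the cores of $H$ and $gHg^{-1}$ to have a common finite cover (take the pullback / fiber product of the two covers of the rose, restricted to the component carrying the basepoint image), and a finitely generated subgroup whose core graph finitely covers that of $H$ has finite index in $Comm_{F_N}(H)$; more directly, $H\cap gHg^{-1}$ corresponds to that common cover and is finite-index in $H$, whence $g\in Comm_{F_N}(H)$. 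Finally $[Comm_{F_N}(H):H]<\infty$: since $Comm_{F_N}(H)=Stab_{F_N}(\Lambda(H))$ acts on $T_H=Conv_{X_A}(\Lambda(H))$ (Proposition~\ref{prop:basic}(6)), and $H$ acts cocompactly on $T_H$ (finite generation, via the core graph $\Delta_A(H)$ being finite), the overgroup $Comm_{F_N}(H)$ also acts cocompactly and properly discontinuously on $T_H$, so the index is finite by comparing covolumes of $\Delta_A(H)$ and of the quotient graph for $Comm_{F_N}(H)$.

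Items (2)--(5) are then quick corollaries. For (2), $\Lambda(Comm_{F_N}(H))\supseteq\Lambda(H)$ by Proposition~\ref{prop:basic}(4), and the reverse inclusion holds because $H$ is finite-index in $Comm_{F_N}(H)$ and finite-index subgroups have equal limit sets. For (3), by (1) applied to $H_1=Comm_{F_N}(H)$ we get $Comm_{F_N}(H_1)=Stab_{F_N}(\Lambda(H_1))=Stab_{F_N}(\Lambda(H))=Comm_{F_N}(H)=H_1$, using (2). For (4), if $[L:H]<\infty$ then $L$ commensurates $H$ (each $g\in L$ has $H\cap gHg^{-1}$ of finite index in $H$ since it contains the core of a finite-index subgroup), so $L\le Comm_{F_N}(H)$; conversely if $L\le Comm_{F_N}(H)$ then, $L$ being finitely generated and $\Lambda(L)\subseteq\Lambda(Comm_{F_N}(H))=\Lambda(H)$ (by (2)) while also $\Lambda(L)\supseteq\Lambda(H)$ (as $H\le L$), we get $\Lambda(L)=\Lambda(H)$, hence $L$ acts cocompactly on $T_H$ and $[L:H]<\infty$ by the covolume comparison as in (1). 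For (5), $H=\langle g\rangle$ with $g$ not a proper power means $H$ is a maximal cyclic subgroup; any $u\in Comm_{F_N}(H)$ satisfies $u\langle g\rangle u^{-1}\cap\langle g\rangle\ne 1$, so $ug^ku^{-1}=g^m$ for some $k,m\ne 0$; comparing translation lengths on $X_A$ forces $|k|=|m|$, so $ug^ku^{-1}=g^{\pm k}$, and in a free group this implies $u$ lies in the centralizer of $g$, which equals $\langle g\rangle$ by maximality (since $g$ is not a proper power); hence $Comm_{F_N}(H)=H$. Conversely, if $g=w^d$ with $d\ge 2$ then $\langle w\rangle\ge\langle g\rangle$ with index $d<\infty$, so $w\in Comm_{F_N}(H)\setminus H$.

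The main obstacle is the harder inclusion in (1), namely showing $Stab_{F_N}(\Lambda(H))\subseteq Comm_{F_N}(H)$: one must pass from the purely topological equality $g\Lambda(H)=\Lambda(H)$ of limit sets to the algebraic finiteness statement $[H:H\cap gHg^{-1}]<\infty$. The cleanest route is through Stallings core graphs and the fiber-product (pullback) construction, identifying $H\cap gHg^{-1}$ with $\pi_1$ of the basepoint-component of $\Delta_A(H)\times_{R_N}\Delta_A(gHg^{-1})$ and observing that equality of limit sets forces this component to be a finite graph surjecting onto $\Delta_A(H)$ with bounded fiber. Since the paper cites \cite{KS,KM} for Proposition~\ref{prop:comm}, I would in the write-up simply record the statement as known and refer to those sources rather than reproving it in full; the sketch above indicates why it is true.
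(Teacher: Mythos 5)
The paper offers no proof of this proposition: it is stated as a ``useful basic fact'' with a pointer to \cite{KS,KM}, so there is no in-paper argument to compare yours against. Your sketch is the standard one and is correct in outline; in particular you correctly isolate the hard inclusion $Stab_{F_N}(\Lambda(H))\subseteq Comm_{F_N}(H)$, and the route through the coincidence of the convex hulls $X_H=X_{gHg^{-1}}$ followed by a covering-degree (covolume) comparison on the finite quotient graphs is the right one; the deduction of (2), (3) and the finiteness $[Comm_{F_N}(H):H]<\infty$ from it is also fine. Two small points should be tightened. In item (5), from $ug^ku^{-1}=g^{\pm k}$ you cannot immediately say that $u$ ``lies in the centralizer of $g$'': you must first rule out the case $ug^ku^{-1}=g^{-k}$, which is impossible in a free group (it would force $\langle u,g^k\rangle$ to satisfy the Klein-bottle relation, hence to be cyclic, hence $g^{2k}=1$), and only then invoke that the centralizer of $g^k$ is the maximal cyclic subgroup $\langle g\rangle$ when $g$ is not a proper power. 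In item (4) the converse direction does not require $L$ to be finitely generated, which is not among the hypotheses and which your argument via $\Lambda(L)$ tacitly assumes: once $[Comm_{F_N}(H):H]<\infty$ is known from (1), the chain $H\le L\le Comm_{F_N}(H)$ gives $[L:H]\le[Comm_{F_N}(H):H]<\infty$ directly, so the detour through limit sets and cocompactness is unnecessary. With those repairs the sketch is complete, and deferring the full details to \cite{KS,KM} is exactly what the paper itself does.
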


\begin{defn}\label{defn:count}
Let $H\le F_N$ be a nontrivial finitely generated subgroup. 

\begin{enumerate}
\item Suppose first that $H=Comm_{F_N}(H)$. Define the measure $\eta_H$ on $\mathfrak C_N$ as
\[
\eta_H:=\sum_{H_1\in [H]} \delta_{\Lambda(H_1)},
\]
where $[H]$ is the conjugacy class of $H$ in $F_N$.
\item Now let $H\le F_N$ be an arbitrary nontrivial finitely generated
  subgroup. Put $H_0:=Comm_{F_N}(H)$ and let $m:=[H_0:H]$.
  Proposition~\ref{prop:comm} implies that $m<\infty$ and that
  $H_0=Comm_{F_N}(H_0)$. Then put $\eta_H:=m\eta_{H_0}$.
\end{enumerate}
\end{defn}

\begin{lem}\label{lem:count}
Let $H\le F_N$ be a nontrivial finitely generated subgroup. Then
$\eta_H\in \gcn$.
\end{lem}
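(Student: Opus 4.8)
The plan is to verify that $\eta_H$ is a positive $F_N$-invariant locally finite Borel measure on $\mathfrak C_N$, treating the two cases of Definition~\ref{defn:count} and reducing the second to the first. Since the second clause defines $\eta_H=m\eta_{H_0}$ as a nonnegative scalar multiple of $\eta_{H_0}$, and since $\gcn$ is closed under multiplication by nonnegative scalars, it suffices to handle the case $H=Comm_{F_N}(H)$; so for the rest of the argument I assume $H=Comm_{F_N}(H)$ and work with $\eta_H=\sum_{H_1\in[H]}\delta_{\Lambda(H_1)}$.

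First I would check that $\eta_H$ is a well-defined Borel measure. Each $H_1\in[H]$ is a nontrivial subgroup, so by Proposition~\ref{prop:basic}(1) we have $\Lambda(H_1)\in\mathfrak C_N$, hence $\delta_{\Lambda(H_1)}$ is a Borel probability measure on $\mathfrak C_N$; a countable sum of such is a well-defined (possibly infinite) positive Borel measure on $\mathfrak C_N$. Next, $F_N$-invariance: for $g\in F_N$ and a Borel set $B\subseteq\mathfrak C_N$, the translation action satisfies $g\cdot\delta_{\Lambda(H_1)}=\delta_{g\Lambda(H_1)}=\delta_{\Lambda(gH_1g^{-1})}$ by Proposition~\ref{prop:basic}(3); since conjugation by $g$ permutes the set $[H]$ of conjugates of $H$, the sum $\sum_{H_1\in[H]}\delta_{\Lambda(H_1)}$ is invariant under this permutation, so $g\eta_H=\eta_H$.

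The main point — and the step I expect to require the most care — is local finiteness: I must show $\eta_H(C)<\infty$ for every compact $C\subseteq\mathfrak C_N$. By Proposition~\ref{prop:topSN}, the subset cylinders $\mathcal SCyl_\alpha(K)$ form a basis of compact open sets, so by compactness it suffices to show $\eta_H(\mathcal SCyl_\alpha(K))<\infty$ for every finite non-degenerate subtree $K$ of $X=\widetilde\Gamma$, where we fix the marking $\alpha$ coming from a free basis $A$ (so $X=X_A$ is the Cayley tree). Now $\eta_H(\mathcal SCyl_\alpha(K))$ counts the conjugates $H_1\in[H]$ with $\Lambda(H_1)\in\mathcal SCyl_\alpha(K)$, i.e. those $H_1$ whose limit set is contained in $\cup_i Cyl_X(e_i)$ and meets each $Cyl_X(e_i)$, where $e_1,\dots,e_n$ are the terminal edges of $K$. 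The condition $\Lambda(H_1)\in\mathcal SCyl_\alpha(K)$ forces the convex hull $T_{H_1}=Conv_X(\Lambda(H_1))$ (Proposition~\ref{prop:basic}(6)) to contain $K$; in particular $K$ sits inside the minimal $H_1$-invariant subtree. Writing $H_1=g H g^{-1}$, this is equivalent to $g^{-1}K\subseteq T_H$, i.e. $g^{-1}$ moves $K$ into the fixed finite-valence subtree $T_H\subseteq X$. The key finiteness input is that $[Comm_{F_N}(H):H]<\infty$ together with $H=Comm_{F_N}(H)$ implies $Stab_{F_N}(\Lambda(H))=H$ (Proposition~\ref{prop:comm}(1)), so the map $g\mapsto gHg^{-1}$ from $F_N/H$ to $[H]$ is a bijection; thus I need to bound the number of cosets $gH$ such that $g^{-1}K$ (equivalently some translate of $K$ under $H$) lies in $T_H$ and satisfies the cylinder conditions. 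Since $K$ has a fixed number of vertices and $T_H$ is locally finite, the subtree $T_H$ contains only finitely many distinct copies of $K$ up to the $H$-action (because $T_H/H$ is a finite graph — $H$ is finitely generated), and each contributes finitely many cosets; hence the count is finite. I would organize this as: (i) $\Lambda(gHg^{-1})\in\mathcal SCyl_\alpha(K)\iff g^{-1}K$ embeds in $T_H$ as a subtree whose terminal edges point into the appropriate cylinders; (ii) the $H$-action on $T_H$ has finite quotient, so there are finitely many $H$-orbits of such embeddings; (iii) the stabilizer in $H$ of each such embedded copy is trivial (the $H$-action on $T_H$ is free, as $H\le F_N$ acts freely on $X$), so each orbit accounts for a coset set in bijection with a single... actually each $H$-orbit of embeddings corresponds to exactly one coset $gH$. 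Combining, $\eta_H(\mathcal SCyl_\alpha(K))$ is finite. Once local finiteness is established, all three defining properties hold and $\eta_H\in\gcn$, completing the proof; for the general case $\eta_H=m\eta_{H_0}\in\gcn$ as noted at the outset.
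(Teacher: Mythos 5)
Your proposal is correct and follows essentially the same route as the paper: reduce to the case $H=Comm_{F_N}(H)$, get $F_N$-invariance from $\Lambda(gH_1g^{-1})=g\Lambda(H_1)$, reduce local finiteness to subset cylinders, observe that $\Lambda(gHg^{-1})\in\mathcal SCyl_\alpha(K)$ forces $K\subseteq gX_H$, and then bound the count using finiteness of the quotient graph $H\setminus X_H$. The only (immaterial) difference is that the paper tracks a single base vertex of $K$ inside the translates $gX_H$ rather than $H$-orbits of embedded copies of all of $K$.
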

\begin{proof}
It is enough to show that $\eta_H\in \gcn$ for the case where
$H=Comm_{F_N}(H)$. Thus we assume that $H\le F_N$ is a nontrivial
finitely generated subgroup with $H=Comm_{F_N}(H)$. Fix a free basis
$A$ of $F_N$ and let $X$ be the Cayley graph of $F_N$ with respect to $A$.

Part~(3) of Proposition~\ref{prop:basic} implies that 
$\eta_H$ is an $F_N$-invariant positive Borel measure on $\mathfrak C_N$.
Thus it remains to check that $\eta_H(C)<\infty$ for every compact
$C\subseteq \mathfrak C_N$. Every compact subset of $\mathfrak C_N$ is
the union of finitely many cylinder subsets. Thus we only need to
check that $\eta_H(\mathcal SCyl_{X}(K))<\infty$ for every finite
non-degenerate subtree $K$ of $X$.  After replacing $K$ by its $F_N$-translate, we may assume that
the element $1\in F_N$ is a vertex of $K$. Also, since $\eta_H$
depends only on the conjugacy class of $H$, after replacing $H$ by a
conjugate we may assume that $1\in X_H=Conv_{X}(\Lambda(H))$.

Recall from Proposition \ref{prop:basic} that $Conv_X(\Lambda(H))=X_H$ is the unique minimal
$H$-invariant subtree of $X$. Whenever $g\in F_N$ is such that
$g\Lambda(H)=\Lambda(gHg^{-1})\in \mathcal SCyl_X(K)$, we have $1\in
Conv_X(\Lambda(H))=X_H$. 
Thus it suffices to show that the number of distinct translates $gX_H$
of $X_H$ that contain $1\in F_N$ is finite.

It is not hard to see, since by assumption $1\in X_H$, that for $g\in
F_N$ we have $1\in gX_H$ if and only if $g\in V(X_H)\subseteq F_N$.

Since $X_H$ is the minimal $H$-invariant subtree and $H$ is finitely
generated, the quotient $H\setminus X_H$ is a finite graph. In particular
$H\setminus V(X_H)$ is a finite set. Every $H$-orbit of a vertex of $X_H$ is a
coset class $uH$ for some $u\in F_N$. Thus there exists a finite
set $u_1,\dots, u_m\in F_N$ such that $V(X_H)=\{u_ih| h\in H, 1\le
i\le m\}$. For $g=u_ih$, where $h\in H$, $1\le i\le m$, we have
$gX_H=u_ihX_H=u_iX_H$. Thus indeed there are only finitely many
translates of $X_H$ that contain $1\in F_N$. Therefore
$\eta_H(\mathcal SCyl_X(K))<\infty$, as required. 

\end{proof}

\begin{defn}[Rational and counting currents]
For a nontrivial finitely generated $H\le F_N$, we call
$\eta_H\in\gcn$ given by Definition \ref{defn:count} and Lemma \ref{lem:count} the \emph{counting subset current}
associated to $H$.

A subset current $\mu\in\gcn$ is called \emph{rational} if
$\mu=r\eta_H$ for some $r\ge 0$ and some nontrivial finitely generated
subgroup $H\le F_N$.
\end{defn}

Definition~\ref{defn:count} directly implies:
\begin{prop}
Let $H\le F_N$ be a nontrivial finitely generated subgroup and let $H'=gHg^{-1}$ for some $g\in F_N$.
Then $\eta_H=\eta_{H'}$.
\end{prop}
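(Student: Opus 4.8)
The plan is to simply unwind Definition~\ref{defn:count} and exploit the conjugation-equivariance of all the ingredients involved: the commensurator, the index, the limit set, and the conjugacy class of a subgroup. The statement should reduce to pure bookkeeping once one observes that conjugation by a fixed element of $F_N$ is an automorphism of $F_N$ that commutes with all of these operations.

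First I would record the elementary fact that for any $g\in F_N$ and any subgroup $H\le F_N$ one has $Comm_{F_N}(gHg^{-1})=g\,Comm_{F_N}(H)\,g^{-1}$. This is immediate from the definition of the commensurator, since conjugation by $g$ is an automorphism of $F_N$ and the defining finiteness conditions $[H:H\cap kHk^{-1}]<\infty$, $[kHk^{-1}:H\cap kHk^{-1}]<\infty$ are preserved under replacing $(H,k)$ by $(gHg^{-1}, gkg^{-1})$. In particular, if $H=Comm_{F_N}(H)$, then $H'=gHg^{-1}$ also satisfies $H'=Comm_{F_N}(H')$, so both $\eta_H$ and $\eta_{H'}$ are given by the first clause of Definition~\ref{defn:count}.

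Next I would treat this commensurator-closed case. Here $\eta_{H'}=\sum_{H_1\in[H']}\delta_{\Lambda(H_1)}$, where $[H']$ denotes the conjugacy class of $H'$ in $F_N$. Since $H'=gHg^{-1}$ is conjugate to $H$, the two conjugacy classes coincide, $[H']=[H]$, so the sum defining $\eta_{H'}$ is literally the same sum as the one defining $\eta_H$; hence $\eta_{H'}=\eta_H$. (If one prefers to see the limit sets explicitly, Proposition~\ref{prop:basic}(3) gives $\Lambda(gH_1g^{-1})=g\Lambda(H_1)$, but this is not even needed since we are summing over the whole conjugacy class.)

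Finally I would handle the general case by the reduction already built into Definition~\ref{defn:count}. Set $H_0:=Comm_{F_N}(H)$ and $m:=[H_0:H]$, which is finite by Proposition~\ref{prop:comm}, so that $\eta_H=m\,\eta_{H_0}$. By the equivariance of the commensurator, $Comm_{F_N}(H')=gH_0g^{-1}=:H_0'$, and conjugation by $g$ induces a bijection of the coset spaces $H_0/H\to H_0'/H'$, whence $[H_0':H']=[H_0:H]=m$ and $\eta_{H'}=m\,\eta_{H_0'}$. Since $H_0'=gH_0g^{-1}$ is conjugate to $H_0$ and is commensurator-closed, the case treated in the previous paragraph yields $\eta_{H_0'}=\eta_{H_0}$. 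Combining, $\eta_{H'}=m\,\eta_{H_0'}=m\,\eta_{H_0}=\eta_H$. I do not expect any genuine obstacle here; the only step needing a sentence of justification is the identity $Comm_{F_N}(gHg^{-1})=g\,Comm_{F_N}(H)\,g^{-1}$, and that is routine.
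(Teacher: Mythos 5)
Your proof is correct and is exactly the argument the paper has in mind: the paper simply asserts that the proposition follows directly from Definition~\ref{defn:count}, and your unwinding (conjugation-equivariance of the commensurator, preservation of the index $m=[H_0:H]$, and the observation that conjugate subgroups have the same conjugacy class so the defining sums coincide) is the intended bookkeeping. No gaps.
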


The above statement has a partial converse:
\begin{prop}
Let $H,H'\le F_N$ be nontrivial finitely generated subgroups such that $H=Comm_{F_N}(H)$ and $H'=Comm_{F_N}(H')$.
Then $\eta_H=\eta_{H'}$ if and only if $[H]=[H']$. 
\end{prop}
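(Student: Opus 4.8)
The plan is to prove both implications. The forward direction is the only substantial one; the reverse follows immediately from the preceding proposition (conjugate subgroups have equal counting currents, and $H=H'$ up to conjugacy certainly gives $\eta_H=\eta_{H'}$).

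For the forward direction, suppose $\eta_H=\eta_{H'}$ with $H=Comm_{F_N}(H)$ and $H'=Comm_{F_N}(H')$. By Definition~\ref{defn:count}(1), $\eta_H=\sum_{H_1\in[H]}\delta_{\Lambda(H_1)}$ and similarly for $H'$. First I would observe that the support of $\eta_H$ is exactly the set $\{\Lambda(H_1): H_1\in[H]\}$, i.e. the set of limit sets of conjugates of $H$ --- this uses that each $\delta_{\Lambda(H_1)}$ is a genuine atom and, crucially, that distinct conjugates of $H$ can have the \emph{same} limit set (if $H_1=gHg^{-1}$ and $\Lambda(H_1)=\Lambda(H)$ then by Proposition~\ref{prop:comm}(1) $g\in Stab_{F_N}(\Lambda(H))=Comm_{F_N}(H)=H$, forcing $H_1=H$); so the map $H_1\mapsto\Lambda(H_1)$ from $[H]$ to $\mathfrak C_N$ is injective and $\eta_H$ is a sum of distinct unit atoms. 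Hence from $\eta_H=\eta_{H'}$ we conclude $\{\Lambda(H_1): H_1\in[H]\}=\{\Lambda(H_1'): H_1'\in[H']\}$ as subsets of $\mathfrak C_N$. In particular $\Lambda(H)=\Lambda(H_1')$ for some conjugate $H_1'=gH'g^{-1}$ of $H'$. Then $Stab_{F_N}(\Lambda(H))=Comm_{F_N}(H)=H$, while $Stab_{F_N}(\Lambda(H_1'))=Comm_{F_N}(H_1')=gComm_{F_N}(H')g^{-1}=gH'g^{-1}=H_1'$; since $\Lambda(H)=\Lambda(H_1')$ the two stabilizers coincide, so $H=H_1'=gH'g^{-1}$, i.e. $[H]=[H']$, as desired.

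The one point requiring a little care --- and the likely main obstacle --- is the claim that the measure $\eta_H$, written as a possibly infinite sum of Dirac masses, has support precisely equal to $\{\Lambda(H_1):H_1\in[H]\}$ with no accumulation phenomena confusing the picture. Here one uses the local finiteness established in Lemma~\ref{lem:count}: on any subset cylinder only finitely many of the atoms $\Lambda(H_1)$ land inside, so $\eta_H$ is a locally finite sum of unit atoms, its atoms are exactly the points $\Lambda(H_1)$, and each has mass exactly $1$ (by the injectivity of $H_1\mapsto\Lambda(H_1)$ noted above). Two such measures are equal iff they have the same atoms, which gives the set equality used above. I would present this as a short lemma-free argument inline, citing Proposition~\ref{prop:comm}(1),(3) for the stabilizer computations and Lemma~\ref{lem:count} for local finiteness.

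Thus the proof structure is: (i) reverse implication is immediate from the previous proposition; (ii) for the forward implication, identify the atoms of $\eta_H$ and $\eta_{H'}$, each of mass one, indexed injectively by the respective conjugacy classes; (iii) equality of the measures forces equality of the atom sets, hence $\Lambda(H)=\Lambda(gH'g^{-1})$ for some $g$; (iv) apply $Stab_{F_N}(\Lambda(\cdot))=Comm_{F_N}(\cdot)$ together with the hypotheses $H=Comm_{F_N}(H)$, $H'=Comm_{F_N}(H')$ to conclude $H=gH'g^{-1}$.
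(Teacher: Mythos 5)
Your proof is correct and follows essentially the same route as the paper's: both reduce the forward implication to showing $\Lambda(H)=\Lambda(gH'g^{-1})$ for some $g\in F_N$ and then conclude via $Stab_{F_N}(\Lambda(\cdot))=Comm_{F_N}(\cdot)$ together with the hypotheses $H=Comm_{F_N}(H)$, $H'=Comm_{F_N}(H')$. The only (inessential) difference is that the paper isolates $\Lambda(H)$ by shrinking subset cylinders around it away from the finitely many other relevant translates, while you evaluate the locally finite measures directly on singletons; also note that your phrase ``distinct conjugates of $H$ \emph{can} have the same limit set'' should clearly read ``cannot,'' as your own parenthetical argument shows.
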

\begin{proof}
We have already seen that if $[H]=[H']$ then $\eta_H=\eta_{H'}$.

Suppose now that $\eta_H=\eta_{H'}$.  Choose a marking $\alpha:F_N\to\pi_1(\Gamma)$ on $F_N$. Let $X=\widetilde \Gamma$.

Let $K\in\mathcal K_\Gamma$ be such that $\Lambda(H)\in \mathcal SCyl_\alpha(K)$. Since $\eta_H(\mathcal SCyl_\alpha(K))<\infty$ and $\eta_{H'}(\mathcal SCyl_\alpha(K))<\infty$,  Definition~\ref{defn:count} implies that only finitely many distinct $F_N$-translates of $\Lambda(H)$ and $\Lambda(H')$ belong to $\mathcal SCyl_\alpha(K)$.  Let these translates be $g_1\Lambda(H), \dots, g_m \Lambda(H)$ and $f_1\Lambda(H'), \dots, f_t\Lambda(H')$, with $g_1=1$. Thus we have $m+t$ distinguished points in the open set $\mathcal SCyl_\alpha(K)$. Since $\mathfrak C_N$ is metrizable and Hausdorff, we can find an open subset $V$ of $\mathcal SCyl_\alpha(K)$ such that $V$ contains exactly one of these $m+t$ points, namely the point $g_1\Lambda(H)=\Lambda(H)$. Since the cylinder subsets form a basis of open sets for $\mathfrak C_N$,  there exists a finite subtree $K_1$ of $X$  such that $\Lambda(H)\in \mathcal SCyl_\alpha(K_1)\subseteq V$. Thus  $\Lambda(H)\in \mathcal SCyl_\alpha(K_1)$ and no $F_N$-translate of $\Lambda(H), \Lambda(H')$, distinct from $\Lambda(H)$, belongs to $\mathcal SCyl_\alpha(K_1)$. Then $\eta_H(\mathcal SCyl_\alpha(K_1))=1$. By assumption $\eta_H=\eta_{H'}$ and hence $\eta_{H'}(\mathcal SCyl_\alpha(K_1))=1$.  Thus, by definition of $\eta_{H'}$,  there is a unique translate $g\Lambda(H')$, where $g\in F_N$, such that $g\Lambda(H')\in \mathcal SCyl_\alpha(K_1)$. By the choice of $K_1$, the only $F_N$-translate of $\Lambda(H), \Lambda(H')$ contained in $\mathcal SCyl_\alpha(K_1)$ is $\Lambda(H)$. Therefore $\Lambda(H)=g\Lambda(H')$ so that  $\Lambda(H)=g\Lambda(H')=\Lambda(gH'g^{-1})$. 

Since $H=Comm_{F_N}(H)$ and $gH'g^{-1}=Comm_{F_N}(gH'g^{-1})$, Proposition~\ref{prop:comm} implies that
\[
H=Stab_{F_N}(\Lambda(H))=Stab_{F_N}( \Lambda(gH'g^{-1}) )=gH'g^{-1},
\]
and $[H]=[H']$, as required.
\end{proof}

\subsection{$\Gamma$-graphs}\label{subsec:graphs}

We need to introduce some terminology slightly generalizing the
standard set-up of Stallings foldings for subgroups of free
groups (see \cite{Sta,KM}). That set-up usually involves choosing a free basis $A$ of
$F_N$ and considering graphs whose edges are labelled by elements of
$A^{\pm 1}$. Choosing a free basis $A$ of $F_N$ corresponds to a
marking on $F_N$ that identifies $F_N$ with the fundamental group of the standard $N$-rose $R_N$. We need to
relax the requirement that the graph in question be $R_N$.

\begin{defn}[$\Gamma$-graphs]\label{defn:gammagraph}
Let $\Gamma$ be a finite connected graph without degree-one and
degree-two vertices. A \emph{$\Gamma$-graph} is a graph $\Delta$
together with a graph morphism $\tau:\Delta\to\Gamma$.

For a vertex $x\in V\Delta$ we say that the \emph{type} of $x$ is the
vertex $\tau(x)\in V\Gamma$. Similarly, for an oriented edge $e\in
E\Gamma$ the \emph{type} of $e$, or the \emph{label} of $e$ is the
edge $\tau(e)$ of $\Gamma$.
\end{defn}

Note that every covering of $\Gamma$ has a canonical $\Gamma$-graph
structure. In particular, $\Gamma$ itself is a $\Gamma$-graph and so
is the universal cover $\widetilde\Gamma$ of $\Gamma$. Also, every
subgraph of a $\Gamma$-graph is again a $\Gamma$-graph.

Note that for a graph $\Delta$ a graph-morphism $\tau:\Delta\to\Gamma$
can be uniquely specified by assigning a label $\tau(e)\in E\Gamma$
for every $e\in E\Delta$ in such a way that
$\tau(e^{-1})=(\tau(e))^{-1}$ and such that for every pair of edges
$e',e'\in E\Delta$ with $o(e)=o(e')$ we have
$o(\tau(e))=o(\tau(e'))\in V\Gamma$. Thus we usually will think of a
$\Gamma$-graph structure on $\Delta$ as such an assignment of labels
$\tau: E\Delta\to E\Gamma$. Also, by abuse of notation, we will often
refer to a graph $\Delta$ as a $\Gamma$-graph, assuming that the
graph morphism $\tau:\Delta\to\Gamma$ is implicitly specified.


\begin{defn}[$\Gamma$-graph morphism]
Let $\tau_1:\Delta_1\to\Gamma$ and $\tau_2:\Delta_2\to\Gamma$ be $\Gamma$-graphs. A graph morphism
$f:\Delta_1\to\Delta_2$ is called a \emph{$\Gamma$-graph morphism}, if it respects the labels of vertices and
edges, that is if $\tau_1=\tau_2\circ f$.
\end{defn}

\begin{defn}[Folded $\Gamma$-graph]
Let $\Delta$ be a $\Gamma$-graph. For a vertex $x\in V\Delta$ denote
by $Lk_\Delta(x)$ (or just by $Lk(x)$) the {\it link} of $x$, namely the function
\[
Lk_\Delta(x): E\Gamma\to\mathbb Z_{\ge 0}
\]
where for every $e\in E\Gamma$ the value
$\left(Lk_\Delta(x)\right)(e)$ is the number of edges of $\Delta$ with
origin $x$ and label $e$.

A $\Gamma$-graph $\Delta$ is said to be \emph{folded} if for every
vertex $x\in V\Delta$ and every $e\in E\Gamma$ we have
\[
\left(Lk_\Delta(x)\right)(e)\le 1.
\]

If $\Delta$ is folded, we will also think of $Lk_\Delta(x)$ as a
subset of $E\Gamma$ consisting of all those $e\in E\Gamma$ with
$\left(Lk_\Delta(x)\right)(e)=1$, that is, of all $e\in E\Gamma$ such
that there is an edge in $\Delta$ with origin $x$ and label $e$.

\end{defn}

The following is an immediate corollary of the definitions.
\begin{lem}
Let $\Delta$ be a $\Gamma$-graph and let $\tau:\Delta\to\Gamma$ be the
associated graph morphism. Then:
\begin{enumerate}
\item $\Delta$ is folded if and only if $\tau$ is an immersion. 

\item Suppose $\Delta$ is connected. Then $\Delta$ is a covering
of $\Gamma$ if and only if $Lk_\Delta(x)=Lk_\Gamma(\tau(x))$ for every
$x\in V\Delta$.

\item Let $\Delta_1,\Delta_2$ be two $\Gamma$-graphs such that
  $\Delta_1$ is connected and $\Delta_2$ is folded. Let $x_1\in
  V\Delta_1$, $x_2\in V\Delta_2$ be vertices of the same type $x\in
  V\Gamma$. Then there exists at most one $\Gamma$-graph morphism
  $f:\Delta_1\to\Delta_2$ such that $f(x_1)=x_2$.
\end{enumerate}
\end{lem}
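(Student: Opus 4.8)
The plan is to handle the three parts in order, each one reducing to unwinding the definitions of \emph{immersion}, \emph{covering} and \emph{folded} in terms of the link maps. For part (1), recall that a graph morphism $\tau:\Delta\to\Gamma$ is an immersion precisely when, for every $x\in V\Delta$, the induced map $\tau$ on the set of oriented edges with origin $x$ is injective (this is the combinatorial form of ``local injectivity''). First I would observe that, for a fixed $x$, this injectivity is equivalent to saying that no two distinct edges of $\Delta$ with origin $x$ carry the same label, i.e. $\bigl(Lk_\Delta(x)\bigr)(e)\le 1$ for every $e\in E\Gamma$. Ranging over all $x\in V\Delta$ gives the claim.

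For part (2), using part (1) a connected $\Gamma$-graph $\Delta$ is a covering of $\Gamma$ iff $\tau$ is a local homeomorphism, i.e. for every $x\in V\Delta$ the map $\tau$ restricts to a bijection from the set of edges with origin $x$ onto the set of edges with origin $\tau(x)$; this is exactly the condition $Lk_\Delta(x)=Lk_\Gamma(\tau(x))$ (as in the statement, we read this as equality of the integer-valued functions on $E\Gamma$, which, since the covering condition forces $\Delta$ to be folded, is the same as equality of the corresponding subsets of $E\Gamma$). Conversely, if $Lk_\Delta(x)=Lk_\Gamma(\tau(x))$ for all $x$, then $\tau$ is a local homeomorphism of $1$-complexes; since $\Delta$ is connected and $\Gamma$ is a finite connected graph, I would invoke standard covering-space theory for graphs to conclude $\tau$ is a covering map, spelling this out only briefly: the image $\tau(\Delta)$ is open and closed in $\Gamma$, hence equals $\Gamma$, and a small star around each point of $\Gamma$ is evenly covered.

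For part (3), this is the uniqueness-of-lifts argument. Suppose $f,f':\Delta_1\to\Delta_2$ are $\Gamma$-graph morphisms with $f(x_1)=f'(x_1)=x_2$. Since $\Delta_2$ is folded, $\tau_2$ is an immersion by part (1). If $v\in V\Delta_1$ satisfies $f(v)=f'(v)$ and $e\in E\Delta_1$ has $o(e)=v$, then $f(e)$ and $f'(e)$ are edges of $\Delta_2$ with common origin $f(v)=f'(v)$ and common label $\tau_2(f(e))=\tau_1(e)=\tau_2(f'(e))$; folding of $\Delta_2$ forces $f(e)=f'(e)$, whence $f(t(e))=t(f(e))=t(f'(e))=f'(t(e))$. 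Since $\Delta_1$ is connected, every vertex of $\Delta_1$ is joined to $x_1$ by an edge-path, so induction on the length of such a path gives $f=f'$ on $V\Delta_1$; the same displayed computation yields $f=f'$ on $E\Delta_1$, hence $f=f'$.

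The only point requiring genuine care is the covering-space claim in the converse direction of part (2): one must recall that a graph morphism inducing a bijection on every link, with connected source and target, is a covering, and that the hypothesis that $\Gamma$ has no vertices of degree one or two plays no role here. Everything else is a direct unwinding of the definitions, consistent with the statement being labelled an immediate corollary.
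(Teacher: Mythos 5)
Your proof is correct, and since the paper states this lemma without proof (calling it an immediate corollary of the definitions), your unwinding of the definitions of \emph{folded}, \emph{immersion} and \emph{covering} in terms of links, together with the standard uniqueness-of-lifts induction for part (3), is exactly the intended argument. Nothing further is needed.
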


\begin{defn}[$\Gamma$-core graph]\label{defn:core}
Referring to Stallings' notion of {\it core graphs}, \cite{Sta}, we say that a finite $\Gamma$-graph $\Delta$ is a $\Gamma$-{\it core graph} if
$\Delta$ is \emph{cyclically reduced}, that is it is folded and has no degree-one and degree-zero
vertices.

\end{defn}

Informally, we think of  $\Gamma$- core graphs as
generalizations of cyclic words, over the \lq\lq alphabet\rq\rq\ $\Gamma$. We
need the following analog of the notion of a subword in this context:

\begin{defn}[Occurrence]\label{defn:occur}
Let $K\subseteq \widetilde \Gamma$ be a finite non-degenerate subtree, considered together with the
canonical $\Gamma$-graph structure inherited from $\widetilde \Gamma$.
Let $\Delta$ be a finite $\Gamma$-core graph. 
An \emph{occurrence} of $K$ in $\Delta$ is a $\Gamma$-graph morphism
$f:K\to\Delta$ such that for every vertex $x$ of $K$ of degree at
least $2$ in $K$ we have $Lk_K(x)=Lk_\Delta(f(x))$.

We denote the number of all occurrences of $K$ in $\Delta$ by $(K; \Delta)_\Gamma$, or just $(
K; \Delta)$.
\end{defn}

In topological terms, a $\Gamma$-graph morphism $f:K\to\Delta$ is an occurrence
of $K$ in $\Delta$ if $f$ is an immersion and if $f$ is a covering map
at every point $x\in K$ (including interior points of edges) except
for the degree-1 vertices of $K$. That is, for every $x\in K$, other
than a degree-1 vertex of $K$, $f$ maps a small neighborhood of $x$ in
$K$ homeomorphically \emph{onto} a small neighborhood of $f(x)$ in
$\Delta$.

\begin{prop}\label{prop:wd}
Let $\alpha:F_N\to\pi_1(\Gamma)$ be a marking on $F_N$ and let
$X=\widetilde\Gamma$. As before (see \ref{prop:weights}), let $\mathcal K_\Gamma$ denote the set of
all non-degenerate finite simplicial subtrees of $X$.
Let $\Delta$ be a finite $\Gamma$-core graph.
Define $\vartheta_\Delta: \mathcal K_\Gamma\to \mathbb R$ as
\[
\vartheta_\Delta(K):=(K;\Delta)_\Gamma
\]
for each $K\in \mathcal K_\Gamma$. 

Then the function
$\vartheta_\Delta$ satisfies the conditions of
Proposition~\ref{prop:weights}. Thus there is a unique subset
current $\mu_\Delta\in\gcn$ such that for every $K\in \mathcal
K_\Delta$
\[
( K; \mu_\Delta)_\alpha=( K;\Delta)_\Gamma
\]
\end{prop}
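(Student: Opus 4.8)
The goal is to verify that $\vartheta_\Delta(K) := (K;\Delta)_\Gamma$ satisfies conditions (1) and (2) of Proposition~\ref{prop:weights}, so that the Kolmogorov-type extension machinery produces the desired current $\mu_\Delta$. Condition (1), $F_N$-invariance, is essentially automatic: for $g\in F_N$ and a finite non-degenerate subtree $K\subseteq X=\widetilde\Gamma$, left translation by $g$ is a $\Gamma$-graph automorphism of $X$ (it commutes with the covering map $\tau:X\to\Gamma$ since $F_N$ acts by deck transformations), so it identifies the canonical $\Gamma$-graph structure on $K$ with that on $gK$, and also preserves links. Hence precomposition with $g^{-1}$ gives a bijection between occurrences of $K$ in $\Delta$ and occurrences of $gK$ in $\Delta$, so $\vartheta_\Delta(gK)=\vartheta_\Delta(K)$.

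The substantive point is condition (2): for every terminal edge $e$ of $K$,
\[
(K;\Delta)_\Gamma=\sum_{U\in P_+(q(e))} (K\cup U;\Delta)_\Gamma ,
\]
where $q(e)$ is the set of oriented edges $e'$ of $X$ with $e,e'$ reduced. First I would fix an occurrence $f:K\to\Delta$ and analyze what happens at the terminal vertex $v=t(e)$ of the leaf edge $e$. Since $\Delta$ is a $\Gamma$-core graph, it has no degree-one vertices, so $Lk_\Delta(f(v))$ is a nonempty subset of $E\Gamma$; moreover the edge $f(e)^{-1}$ lies in $Lk_\Delta(f(v))$. I would set $U_f := \{ e'\in q(e) : \tau(e')\in Lk_\Delta(f(v))\smallsetminus\{\tau(e)^{-1}\}\}$, i.e.\ the pullback under $\tau$ of the \lq\lq forward\rq\rq\ directions available at $f(v)$. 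The claim to establish is: $U_f\in P_+(q(e))$ — nonemptiness here is exactly the statement that $f(v)$ has degree $\ge 2$, forced by $\Delta$ being cyclically reduced — and $f$ extends \emph{uniquely} to an occurrence $\bar f: K\cup U_f\to\Delta$, while for any $U'\in P_+(q(e))$ with $U'\neq U_f$, $f$ does \emph{not} extend to an occurrence of $K\cup U'$. The uniqueness and existence of $\bar f$ on $K\cup U_f$ comes from the fact that $\Delta$ is folded (parts of Lemma after Definition~\ref{defn:core}): each new edge $e'\in U_f$ has a prescribed label $\tau(e')\in Lk_\Delta(f(v))$, so there is exactly one edge in $\Delta$ with origin $f(v)$ and that label; assigning $e'$ to it is the only option, and the resulting map is still an immersion and still a covering at every non-leaf vertex — at $v$ we now have $Lk_{K\cup U_f}(v)=\{\tau(e)^{-1}\}\cup(\tau\text{-preimage of }U_f)$, which maps onto $Lk_\Delta(f(v))$ precisely by construction. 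Conversely, if $U'$ contains an edge $e'$ with $\tau(e')\notin Lk_\Delta(f(v))$, then no edge of $\Delta$ at $f(v)$ carries that label, so no $\Gamma$-graph morphism extending $f$ exists; and if $U'\subsetneq U_f$ (in the sense that it misses some $\tau$-class present in $Lk_\Delta(f(v))$), then at $v$ we would have $Lk_{K\cup U'}(v)\subsetneq Lk_\Delta(f(v))$, violating the covering/link condition of Definition~\ref{defn:occur}. Thus the map $f\mapsto (U_f, \bar f)$ is a bijection from occurrences of $K$ in $\Delta$ to the disjoint union over $U\in P_+(q(e))$ of occurrences of $K\cup U$ in $\Delta$, which gives the Kirchhoff identity. (One subtlety to handle carefully: $e'$ ranges over oriented edges of $X$, and distinct $e'\in q(e)$ may a priori have the same label, but since $\tau$ restricted to $q(e)$ is injective onto $Lk_\Gamma(t(\tau(e)))\smallsetminus\{\tau(e)^{-1}\}$ — because $X$ is a tree covering $\Gamma$ — the bookkeeping between subsets of $q(e)$ and subsets of the relevant link in $\Gamma$ is clean.)

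Having verified (1) and (2), I would conclude by invoking Proposition~\ref{prop:weights} directly: it yields the unique $\mu_\Delta\in\gcn$ with $(K;\mu_\Delta)_\alpha=\vartheta_\Delta(K)=(K;\Delta)_\Gamma$ for all $K\in\mathcal K_\Gamma$. The main obstacle is purely the combinatorial bookkeeping in step (2) — in particular making precise the \lq\lq extend-and-restrict\rq\rq\ bijection and checking that the non-leaf-vertex link conditions are preserved both when adding edges at the old leaf $v$ and at the new leaves $t(e')$ (at the new leaves there is nothing to check, since Definition~\ref{defn:occur} imposes no condition at degree-one vertices of the subtree) — but no step requires a genuinely new idea beyond the folded/immersion properties already recorded in the excerpt. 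I expect this to be a short, if slightly fiddly, argument.
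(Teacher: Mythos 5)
Your proposal is correct and follows essentially the same route as the paper: the $F_N$-invariance is dispatched by the deck-transformation observation, and the Kirchhoff identity is proved via exactly the extend-and-restrict bijection the authors use (your $U_f$ is their $U_0$ pulled back to $q(e)$, and your uniqueness/non-extendability argument for $U'\neq U_f$ is the paper's injectivity-plus-surjectivity check for the map $D$). The final appeal to Proposition~\ref{prop:weights} is also how the paper concludes.
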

\begin{proof}
We will see later on that, for a connected $\Delta$, the function $\vartheta_\Delta$ defines the counting current of
a finitely generated subgroup of $F_N$. However, we think it is useful
to have a direct argument for Proposition~\ref{prop:wd}.

It easily follows from the definitions that for every $K\in \mathcal K_\Gamma$ and every $g\in F_N$ we have $(K;\Delta)_\Gamma=(gK;\Delta)_\Gamma$.
Thus we only need to verify that the condition (2) of
Proposition~\ref{prop:weights} holds for $\vartheta_\Delta$.

Let $K\in \mathcal K_\Gamma$ and let $e$ be a terminal edge of
$K$, as in Definition \ref{defn:gcyl}. Consider the set of trees
\[
Q(K,e)=\{ K\cup U | U\in P_+(q(e))\},
\]
as in Notation~\ref{notation:edges}.
Thus each element of $Q(K,e)$ is obtained by adding to $K$ a nonempty
subset $U$ from the set of edges $q(e)$.

We now construct a function $D$ from the set $R$ of all occurrences of
$K$ in $\Delta$ to the set $R'$ of all occurrences of elements of
$Q(K,e)$ in $\Delta$. Put $x:=t(e)\in VK$.

Let $f:K\to\Delta$ be an occurrence of $K$. Since $\Delta$ is a core graph, the vertex $y=f(x)$ has degree bigger than 1 in
$\Delta$. Note that $f(e^{-1})$ is an edge of $\Delta$ with initial
vertex $y$. Put $U_0$ to be the set of the labels of all the edges in
$\Delta$ with origin $y$, excluding the edge $f(e^{-1})$. Thus $U_0$ is
nonempty. Let $U$ be the set of all edges in $\widetilde \Gamma$ with
initial vertex $x$ and with label belonging to $U_0$. Put $K'=K\cup U$,
so that $K'\in Q(K,e)$. We now extend $f$ to a morphism $f':K'\to\Delta$ by
sending every edge from $U$ to the unique edge in $\Delta$ with the
same label and with origin $y$. Then by construction, $f'$ is a
$\Gamma$-graph morphism whose restriction to $K$ is $f$ and, moreover,
$Lk_{K'}(x)=Lk_{\Delta}(y)$. Therefore $f'$ is an occurrence of $K'$
in $\Delta$. We put $D(f):=f'$.

This defines a function $D:R\to R'$. By construction, this function is
injective. Moreover, $D$ is clearly onto. Indeed, if $K'=K\cup U\in
Q(K,e)$ and $f':K'\to \Delta$ is an occurrence of $K'$ in $\Delta$
then $f:=f'|_K$ is an occurrence of $K$ in $\Delta$ and $D(f)=f'$.
Thus $D$ is a bijection, and
hence $R$ and $R'$ have equal cardinalities. It follows that
\[
( K; \Delta)_\Gamma=\sum_{U\in P_+(q(e))} (K\cup U; \Delta)_\Gamma,
\]
as required.
\end{proof}

Note that if $\Delta$ is a finite $\Gamma$-core graph
with connected components $\Delta_1, \dots, \Delta_k$, then each $\Delta_i$ is again a $\Gamma$-core graph and we have
\[
\mu_{\Delta}=\mu_{\Delta_1}+\dots+\mu_{\Delta_k}.
\]

We leave it to the reader to verify the following:
\begin{lem}\label{lem:m}
Let $\Delta$ be as in Proposition~\ref{prop:wd} and assume that
$\Delta$ is connected. Let
$\widehat\Delta$ be an $m$-fold cover of $\Delta$ for some $m\ge
1$. Then $\mu_{\widehat\Delta}=m \mu_\Delta$
\end{lem}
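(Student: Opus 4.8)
The plan is to compare occurrences of a finite subtree $K\in \mathcal K_\Gamma$ in $\widehat\Delta$ with occurrences of $K$ in $\Delta$, using the $m$-fold covering map $p:\widehat\Delta\to\Delta$, and to show that each occurrence of $K$ in $\Delta$ pulls back to exactly $m$ occurrences of $K$ in $\widehat\Delta$. Since, by Proposition~\ref{prop:wd}, both $\mu_{\widehat\Delta}$ and $\mu_\Delta$ are determined by their weights $(K;\cdot)_\alpha=(K;\cdot)_\Gamma$ on all $K\in \mathcal K_\Gamma$, it suffices to prove the numerical identity $(K;\widehat\Delta)_\Gamma=m\,(K;\Delta)_\Gamma$ for every $K\in\mathcal K_\Gamma$; the lemma then follows from the uniqueness clause in Proposition~\ref{prop:weights}.

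First I would fix $K\in\mathcal K_\Gamma$, choose a base vertex $x_0\in VK$, and observe that $p\colon\widehat\Delta\to\Delta$ is a $\Gamma$-graph covering map, so it is in particular a $\Gamma$-graph morphism which is a local homeomorphism at every point, and each vertex of $\Delta$ has exactly $m$ preimages. Given an occurrence $f\colon K\to\Delta$, I claim that for each of the $m$ vertices $\widehat y\in p^{-1}(f(x_0))$ there is a unique $\Gamma$-graph morphism $\widehat f\colon K\to\widehat\Delta$ with $p\circ\widehat f=f$ and $\widehat f(x_0)=\widehat y$. Existence is the usual lifting of a map from a tree along a covering: build $\widehat f$ edge by edge, using at each new vertex the fact that $p$ is a covering and $\Delta$ is folded to lift the next edge of $K$ uniquely; uniqueness follows from part~(3) of the Lemma preceding Definition~\ref{defn:core}, since $\widehat\Delta$ is folded. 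Conversely every $\Gamma$-graph morphism $\widehat f\colon K\to\widehat\Delta$ composes with $p$ to give a $\Gamma$-graph morphism $K\to\Delta$. This sets up a bijection between pairs $(f,\widehat y)$ with $f$ a $\Gamma$-graph morphism $K\to\Delta$ and $\widehat y\in p^{-1}(f(x_0))$, and $\Gamma$-graph morphisms $\widehat f\colon K\to\widehat\Delta$.

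Next I would check that this correspondence restricts to occurrences: $\widehat f$ is an occurrence of $K$ in $\widehat\Delta$ (i.e.\ $Lk_K(x)=Lk_{\widehat\Delta}(\widehat f(x))$ for every $x\in VK$ of degree $\ge 2$) if and only if $f=p\circ\widehat f$ is an occurrence of $K$ in $\Delta$. This is because $p$ restricts, for every vertex $\widehat x\in V\widehat\Delta$, to a bijection $Lk_{\widehat\Delta}(\widehat x)\to Lk_\Delta(p(\widehat x))$ of subsets of $E\Gamma$ that preserves labels (this is exactly what it means for $p$ to be a $\Gamma$-graph covering), so $Lk_K(x)=Lk_{\widehat\Delta}(\widehat f(x))$ as subsets of $E\Gamma$ iff $Lk_K(x)=Lk_\Delta(f(x))$. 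Hence occurrences $\widehat f$ of $K$ in $\widehat\Delta$ are in bijection with pairs $(f,\widehat y)$ where $f$ is an occurrence of $K$ in $\Delta$ and $\widehat y$ ranges over the $m$-element set $p^{-1}(f(x_0))$, giving $(K;\widehat\Delta)_\Gamma=m\,(K;\Delta)_\Gamma$, as desired.

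The main point requiring care — the only genuine obstacle — is the lifting/uniqueness argument for $\Gamma$-graph morphisms out of the tree $K$: one must verify that a $\Gamma$-graph covering map has the path-lifting and unique-lifting properties at the level of $\Gamma$-graph morphisms (not merely as topological maps), which is where foldedness of $\widehat\Delta$ and the covering condition $Lk_{\widehat\Delta}(\widehat x)=Lk_\Delta(p(\widehat x))$ enter. Once that is in place, everything else is bookkeeping with links, and the conclusion $\mu_{\widehat\Delta}=m\,\mu_\Delta$ is immediate from Proposition~\ref{prop:weights}. (The stated "$m\ge 1$" makes the $m=1$ case vacuous, and the argument is uniform in $m$.)
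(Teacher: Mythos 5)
Your argument is correct, and since the paper explicitly leaves this lemma to the reader, your lifting argument (unique lifts of an occurrence $f:K\to\Delta$ along the covering $p:\widehat\Delta\to\Delta$, one for each of the $m$ preimages of $f(x_0)$, combined with the observation that $p$ induces label-preserving bijections on links so that lifts of occurrences are occurrences and vice versa) is exactly the verification the authors intended. The identity $(K;\widehat\Delta)_\Gamma=m\,(K;\Delta)_\Gamma$ for all $K\in\mathcal K_\Gamma$ then gives $\mu_{\widehat\Delta}=m\,\mu_\Delta$ by the uniqueness clause of Proposition~\ref{prop:weights}, as you say.
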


\subsection{Counting currents and $\Gamma$-graphs}\label{GraphsCurrents}

We now want to relate the current constructed in
Proposition~\ref{prop:wd} to counting currents of finitely generated
subgroups of $F_N$.



Let $\alpha: F_N\to\pi_1(\Gamma)$ be a marking on $F_N$, and let $X:=\widetilde \Gamma$. Recall from Subsection \ref{subsec:mark} that $\partial F_N$ and $\partial X$ are identified via the homeomorphism induced by $\alpha$.
Note that if $S\in \mathfrak C_N$, the fact that $S$ has cardinality at least
$2$ implies that its convex hull $Conv_X(S)$ is nonempty. Moreover, it is easy to see
that $Conv_X(S)$ is an infinite subtree of $X$ without any degree-1
vertices. Let us denote by $\mathcal T(X)$ the set of all infinite subtrees of $X$ without
degree-1 vertices.

The following statement is an elementary consequence of the
definitions and we leave the details to the reader.

\begin{prop}\label{prop:ch}
Let $\alpha, \Gamma, X$ be as above. Then
\begin{enumerate}
\item For any $S\in \mathfrak C_N$ we have $\partial Conv_X(S)=S$ and for any
  $Y\in \mathcal T(X)$ we have $Y=Conv_X(\partial Y)$. 
\item The convex hull operation yields a bijection $Conv_X: \mathfrak C_N\to \mathcal T(X)$ which is $F_N$-equivariant: for any $S\in \mathfrak C_N$ and $g\in F_N$ we have $Conv_X(gS)=gConv_X(S)$.
\item For any $S\in \mathfrak C$ we have $\partial Conv_X(S)=S$ and for any $Y\in \mathcal T(X)$ we have $Y=Conv_X(\partial Y)$. 
\end{enumerate}
\end{prop}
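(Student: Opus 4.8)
The plan is to establish the two set-theoretic identities $\partial Conv_X(S)=S$ (for $S\in\mathfrak C_N$) and $Conv_X(\partial Y)=Y$ (for $Y\in\mathcal T(X)$) directly from the definitions of convex hull, subtree and boundary in the simplicial tree $X$; the bijection and equivariance statements then follow formally. First I would check that $Conv_X$ and $\partial$ take values in the claimed spaces. If $S\in\mathfrak C_N$ then, since $\#S\ge 2$, there is a bi-infinite geodesic with both endpoints in $S$, so $Conv_X(S)$ is a nonempty infinite subcomplex of $X$; it is connected, hence a subtree, because in a tree, for $\xi_1,\zeta_1,\xi_2,\zeta_2\in S$ the geodesic $(\xi_1,\xi_2)$ shares a ray towards $\xi_1$ with $(\xi_1,\zeta_1)$ and a ray towards $\xi_2$ with $(\xi_2,\zeta_2)$, so it joins the two geodesics inside $Conv_X(S)$; and every vertex of $Conv_X(S)$ lies on some bi-infinite geodesic contained in $Conv_X(S)$, hence is an interior vertex of that geodesic and so has degree at least $2$ in $Conv_X(S)$. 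Thus $Conv_X(S)\in\mathcal T(X)$. Symmetrically, for $Y\in\mathcal T(X)$ the set $\partial Y$ is closed in $\partial X=\partial F_N$, and $Y$ contains a bi-infinite geodesic line (start from any edge of $Y$ and extend greedily without backtracking in both directions, which is always possible since $Y$ has no degree-$1$ vertices), so $\#\partial Y\ge 2$ and $\partial Y\in\mathfrak C_N$.

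Next I would prove $\partial Conv_X(S)=S$. The inclusion $S\subseteq\partial Conv_X(S)$ is immediate: for $\xi\in S$ choose $\zeta\in S\setminus\{\xi\}$; then the geodesic $(\xi,\zeta)$ lies in $Conv_X(S)$ and exhibits $\xi$ as an end of $Conv_X(S)$. For the reverse inclusion I would fix a basepoint $v_0\in Conv_X(S)$ and let $\eta\in\partial Conv_X(S)$ be represented by a ray $v_0,p_1,p_2,\dots$ inside $Conv_X(S)$. For each $n\ge 1$ the edge $[p_{n-1},p_n]$ of $Conv_X(S)$ lies on a bi-infinite geodesic $(\xi_n,\zeta_n)$ with $\xi_n,\zeta_n\in S$; orienting it so that the ray emanating from $p_{n-1}$ begins with the edge $[p_{n-1},p_n]$ and calling the corresponding end $\zeta_n$, the concatenation $[v_0,p_{n-1}]\cup[p_{n-1},\zeta_n)$ does not backtrack at $p_{n-1}$ (the ray $v_0,p_1,\dots$ being non-backtracking), so $[v_0,\zeta_n)$ and $[v_0,\eta)$ share the initial segment $[v_0,p_n]$. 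Hence $\zeta_n\to\eta$ in $\partial F_N$, and since $S$ is closed, $\eta\in S$.

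Then I would prove $Conv_X(\partial Y)=Y$ for $Y\in\mathcal T(X)$. The inclusion $Conv_X(\partial Y)\subseteq Y$ holds because any two ends of $Y$ are represented by rays lying in $Y$, and since $Y$ is a connected subtree the whole bi-infinite geodesic between them lies in $Y$. For $Y\subseteq Conv_X(\partial Y)$: every vertex of $Y$ has degree at least $2$, so every edge of $Y$ extends, by a greedy non-backtracking walk, to a bi-infinite geodesic line inside $Y$ whose endpoints lie in $\partial Y$; hence every edge, and therefore every vertex, of $Y$ lies in $Conv_X(\partial Y)$.

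Finally, statement (2) is formal: the two identities just proved say exactly that $Conv_X:\mathfrak C_N\to\mathcal T(X)$ and $\partial:\mathcal T(X)\to\mathfrak C_N$ are mutually inverse, so $Conv_X$ is a bijection; and since $F_N$ acts on $X$ by simplicial isometries, $g$ carries the geodesic $(\xi,\zeta)$ to $(g\xi,g\zeta)$, whence $Conv_X(gS)=g\,Conv_X(S)$. (Statement (3) coincides with statement (1).) I expect the only step that is not pure bookkeeping to be the inclusion $\partial Conv_X(S)\subseteq S$, where one must genuinely use that $S$ is closed together with the "shared initial segment" estimate to see that an end of the convex hull is not merely a limit of endpoints of its constituent geodesics but is actually one of them; this should be the main, though still modest, obstacle.
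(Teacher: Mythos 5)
Your proof is correct; the paper itself gives no argument here, stating only that the proposition ``is an elementary consequence of the definitions'' and leaving the details to the reader, and your direct verification (in particular the ``shared initial segment'' argument showing $\partial Conv_X(S)\subseteq S$ via closedness of $S$, and the greedy bi-infinite extension showing $Y\subseteq Conv_X(\partial Y)$) supplies exactly those details. No discrepancy with the paper's intended approach.
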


Recall that if $T\in \cvnbar$ and if $H\le F_N$ is a nontrivial
subgroup, there is a unique smallest $H$-invariant subtree of $T$
denoted $T_H$, and, moreover $T_H$ has no degree-one vertices.

Propositions~\ref{prop:ch}, \ref{prop:basic} and \ref{prop:comm} easily imply:
\begin{prop}\label{prop:XH}
Let $\alpha, \Gamma, X$ be as above. Let $H\le F_N$ be a nontrivial finitely
generated subgroup and let $X_H$ be the minimal $H$-invariant subtree
of $X$. Then:
\begin{enumerate}
\item $X_H\in\mathcal T(X)$ and $X_H=Conv_X(\Lambda(H))$. 
\item For any $g\in F_N$, $gX_H=X_{gHg^{-1}}$.
\item $H=Comm_{F_N}(H)$ if and only if $Stab_{F_N}(X_H)=H$.
\end{enumerate}
\end{prop}


\begin{conv}[$\Gamma$-core graphs representing conjugacy classes of subgroups of $F_N$]\label{conv:subgroupcore}
Let $\alpha: F_N\to\pi_1(\Gamma)$ be a marking on $F_N$, and let $X:=\widetilde \Gamma$. Let $H\le F_N$ be a finitely generated
subgroup and let $X_H$ be the minimal $H$-invariant subtree of $X$. Note
that, being a subgraph of $\widetilde \Gamma$, the tree $X_H$ comes
equipped with a canonical $\Gamma$-graph structure $X_H\to\Gamma$ (which is just the restriction to $X_H$ of the
universal covering map $\widetilde \Gamma\to\Gamma$) which is
$F_N$-invariant. Put $\Delta=H\setminus X_H$. Then $\Delta$ is a
finite connected graph without degree-1 vertices (by minimality of
$X_H$); that is, $\Delta$ is a $\Gamma$-core graph. Moreover, $\Delta$ inherits a natural $\Gamma$-graph structure
from $X_H$.

It is easy to see, in view of part (2) of Proposition~\ref{prop:XH}, that the isomorphism type of $\Delta$ as a
$\Gamma$-graph depends only on the conjugacy class $[H]$ of $H$ in
$F_N$. Thus we say that $\Delta$ is the $\Gamma$-{\it core graph representing $[H]$}.
\end{conv}


We can also describe $\Delta$ as follows: $\Delta$ is the
\emph{core} of $\widehat \Gamma = H\setminus X$, that is, $\Delta$ is the union of
all immersed (but not necessarily simple) circuits in $\widehat \Gamma$. Thus $\Delta$ and
$\widehat \Gamma$ are homotopy equivalent, and $\Delta$ is obtained
from $\widehat \Gamma$ by \lq\lq cutting-off\rq\rq\ a (possibly empty) collection
of infinite tree \lq\lq hanging branches\rq\rq . Also, $\Delta$ is the smallest subgraph
of $\widehat \Gamma$ whose inclusion in $\widehat \Gamma$ is a
homotopy equivalence with $\widehat\Gamma$. The labelling map $\tau:\Delta\to\Gamma$ satisfies
$\tau_\#(\pi_1(\Delta))=H$. In fact $\tau_\#$ is an isomorphism between $\pi_1(\Delta)$ and $H$ and sometimes,
by abuse of notation, we will write $\pi_1(\Delta)=H$.


\begin{thm}\label{thm:cc}
Let $\alpha: F_N\to\pi_1(\Gamma)$ be a marking on $F_N$. Let $H\le
F_N$ be a nontrivial finitely generated subgroup such that $Comm_{F_N}(H)=H$. Let $\Delta$ be the $\Gamma$-core graph representing $[H]$. Then $\eta_H=\mu_\Delta$.
\end{thm}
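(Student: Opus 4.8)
The plan is to show that the two subset currents $\eta_H$ and $\mu_\Delta$ assign the same weight $(K;\mu)_\alpha$ to every finite non-degenerate subtree $K$ of $X=\widetilde\Gamma$; since by Proposition~\ref{prop:weights} a subset current is uniquely determined by its system of weights on the cylinders $\mathcal SCyl_\alpha(K)$, this suffices. So it is enough to prove that for every $K\in\mathcal K_\Gamma$ we have
\[
\eta_H(\mathcal SCyl_\alpha(K))=(K;\Delta)_\Gamma.
\]

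First I would unpack the left-hand side. By Definition~\ref{defn:count} (using $Comm_{F_N}(H)=H$) we have $\eta_H=\sum_{H_1\in[H]}\delta_{\Lambda(H_1)}$, so $\eta_H(\mathcal SCyl_\alpha(K))$ counts the number of conjugates $H_1=gHg^{-1}$ of $H$ (equivalently, by Proposition~\ref{prop:XH}(3) and the fact that $Stab_{F_N}(X_H)=H$, the number of distinct translates $gX_H$) such that $\Lambda(gHg^{-1})=g\Lambda(H)\in\mathcal SCyl_\alpha(K)$. Using Proposition~\ref{prop:ch}, $g\Lambda(H)\in\mathcal SCyl_\alpha(K)$ translates into a condition on the tree $gX_H=Conv_X(g\Lambda(H))$: namely $K\subseteq gX_H$ and every bi-infinite geodesic of $gX_H$ that meets $K$ in a non-degenerate segment enters and exits $K$ through its degree-one vertices. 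In other words, $\eta_H(\mathcal SCyl_\alpha(K))$ is the number of translates $gX_H$ of $X_H$ in which $K$ sits as a subtree in the manner prescribed by the cylinder condition — precisely the condition that the inclusion $K\hookrightarrow gX_H$ be a local homeomorphism at every non-leaf vertex of $K$, i.e.\ $Lk_K(x)=Lk_{gX_H}(gx)$ for every vertex $x$ of $K$ of degree $\ge 2$.

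Next I would match this with the right-hand side. An occurrence of $K$ in $\Delta$ is a $\Gamma$-graph morphism $f:K\to\Delta=H\setminus X_H$ with $Lk_K(x)=Lk_\Delta(f(x))$ at every vertex $x$ of degree $\ge 2$ in $K$. The key is the correspondence between occurrences of $K$ in $\Delta$ and translates $gX_H$ of $X_H$ containing $K$ appropriately. Given such a translate $gX_H$, the quotient map $X_H\to\Delta$ precomposed with translation by $g^{-1}$ (i.e.\ $K\hookrightarrow gX_H \xrightarrow{g^{-1}} X_H \to \Delta$) is a $\Gamma$-graph morphism, and the link condition transfers because $g$ is an isometry and the covering $X_H\to\Delta$ is a local homeomorphism away from nothing (it is a covering map everywhere). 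Conversely, given an occurrence $f:K\to\Delta$, I lift: pick any vertex $x_0$ of $K$, pick a lift $\tilde y_0\in X_H$ of $f(x_0)$; since $K$ is a tree (simply connected) and $X_H\to\Delta$ is a covering, $f$ lifts uniquely to a $\Gamma$-graph morphism $\tilde f:K\to X_H$ with $\tilde f(x_0)=\tilde y_0$; because $X=\widetilde\Gamma$ is a tree and $\tilde f$ is label-preserving and injective on stars (from the link condition at non-leaf vertices, plus folding), $\tilde f$ is the inclusion of $K$ into $X$ composed with a deck transformation, so its image is $gK$ for a unique $g\in F_N$, and the image lies in $gX_H$... actually I want the other normalization: $\tilde f(K)\subseteq X_H$, and writing $\tilde f = g^{-1}|_K$ on the nose gives $K\subseteq gX_H$. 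The different choices of basepoint lift $\tilde y_0$ (there are $|Lk$-irrelevant... there are as many as vertices of $X_H$ over $f(x_0)$, i.e.\ an $H$-orbit) correspond exactly to replacing $g$ by $hg$ for $h\in H$, which does not change $gX_H$ (since $hX_H=X_H$) — so occurrences biject with translates $gX_H$, not with pairs. I would carry out this lifting/translation bijection carefully; verifying that the link condition at non-leaf vertices of $K$ is equivalent on both sides, and that basepoint-lift ambiguity is exactly the $H$-action, is the crux.

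The main obstacle I anticipate is exactly this bookkeeping in the lifting argument: making sure the map "occurrence $\mapsto$ translate $gX_H$" is well-defined (independent of basepoint lift), is injective, and is surjective, and in particular checking that an occurrence's image tree, once lifted to $X$, is genuinely a full translate of $X_H$ containing $K$ with the correct link behaviour — as opposed to some proper subtree. This relies on the fact that $X_H\to\Delta$ is a genuine covering map (so the link at every vertex of $X_H$ equals the link at its image in $\Delta$), which forces the lift $\tilde f(K)$ to inherit exactly the links of $\Delta$ at $K$'s non-leaf vertices, matching the cylinder condition. Once the bijection between occurrences of $K$ in $\Delta$ and translates $gX_H\supseteq K$ satisfying the cylinder condition is established, we conclude $\eta_H(\mathcal SCyl_\alpha(K))=(K;\Delta)_\Gamma$ for all $K$, hence $\eta_H=\mu_\Delta$ by the uniqueness in Proposition~\ref{prop:weights}.
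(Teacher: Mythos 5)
Your proposal is correct and follows essentially the same route as the paper: reduce to equality of weights on all cylinders $\mathcal SCyl_\alpha(K)$ and establish a bijection between occurrences of $K$ in $\Delta=H\setminus X_H$ and the $F_N$-translates $gX_H$ realizing the cylinder condition, using the covering $X_H\to\Delta$, unique lifting of the simply connected $K$, the $H$-orbit ambiguity of the basepoint lift, and $Stab_{F_N}(X_H)=Comm_{F_N}(H)=H$ for injectivity. Your explicit reformulation of "$g\Lambda(H)\in\mathcal SCyl_\alpha(K)$" as the link condition $Lk_K(x)=Lk_{gX_H}(x)$ at interior vertices is in fact a bit more careful than the paper's phrasing ("translates of $X_H$ that contain $K$") and is exactly what makes the surjectivity step produce a genuine occurrence.
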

\begin{proof}
We need to show that for every finite non-degenerate subtree $K$ of
$X=\widetilde \Gamma$ we have $(K; \eta_H)_\alpha=(K;\mu_\Delta)_\alpha$.

Choose a vertex $x_0$ in $K$ and let $v_0$ be the projection of $x_0$
in $\Gamma$. We may assume that $v_0$ is the base-point of $\Gamma$
and that $\alpha:F_N\isom \pi_1(\Gamma,v_0)$. Recall that $X_H=Conv_X(\Lambda
H)$ is the smallest $H$-invariant subtree of $X$. By replacing $H$
by its conjugate if necessary, we may assume that $x_0\in
X_H$. Moreover, $\Delta=H\setminus X_H$, and $(X_H,x_0)$ is canonically identified with $\widetilde
{(\Delta,y_0)}$, where $y_0$ is the image of $x_0$ under the projection $p:X_H\rightarrow\Delta$.  

We need to show that the number $( K;\Delta)_\alpha$ of
occurrences of $K$ in $\Delta$ is equal to the number of distinct
$F_N$-translates of $X_H$ that contain $K$. We will construct a function $Q$ from the set of occurrences of $K$ in
$\Delta$ to the set of $F_N$-translates of $X_H$ that contain $K$.

Let $f:K\to \Delta$ be an occurrence of $K$ in $\Delta$. Choose an edge-path
$\gamma$ from $y_0$ to $f(x_0)$ in $\Delta$. The fact that both $y_0$
and $f(x_0)$ project to $v_0$ in $\Gamma$ implies that the (unique) lift
$\widetilde\gamma$ of $\gamma$ to an edge-path in $X$ with origin $x_0$ has
its terminal vertex of the form $gx_0$ for some (unique) $g\in F_N$. Then
the definition of occurrence of $K$ in $\Delta$ and the fact that
$\widetilde \Delta$ is identified with $X_H$ imply that $gK\subseteq
X_H$ and so $K\subseteq g^{-1}X_H$. We set $Q(f):=g^{-1}X_H$. 
We need to check that $Q(f)$ is well-defined, that is, that the translate
$g^{-1}X_H$ does not depend on the particular choice of an edge-path
$\gamma$ from $y_0$ to $f(x_0)$ in $\Delta$. Indeed, if $\gamma'$ is
another such edge-path, then $\gamma'\gamma^{-1}\in
\pi_1(\Delta,y_0)=H$. Hence if $\gamma'$ lifts to an edge-path from $x_0$ to
$g'x_0$ in $X$, we have $g'g^{-1}\in H$, so that $g^{-1}=(g')^{-1}h$
for some $h\in H$ and hence $g^{-1}X_H=(g')^{-1}X_H$. 
Thus the translate $g^{-1}X_H$ containing $K$ is uniquely determined
by the occurrence $f:K\to \Delta$ of $K$ in $\Delta$, so that $Q(f)$
is well-defined. Hence we have
constructed a map $Q$ from the set of occurrences of $K$ in $\Delta$
to the set of $F_N$-translates of $X_H$ containing $K$.

We claim that the map $Q$ is injective. Indeed, let $f_1:K\to\Delta$ be another occurrence of $K$ in
$\Delta$. Let
$\gamma_1$ be an edge-path in $\Delta$ from $y_0$ to $f_1(x_0)$ and let
$g_1\in F_N$ be such that the lift to $X$ of $\gamma_1$ with origin
$x_0$ has terminus $g_1x_0$. Suppose that $g_1^{-1}X_H=g^{-1}X_H$. Then
$g_1g^{-1}X_H=X_H$ and hence $g_1g^{-1}\in Comm_{F_N}(H)$. By
assumption, on $H=Comm_{F_N}(H)$, so that $g_1g^{-1}\in H$
and $g_1=hg$ for some $h\in H$. 

However, since $\pi_1(\Delta,y_0)=H$, it follows that $h$ labels a
closed edge-path from $y_0$ to $y_0$ in $\Delta$ and hence, up to edge-path
reductions, $\gamma_1=\beta\gamma$ for some closed loop $\beta$ from
$y_0$ to $y_0$ in $\Delta$. Therefore the termini of $\gamma$ and
$\gamma_1$ are the same, that is 
$f(x_0)=f_1(x_0)\in V\Delta$. It follows, since all the graphs under
consideration are folded, that $f=f_1$. Thus indeed, the map $Q$ is
injective. 

We claim that $Q$ is also surjective. Suppose a translate
$g^{-1}X_H$ contains $K$. Then $X_H$ contains $gK$. Both $x_0$
and $gx_0$ belong to $X_H$ and hence the geodesic edge-path $[x_0,gx_0]$ is
contained in $X_H$. This edge-path projects to an edge-path $\gamma$ in $\Delta=
H\setminus X_H$ from $y_0$ to the vertex $z_0$ which is the image of
$gx_0$ in $\Delta$ under the projection $p: X_H\to \Delta$. Since $p$ is a
covering map, $f: K\to \Delta$ defined as $f(x)=p(g(x))$, $x\in
K$, is an occurrence of $K$ in $\Delta$, with $f(x_0)=z_0$. The definition of $Q$ now
gives $Q(f)=g^{-1}X_H$. 
 
Thus $Q$ is a bijection between the set of
occurrences of $K$ in $\Delta$ and the set of $F_N$-translates of
$X_H$ containing $K$. It follows that $( K; \eta_H)_\alpha=(K; \mu_\Delta)_\alpha$, as required.
\end{proof}
 
\begin{cor}\label{cor:cc}
Let $H\le F_N$ be any nontrivial finitely generated subgroup. Let $\alpha:F_N\to\Gamma$ be a marking and let $\Delta$ be the $\Gamma$-core graph 

representing $[H]$. Then $\eta_H=\mu_\Delta$.
\end{cor}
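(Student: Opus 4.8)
The plan is to reduce Corollary~\ref{cor:cc} to Theorem~\ref{thm:cc} by passing to the commensurator of $H$ and then invoking the covering behavior of the currents $\mu_\Delta$ recorded in Lemma~\ref{lem:m}.

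First I would set $H_0:=Comm_{F_N}(H)$ and $m:=[H_0:H]$. Proposition~\ref{prop:comm} guarantees that $m<\infty$ and that $Comm_{F_N}(H_0)=H_0$, so $H_0$ is a nontrivial finitely generated subgroup satisfying the hypothesis of Theorem~\ref{thm:cc}. By Definition~\ref{defn:count} we have $\eta_H=m\,\eta_{H_0}$, and by Theorem~\ref{thm:cc} applied to $H_0$ we get $\eta_{H_0}=\mu_{\Delta_0}$, where $\Delta_0$ is the $\Gamma$-core graph representing $[H_0]$, i.e. $\Delta_0=H_0\setminus X_{H_0}$.

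Next I would identify the minimal invariant subtrees. By Proposition~\ref{prop:comm}(2), $\Lambda(H)=\Lambda(H_0)$, and by Proposition~\ref{prop:XH}(1), $X_H=Conv_X(\Lambda(H))=Conv_X(\Lambda(H_0))=X_{H_0}$. Hence the $\Gamma$-core graph $\Delta$ representing $[H]$ is $\Delta=H\setminus X_{H_0}$. Since $F_N$ acts freely on $X=\widetilde\Gamma$, both $H$ and $H_0$ act freely on the tree $X_{H_0}$, so $X_{H_0}$ is simultaneously the universal cover of $\Delta$ and of $\Delta_0$, and the natural quotient map $\Delta=H\setminus X_{H_0}\to H_0\setminus X_{H_0}=\Delta_0$ is a covering map of degree $[H_0:H]=m$ (both graphs are connected, being quotients of a tree). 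Applying Lemma~\ref{lem:m} to this $m$-fold cover yields $\mu_\Delta=m\,\mu_{\Delta_0}=m\,\eta_{H_0}=\eta_H$, which is the desired conclusion.

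The argument is essentially bookkeeping, and the only step that needs a little care is checking that $\Delta\to\Delta_0$ really is an $m$-fold covering in the graph-theoretic sense used in Lemma~\ref{lem:m}; this relies on the freeness of the $F_N$-action on $\widetilde\Gamma$ (so that both $X_{H_0}\to\Delta_0$ and $X_{H_0}\to\Delta$ are honest coverings) together with the identification $X_H=X_{H_0}$, which supplies the common universal cover and makes the degree equal to the index $[H_0:H]$. Everything else is immediate from results already established in the excerpt.
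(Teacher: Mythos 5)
Your proposal is correct and follows essentially the same route as the paper: pass to $H_0=Comm_{F_N}(H)$, apply Theorem~\ref{thm:cc} to $H_0$, observe that $\Delta$ is an $m$-fold cover of the core graph of $[H_0]$, and conclude via Lemma~\ref{lem:m} and the definition $\eta_H=m\,\eta_{H_0}$. The only difference is that you spell out (via $\Lambda(H)=\Lambda(H_0)$ and $X_H=X_{H_0}$) why the covering claim holds, a step the paper simply asserts.
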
 

\begin{proof}
Put $H_1=Comm_{F_N}(H)$. Then by Proposition~\ref{prop:comm} $H_1=Comm_{F_N}(H_1)$, and $[H_1:H]=m<\infty$. 
Let $\Delta_1$ be the $\Gamma$-core graph representing $[H_1]$.  Therefore by Theorem~\ref{thm:cc} $\eta_{H_1}=\mu_{\Delta_1}$. 
Moreover,  $\Delta$ is an $m$-fold cover of $\Delta_1$ and hence by Lemma~\ref{lem:m} $\mu_\Delta=m \mu_{\Delta_1}$. Also, by definition, $\eta_H=m\eta_{H_1}$. Hence $\eta_H=m\eta_{H_1}=m\mu_{\Delta_1}=\mu_\Delta$, as required.
\end{proof}

\section{Rational currents are dense}\label{sec:dense}

\begin{conv}\label{conv:GA} In order to avoid technical complications in the proof of the main result of this Section, Theorem \ref{thm:dense} in Subsection \ref{subsec:result} below, we will restrict ourselves in this section to only considering $\Gamma$-graphs for $\Gamma=R_N$, the standard $N$-rose. The universal cover $X=\widetilde R_N$ is then the Cayley graph of $F_N$ with respect to some basis. We will fix some basis $A$ in $F_N$ and we will think of $A$ as defining a marking $\alpha_A:F_N\to R_N$, that will also be fixed for the remainder of this Section.


Note that an $R_N$-structure on a graph $\Delta$ can be specified by
assigning every edge $e\in E\Delta$ a label $\tau(e)\in A^{\pm 1}$ so
that $\tau(e^{-1})=(\tau(e))^{-1}$.

   \end{conv}


\subsection{Linear span of rational currents}\label{subsec:linearspan}

\begin{prop}\label{prop:span}  Denote by $\mathcal SCurr_r(F_N)$ the set of all rational subset currents. 
Let $Span\left(\mathcal SCurr_r(F_N) \right)$ denote the $\mathbb R_{\ge 0}$-linear span of $\mathcal SCurr_r(F_N)$.

Let $N\ge 2$. Then the set  $\mathcal SCurr_r(F_N)$ is a dense subset of $Span\left(\mathcal SCurr_r(F_N) \right)$.
\end{prop}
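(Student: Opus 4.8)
The plan is to approximate an arbitrary element $\mu=\sum_{i=1}^k t_i\eta_{H_i}$ of $Span\left(\mathcal SCurr_r(F_N)\right)$ (with $t_i\ge 0$ and $H_i\le F_N$ nontrivial finitely generated) by single rational currents, via a ``concatenation'' construction generalizing the classical argument for geodesic currents. Working with the fixed marking $\alpha_A\colon F_N\to R_N$ and $X=\widetilde R_N$ of Convention~\ref{conv:GA}, recall that by Proposition~\ref{top:GC} convergence in $\gcn$ is equivalent to pointwise convergence of the weights $\mu\mapsto (K;\mu)_{\alpha_A}$ over all finite non-degenerate subtrees $K$ of $X$, and that each such weight is an $\R_{\ge 0}$-linear continuous functional on $\gcn$. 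Hence I may first replace the $t_i$ by rationals $a_i/b$ with common denominator $b$, and, discarding the indices with $a_i=0$ (if all vanish then $\mu=0=0\cdot\eta_{H_1}$ is already rational), it suffices to approximate $\tfrac1b\sum_i a_i\eta_{H_i}$; since a nonnegative scalar multiple of a rational current is rational, it is in fact enough to approximate $\sum_{i=1}^k a_i\eta_{H_i}$ with each $a_i\ge 1$ an integer. By Corollary~\ref{cor:cc} this current equals $\mu_{\Delta^0}$, where $\Delta^0$ is the $R_N$-core graph that is the disjoint union of $a_i$ copies of the $R_N$-core graph representing $[H_i]$, $i=1,\dots,k$; denoting its connected components $\Sigma_1,\dots,\Sigma_m$ (so $m=\sum_i a_i$), we have $\mu_{\Delta^0}=\sum_{j=1}^m\mu_{\Sigma_j}$.

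For each $n\ge 1$ and each $j$, let $\Sigma_j^{(n)}$ be a \emph{connected} $n$-fold cover of $\Sigma_j$; such a cover exists because $\pi_1(\Sigma_j)$ is free of positive rank (a finite connected core graph is not a tree), hence surjects onto $\Z/n\Z$. A cover of a finite folded $R_N$-core graph is again a finite folded $R_N$-core graph, since covers preserve vertex-links and vertex-degrees, and by Lemma~\ref{lem:m} we have $\mu_{\Sigma_j^{(n)}}=n\,\mu_{\Sigma_j}$. Now build a connected finite $R_N$-core graph $\Delta_n'$ by gluing $\Sigma_1^{(n)},\dots,\Sigma_m^{(n)}$ together using $m-1$ elementary surgeries: to join two components, delete one edge $f$ (label $b$, endpoints $p,q$) of the first and one edge $f'$ (label $c$, endpoints $p',q'$, chosen so that $c\ne b^{-1}$) of the second, and re-attach the four loose ends by two length-two paths $p\xrightarrow{\ b\ }z\xrightarrow{\ c\ }q'$ and $p'\xrightarrow{\ c\ }z'\xrightarrow{\ b\ }q$ through two new degree-two vertices $z,z'$. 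One checks this keeps the graph folded, restores the link of every old vertex, and creates no vertex of degree $\le 1$; performing $m-1$ such surgeries along a spanning tree of the set of components makes $\Delta_n'$ connected. The crucial point is that $\Delta_n'$ differs from the disjoint union $\widetilde\Delta_n:=\bigsqcup_{j=1}^m\Sigma_j^{(n)}$ only in a ``surgery region'' of bounded size: finitely many new vertices and new edges on the $\Delta_n'$ side and finitely many deleted edges on the $\widetilde\Delta_n$ side, a number depending on $m$ and $N$ but \emph{not} on $n$, and the two $R_N$-graphs agree --- including all vertex-links --- everywhere else.

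Finally I estimate weights. Fix a finite non-degenerate subtree $K$ of $X$ and let $d$ be its number of edges. Any occurrence of $K$ in $\Delta_n'$ whose image meets none of the finitely many new edges and new vertices is literally an occurrence of $K$ in $\widetilde\Delta_n$ whose image meets none of the deleted edges, and this correspondence is a bijection (the two graphs share a common sub-$R_N$-graph containing such an image, with the same links in both). The occurrences on either side whose image does meet the corresponding finite exceptional set number at most $C(K)$, a constant independent of $n$: an occurrence has image contained in a ball of radius $d$ about any of its vertices, such a ball contains at most $(2N)^{d+1}$ vertices, and the number of graph-morphisms from $K$ into a fixed finite vertex set is bounded. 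Hence $\bigl|(K;\mu_{\Delta_n'})_{\alpha_A}-(K;\mu_{\widetilde\Delta_n})_{\alpha_A}\bigr|\le C(K)$ for all $n$. Since $(K;\mu_{\widetilde\Delta_n})_{\alpha_A}=\sum_j(K;\mu_{\Sigma_j^{(n)}})_{\alpha_A}=n\sum_j(K;\mu_{\Sigma_j})_{\alpha_A}=n\,(K;\mu_{\Delta^0})_{\alpha_A}$ by Lemma~\ref{lem:m}, dividing by $n$ and letting $n\to\infty$ gives $\tfrac1n(K;\mu_{\Delta_n'})_{\alpha_A}\to (K;\mu_{\Delta^0})_{\alpha_A}$ for every $K$, i.e.\ $\tfrac1n\mu_{\Delta_n'}\to\mu_{\Delta^0}$ in $\gcn$ by Proposition~\ref{top:GC}. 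As $\Delta_n'$ is a connected finite $R_N$-core graph it is the $R_N$-core graph representing $[H_n']$ for the nontrivial finitely generated subgroup $H_n'=\tau_\#\pi_1(\Delta_n')\le F_N$, so $\mu_{\Delta_n'}=\eta_{H_n'}$ by Corollary~\ref{cor:cc} and $\tfrac1n\mu_{\Delta_n'}=\tfrac1n\eta_{H_n'}$ is rational. This exhibits $\mu_{\Delta^0}=\sum_i a_i\eta_{H_i}$, and hence the original $\mu$, as a limit of rational subset currents.

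The main obstacle is the gluing step. Everything rests on introducing the $n$-dependence of the multiplicities through covers (Lemma~\ref{lem:m}) rather than through disjoint copies, so that only $O(1)$ --- not $O(n)$ --- surgeries are needed; one then has to perform the surgery carefully enough that every vertex-link is preserved, and carry out the bounded-error count of occurrences near the surgery region. The various reductions in the first paragraph are routine, relying only on the continuity and linearity of the weight functionals and on Corollary~\ref{cor:cc}.
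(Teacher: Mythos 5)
Your proof follows essentially the same strategy as the paper's: reduce to integer combinations of counting currents, pass to connected $n$-fold covers so that Lemma~\ref{lem:m} supplies the factor $n$, glue the covers together by a surgery whose size is bounded independently of $n$, and observe that all but boundedly many occurrences of a fixed $K$ are unaffected, so that dividing by $n$ kills the error. The one genuine issue is your claim that performing the edge-deletion surgeries ``makes $\Delta_n'$ connected.'' If the deleted edge $f$ is a bridge of $\Sigma^{(n)}_1$ (splitting it into a piece $P\ni p$ and a piece $Q\ni q$) and $f'$ is a bridge of $\Sigma^{(n)}_2$ (splitting it into $P'\ni p'$ and $Q'\ni q'$), then your re-attachment joins $P$ to $Q'$ through $z$ and $P'$ to $Q$ through $z'$, producing \emph{two} connected components rather than one; since the whole point of connectivity is that $\mu_{\Delta_n'}$ be a single counting current $\eta_{H_n'}$ rather than a sum, this case must be excluded. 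The fix is immediate: a finite core graph has minimum degree $2$, hence first Betti number at least $1$, hence a non-separating edge, and choosing $f$ (say) non-separating already forces the glued graph to be connected. Apart from this, your surgery is arguably cleaner than the paper's, which attaches a three-edge arc at vertices of degree $<2N$ and therefore needs a separate case analysis when some cover has all vertices of degree $2N$ (i.e.\ is a cover of $R_N$); your deletion-and-reconnection works uniformly in all cases, and your verification that it preserves foldedness and all old vertex links is correct. The remaining differences --- treating all $k$ summands at once instead of inducting down to $k=2$, and realizing the integer multiplicities $a_i$ by disjoint unions of copies of core graphs rather than by index-$a_i$ subgroups --- are cosmetic.
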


\begin{proof}
We need to show that an arbitrary linear combination $c_1\eta_{H_1}+\dots +c_k\eta_{H_k}$ (where $k\ge 1$, $c_i\in \mathbb R_{\ge 0}$) can be approximated by currents of the form $c\eta_H$, where $c\in \mathbb R_{\ge 0}$.  Arguing by induction on $k$, we see that it suffices to prove this statement for $k=2$.

Every current of the form $c_1\eta_{H_1}+c_2\eta_{H_2}$, where $c_1,c_2\in \mathbb R_{\ge 0}$ can be approximated by currents of the form $r_1\eta_{H_1}+r_2\eta_{H_2}$, where $r_2,r_2$ are positive rational numbers. Thus it suffices to approximate by rational currents every current of the form $r_1\eta_{H_1}+r_2\eta_{H_2}$ where $r_1,r_2>0$ are rational numbers. Taking $r_1,r_2$ to a common denominator, we have $r_1=p_1/q$, $r_2=p_2/q$ where $p_1,p_2,q>0$ are integers. Since dividing a rational current by $q$ again yields a rational current,  it is enough to approximate  by rational currents all currents of the form $p_1\eta_{H_1}+p_2\eta_{H_2}$ where $p_1,p_2$ are positive integers. However, $p_1\eta_{H_1}=\eta_{L_1}$, $p_2\eta_{H_2}=\eta_{L_2}$, where $L_i$ is a subgroup of index $p_i$ in $H_i$ for $i=1,2$. Thus we only need to show that the sum of any two counting currents can be approximated by rational currents.

Suppose now that $H_1, H_2\le F_N$ are two nontrivial finitely
generated subgroups and let $\mu=\eta_{H_1}+\eta_{H_2}$.  
For $i=1,2$ let $\Delta_i$ be the $R_N$-core graph representing $[H_i]$, as in Convention \ref{conv:subgroupcore}. Thus, by Theorem~\ref{thm:cc},  $\eta_{H_i}=\mu_{\Delta_i}$ for $i=1,2$.

For $n=1,2,\dots $ let $\Lambda_{i,n}$ be a connected $n$-fold cover of $\Delta_i$.   
We now define a sequence of finite connected $R_N$-core graphs as follows.
Let $n\ge 1$. First assume that each of $\Lambda_{1,n}$, $\Lambda_{2,n}$ has a vertex $v_{i,n}$ of degree $<2N$, where $i=1,2$. Then, since $A^{\pm 1}$ contains at least 4 distinct letters,  there exists an $R_N$-graph $[u_1,u_2]$, which is a segment of three edges with origin denoted $u_1$ and terminus denoted $u_2$, such that identifying the origin of this segment with $v_{1,n}$ and the terminus of this segment with $v_{2,n}$ yields a folded $R_N$-graph:
\[
\Lambda_n:=\Lambda_{1,n}\cup \Lambda_{2,n}\cup [u_1,u_2] / \sim
\]
where  $u_1\sim v_{1,n}$, $u_2\sim v_{2,n}$. Note that by construction $\Lambda_n$ is connected and cyclically reduced.

Suppose now that $\Lambda_{1,n}$ has a vertex $v_{1,n}$ of degree $<2N$ but that every vertex in $\Lambda_{2,n}$ has degree $2N$.  

Let $v_{2,n}$ be any vertex of  $\Lambda_{2,n}$. Let $\Lambda_{2,n}'$ be obtained from $\Lambda_{2,n}$ by removing one of the edges incident to $v_{2,n}$.  Note that $\Lambda_{2,n}'$ is still finite, connected and cyclically reduced.

Then, as in the previous case,  there exists  a simplicial segment $[u_1,u_2]$ consisting of three edges labelled by elements of $A^{\pm 1}$ such that 
\[
\Lambda_n:=\Lambda_{1,n}\cup \Lambda_{2,n}' \cup [u_1,u_2] / \sim
\]
is a folded $R_N$-graph, where $u_1\sim v_{1,n}$, $u_2\sim v_{2,n}$.
Again, by construction, $\Lambda_n$ is connected and cyclically reduced.
In the case where $\Lambda_{2,n}$ has a vertex  of degree $<2N$ but that every vertex in $\Lambda_{1,n}$ has degree $2N$,  we define $\Lambda_n$ in a similar way.

Suppose now that for $i=1,2$ every vertex of $\Lambda_{i,n}$ has degree $2N$. Then $\Lambda_{i,n}$ is a finite cover of $R_N$, so that $\Delta_1,\Delta_2$ are both finite covers of $R_N$. Let $m_i\ge 1$ be such that $\Delta_i$ is an $m_i$-fold cover of $R_N$. Choose $\Lambda_n$ to be any connected $n(m_1+m_2)$-fold cover of $R_N$.  

This defines the sequence $\Lambda_{n}$, $n=1,2,\dots$ of finite, connected $R_N$-core graphs. We claim that
\[
\lim_{n\to\infty} \frac{1}{n} \mu_{\Lambda_n} =\mu_{\Delta_1}+\mu_{\Delta_2} \text{  in  } \gcn.\tag{!}
\] 
We need to show that for every finite non-degenerate subtree $K$ of $X$ we have 
\[
\lim_{n\to\infty} \frac{1}{n} (K; \Lambda_n)= (K; \Delta_1) +( K; \Delta_2).\tag{!!}
\]
Let us fix such a subtree $K$.  Let $n\ge 1$ be arbitrary. If $\Lambda_n$ is of the last described type, where both $\Lambda_{1,n}$ and $\Lambda_{2,n}$ are finite covers of $R_N$, then by construction $\mu_{\Lambda_n}=n(\mu_{\Delta_1}+\mu_{\Delta_2})$, and (!!) obviously holds.
Suppose now that one of the other cases in the construction of $\Lambda_n$ occurs.
Since the graph $\Lambda_n$ is folded, and has a vertex degree $\le 2N$,  all but a bounded number (in terms of some constant depending on $K$ but independent of $n$), occurrences of $K$ in $\Lambda_n$ are disjoint from the segment $[v_{1,n}, v_{2,n}]$ and thus come from occurrences of $K$ in $\Lambda_{1,n}\sqcup \Lambda_{2,n}$.  Hence 
\[
\big|  (K; \Lambda_n) - ( K; \Lambda_{1,n}) - ( K; \Lambda_{2,n})  \big| \le C_K
\]
for some constant $C_K$ independent of $n$. Since $\Lambda_{i,n}$ is an $n$-fold cover of $\Delta_i$, it follows that
\[
\big|  ( K; \Lambda_n) - n(K; \Delta_{1}) - n(K; \Delta_{2})  \big| \le C_K.
\]
Dividing the above inequality by $n$ and passing to the limit as $n\to\infty$, we get (!!), as required. This implies (!).
By Theorem~\ref{thm:cc} $\mu_{\Lambda_n}=\eta_{L_n}$ for some finitely generated nontrivial $L_n\le F_N$.
Hence (!) implies that
\[
\lim_{n\to\infty} \frac{1}{n} \eta_{L_n} =\eta_{H_1}+\eta_{H_2} \text{  in  } \gcn,
\]
which completes the proof.
\end{proof}

\subsection{Subset currents as measures on the space of rooted trees}\label{subsec:root}

Recall that $\mathcal T(X)$ denotes the set of infinite subtrees of $X$ without degree-1 vertices.
Denote by $\mathcal T_1(X)$ the set of all those $T\in \mathcal T(X)$ that contain the vertex $1\in F_N$. Thus all elements of  $\mathcal T_1(X)$ come equipped with a base-point (or root), namely $1\in F_N$. Note also that since every element  $Y\in \mathcal T_1(X)$ is folded,  $Y$ does not admit any nontrivial $\Gamma$-graph automorphisms that fix the root vertex $1$.  Hence we can also think of $\mathcal T_1(X)$ as the set of rooted $R_N$-graphs that are folded infinite trees without degree-1 vertices.

We equip $\mathcal T_1(X)$ with local topology, namely we say that $Y, Y'\in \mathcal T_1(X)$  are close if for some large $n\ge 1$ we have $Y\cap B_X(n)=Y'\cap B_X(n)$, where $B_X(n)$ is the ball of radius $n$ in $X$. Equivalently, $Y$ and $Y'$ are close if $\partial Y, \partial Y'\subseteq \partial X$ have small Hausdorff distance as closed subsets of $\partial X$. This makes $\mathcal T_1(X)$ a compact totally disconnected topological space.

Recall that for any finite non-degenerate subtree $K$ of $X$ we have defined a subset cylinder set $\mathcal SCyl_\alpha(K)\subseteq \mathfrak C_N$, Definition \ref{defn:gcyl}. 
For every such $K$ containing $1\in F_N$, put $\mathcal T Cyl_\alpha(K)$ to be the set of all $Y\in \mathcal T_1(X)$ such that $\partial Y\in \mathcal SCyl_\alpha(K)$.  Thus $\mathcal T Cyl_\alpha(K)$ consists of all $Y\in \mathcal T_1(X)$ such that $K\subseteq Y$ and such that whenever $\xi\in \partial Y$,  then $[1,\xi)\cap K=[1,v]$ where $v$ is a vertex of degree one in $K$. The sets $\mathcal T Cyl_\alpha(K)$ are compact and open, and form a basis of open sets for the topology on $\mathcal T_1(X)$, when $K$ varies over all finite non-degenerate subtrees of $X$ containing $1\in F_N$.

The space $\mathcal T_1(X)$ has a natural (partially defined) root-change operation. For $g\in F_N$ denote by $\mathcal T_{1,g}(X)$ the set of all $Y\in \mathcal T_1(X)$ that contain the vertex $g\in F_N$.
Define $r_g: \mathcal T_{1,g}(X)\to\mathcal T_1(X)$ by $r_g(Y):=g^{-1}Y$ for $Y\in \mathcal T_{1,g}(X)$. In other words, $r_g$ moves the root vertex from $1$ to $g$ in $Y$.
Denote by $\mathcal M_1(X)$ the set of all finite positive Borel measures on $\mathcal T_1(X)$ that are invariant under all $r_g, g\in F_N$.

\begin{prop}\label{prop:m1}
There is a canonical $\mathbb R_{\ge 0}$-linear homeomorphism $\mathbf t: \mathcal M_1(X) \to \gcn$.
\end{prop}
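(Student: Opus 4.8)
The plan is to construct the map $\mathbf t$ and its inverse directly using the bijection $Conv_X\colon \mathfrak C_N\to \mathcal T(X)$ from Proposition~\ref{prop:ch}, and then to identify subset currents with their systems of weights $(K;\mu)_\alpha$ on one side and measures in $\mathcal M_1(X)$ with their values $\nu(\mathcal TCyl_\alpha(K))$ on the other. The key point is that both sides are pinned down by exactly the same combinatorial data: a nonnegative function on the set $\mathcal K_\Gamma$ of finite non-degenerate subtrees of $X$ satisfying $F_N$-invariance and the Kirchhoff relations. For subset currents this is precisely Proposition~\ref{prop:weights}. So the real content is to prove the analogous statement for $\mathcal M_1(X)$: a measure $\nu\in\mathcal M_1(X)$ is determined by its values on the cylinders $\mathcal TCyl_\alpha(K)$ (immediate, since these form a basis of compact open sets generating the Borel $\sigma$-algebra), and conversely any system $\vartheta\colon\mathcal K_\Gamma\to[0,\infty)$ satisfying (1) and (2) of Proposition~\ref{prop:weights} arises this way from a unique $r_g$-invariant finite Borel measure on $\mathcal T_1(X)$, again by the Kolmogorov extension theorem.

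Concretely, first I would define $\mathbf t$. Given $\nu\in\mathcal M_1(X)$, I want to produce $\mu=\mathbf t(\nu)\in\gcn$. The heuristic is $\mu = \sum_{g\in F_N} g_*\big(\text{pushforward of }\nu\text{ under }Conv_X^{-1}\big)$, suitably interpreted so that the result is locally finite; equivalently, define the weight system $\vartheta_\nu(K):=\nu(\mathcal TCyl_\alpha(gK))$ where $g\in F_N$ is chosen so that $gK$ contains $1$ (this is well-defined by $r_g$-invariance of $\nu$, since if both $gK$ and $g'K$ contain $1$ then $g'g^{-1}$ carries the first to the second and fixes $1$, so $r_{g'g^{-1}}$ relates the two cylinders). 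I would check that $\vartheta_\nu$ satisfies conditions (1) and (2) of Proposition~\ref{prop:weights}: condition (1) is exactly the well-definedness just discussed, and condition (2) follows because, for a terminal edge $e$ of $K$ (after translating so $1\in K$), the cylinder $\mathcal TCyl_\alpha(K)$ decomposes as the disjoint union $\sqcup_{U\in P_+(q(e))}\mathcal TCyl_\alpha(K\cup U)$ — this is the tree-side analog of Lemma~\ref{lem:disj}, proved the same way by looking at which edges past $e$ actually occur in $Y$. Then Proposition~\ref{prop:weights} yields a unique $\mu_\nu\in\gcn$ with $(K;\mu_\nu)_\alpha=\vartheta_\nu(K)$, and we set $\mathbf t(\nu):=\mu_\nu$.

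For the inverse, given $\mu\in\gcn$ I would restrict the weight system $K\mapsto (K;\mu)_\alpha$ to subtrees $K$ containing $1$ and feed it to the Kolmogorov extension theorem on the space $\mathcal T_1(X)$ (using that $\mathcal TCyl_\alpha(K)$ are compact open, hence the premeasure extends to a genuine Borel measure on the compact space $\mathcal T_1(X)$; finiteness of the total mass follows from the Kirchhoff relations applied to an edge through $1$, as in the local-compactness proposition). The resulting measure $\nu_\mu$ is $r_g$-invariant precisely because the weight system is $F_N$-invariant, by the same translation argument. Then $\mathbf t(\nu_\mu)=\mu$ and $\nu_{\mathbf t(\nu)}=\nu$ because in each case the two measures (resp. currents) have the same values on a generating family of sets. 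That $\mathbf t$ is $\mathbb R_{\ge 0}$-linear is clear from the construction, and that it is a homeomorphism follows from Proposition~\ref{top:GC}: convergence of subset currents is convergence of all weights $(K;\mu_n)_\alpha$, which under $\mathbf t$ corresponds exactly to convergence of $\nu_n(\mathcal TCyl_\alpha(K))$ for all $K$, i.e.\ weak-$*$ convergence in $\mathcal M_1(X)$ (both directions, since these cylinders form a convergence-determining basis).

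The main obstacle I anticipate is bookkeeping around the root-change normalization: making sure the passage $K\rightsquigarrow gK$ with $1\in gK$ is genuinely canonical, that $r_g$-invariance of $\nu$ gives exactly condition (1), and that the disjoint decomposition of $\mathcal TCyl_\alpha(K)$ over $P_+(q(e))$ is stated with the correct non-degeneracy hypotheses (one must be careful that $K\cup U$ is again non-degenerate and that adding edges at an interior-degree-$1$ terminal vertex is the right move — this is where the "enters and exits through degree-one vertices" condition in the definition of the cylinder is used). None of this is deep, but it is the kind of thing where a sloppy identification would break the bijection, so I would state a clean tree-side analog of Lemma~\ref{lem:disj} and Proposition~\ref{prop:weights} first and then assemble $\mathbf t$ formally from those.
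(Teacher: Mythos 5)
Your construction of $\mathbf t$ via the weight system $K\mapsto\nu(\mathcal TCyl_\alpha(gK))$ (with $gK\ni 1$), verified through root-change invariance and the Kirchhoff relations and then fed into Proposition~\ref{prop:weights}, is exactly the paper's argument; the paper simply leaves the inverse, linearity, and the homeomorphism claim to the reader, all of which you fill in correctly. No substantive difference in approach.
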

\begin{proof}
Let $\mu\in  \mathcal M_1(X)$. Define a measure $\mathbf t(\mu)$ on $\mathfrak C_N$ as follows. For a finite non-degenerate subtree $K$ of $X$ containing $1$ put
\[
( [K]; \mathbf t(\mu))_\alpha:=\mu\left( \mathcal T Cyl_\alpha(K) \right).\tag{$\clubsuit$}
\]
Invariance of $\mu$ with respect to the root-change implies that if $K, K'$ are two finite non-degenerate subtrees of $X$ that contain $1$ and such that $K'=gK$ for some $g\in F_N$, then $\mu\left( \mathcal T Cyl_\alpha(K) \right)=\mu\left( \mathcal T Cyl_\alpha(K') \right)$, so that $( [K]; \mathbf t(\mu))_\alpha$ is well-defined.
  
The assumption that $\mu$ is invariant with respect to $\{r_g: g\in F_N\}$ translates into the fact that $(\clubsuit)$ defines a collection of weights satisfying the requirements of Proposition~\ref{prop:weights} so that $\mathbf t(\mu)$ is indeed a subset current. It is then easy to check that $\mathbf t$ is an $\mathbb R_{\ge 0}$-linear homeomorphism and we leave the details to the reader.
\end{proof}

\subsection{Weak approximation by finite graphs}\label{subsec:approx}

Consider the set $U_{N}$ of all $R_N$-graphs with vertices of degree $\le 2N$. For every integer $R\ge 1$ denote by $U_{N,R}\subset U_N$ the
set of all rooted $R$-balls in graphs from $U_{N}$.
Finally, denote by $U_{N,R}^f$ the set of all $K\in U_{N,R}$ such that $K$ is a folded tree, where the root
vertex is not of degree 1 and such that the distance from the
root to every vertex of degree 1 of $K$ is equal to $R$.


Observe that every tree $K\in U_{N,R}^f$ admits a unique (and injective) $R_N$-graph morphism to
$X$ which sends the root in $K$ to the vertex $1\in VX$. Thus we can
think of trees $K\in U_{N,R}^f$ as subtrees of $X$ with root $1$.

For $R\ge 1$, $\Upsilon\in U_{N,R}$ and an $R_N$-graph
$\Delta$ denote by $J(\Upsilon,R,\Delta)$ the set of all those vertices $v$ in $\Delta$
such that the $R$-ball centered at $v$ in $\Delta$ is isomorphic as
a rooted $R_N$-graph to $\Upsilon$.

We say that a subset current $\mu\in \gcn$ is \emph{normalized} if the corresponding measure $\mu' = (\mathbf t)^{-1}(\mu)\in  \mathcal M_1(X)$ is of total mass $1$, $\mu'(\mathcal T_1(X))=1$.  Note that if $\mu\in \gcn$ is a nonzero subset current and $\mu'=(\mathbf t)^{-1}(\mu)\in \mathcal M_1(X)$ then for $c=\mu'(\mathcal T_1(X))$ the current $\frac{1}{c}\mu\in\gcn$ is normalized.

The following definition is an adaptation, to our notations and to our context, of the definition of random weak limit of finite rooted graphs introduced by Benjamini and Schramm, \cite{BS}.

\begin{defn}[Weak approximation]\label{defn:wa} 
Let $\mu\in \gcn$ be a normalized subset current. We say that a sequence of
$R_N$-graphs $\Delta_n$ \emph{weakly approximates} $\mu\in
\gcn$ if the
following conditions hold:
\begin{enumerate}
\item $\Delta_n\in U_{N}$ for every $n\ge 1$.
\item $\#V\Delta_n\to\infty$ as $n\to\infty$.
\item For every $R\ge 1$ and every $K\in U_{N,R}^f$ we have
\[
\lim_{n\to\infty} \frac{\#J(K,R,\Delta_n)}{\#V\Delta_n}=(K;\mu).
\]
\item For every 
$R\ge 1$ and every $\Upsilon\in U_{N,R}$ such that $\Upsilon\not\in U_{N,R}^f$ we have
\[
\lim_{n\to\infty} \frac{\#J(\Upsilon,R,\Delta_n)}{\#V\Delta_n}=0.
\]
\end{enumerate}
In this case we write
\[
\Delta_n \xrightarrow{\rm w.a.} \mu.
\]
\end{defn}

\begin{lem}\label{lem:wa}
Let $\mu\in \gcn$ be a normalized current and let $\Delta_n$ be a sequence of
finite connected $R_N$-core graphs such that
$\Delta_n \xrightarrow{\rm w.a.} \mu$.
Then
\[
\lim_{n\to\infty} \frac{1}{\# V\Delta_n} \mu_{\Delta_n}=\mu \text{  in
   } \gcn.
\]
\end{lem}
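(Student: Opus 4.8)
The plan is to unpack the definition of weak-$*$ convergence in $\gcn$ via Proposition~\ref{top:GC}: it suffices to show that for every finite non-degenerate subtree $K$ of $X = \widetilde{R_N}$ we have
\[
\lim_{n\to\infty} \frac{1}{\#V\Delta_n}\, (K; \mu_{\Delta_n})_{\alpha} = (K; \mu),
\]
where $(K; \mu_{\Delta_n})_{\alpha} = (K; \Delta_n)$ is the number of occurrences of $K$ in $\Delta_n$ by Proposition~\ref{prop:wd}. So the entire problem reduces to comparing the \emph{occurrence count} $(K; \Delta_n)$ with the \emph{ball-type statistics} $\#J(K', R, \Delta_n)$ that appear in the definition of weak approximation.

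First I would fix $K$ and let $R$ be large enough that $K$ is contained in the $R$-ball $B_X(R)$ around the root $1$. The key combinatorial observation is a \emph{bookkeeping bijection}: an occurrence $f\colon K\to \Delta_n$ is, by Definition~\ref{defn:occur}, a $\Gamma$-graph morphism that is a covering map at every non-leaf vertex of $K$; since $\Delta_n$ is folded, such an $f$ is determined by the image $f(x_0)$ of a chosen base-vertex $x_0$ of $K$, together with the requirement that the local structure of $\Delta_n$ around $f(x_0)$ (out to the relevant radius) is compatible with $K$. Concretely, I want to partition the vertices $v\in V\Delta_n$ according to the isomorphism type $\Upsilon_v \in U_{N,R}$ of the rooted $R$-ball of $\Delta_n$ centered at $v$, and observe: (a) if $\Upsilon_v$ is \emph{not} a folded tree (i.e. $\Upsilon_v\notin U_{N,R}^f$), these vertices contribute a negligible fraction by condition~(4) of Definition~\ref{defn:wa}; (b) if $\Upsilon_v = \Upsilon \in U_{N,R}^f$, then the number of occurrences of $K$ in $\Delta_n$ whose base-vertex maps to $v$ depends only on $\Upsilon$ — call it $c(K,\Upsilon)$ — and in fact, by the local-covering characterization of occurrences, $c(K,\Upsilon)$ equals the number of occurrences of $K$ in the tree $\Upsilon$ itself (with $f(x_0)$ ranging over all vertices of $\Upsilon$ at the appropriate distance from $\Upsilon$'s root). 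Summing,
\[
(K; \Delta_n) = \sum_{\Upsilon \in U_{N,R}^f} c(K,\Upsilon)\,\#J(\Upsilon, R, \Delta_n) \;+\; (\text{error from non-tree balls}),
\]
where the error term is $O\!\left(\sum_{\Upsilon\notin U_{N,R}^f} \#J(\Upsilon, R, \Delta_n)\right)$ with the implied constant depending only on $K$ and $R$ (since each vertex contributes at most a bounded number $\le (2N)^{|K|}$ of morphisms). Dividing by $\#V\Delta_n$, letting $n\to\infty$, and using conditions~(3) and~(4) gives
\[
\lim_{n\to\infty} \frac{(K;\Delta_n)}{\#V\Delta_n} = \sum_{\Upsilon\in U_{N,R}^f} c(K,\Upsilon)\,(\Upsilon; \mu).
\]
Finally I must identify the right-hand side with $(K;\mu)$. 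Working on the measure side via Proposition~\ref{prop:m1}, write $\mu' = \mathbf{t}^{-1}(\mu)\in \mathcal M_1(X)$; then $(\Upsilon;\mu) = \mu'(\mathcal T Cyl_\alpha(\Upsilon))$ and $(K;\mu) = \mu'(\mathcal T Cyl_\alpha(K))$. The $R$-balls $\Upsilon\in U_{N,R}^f$ partition $\mathcal T_1(X)$ into the cylinders $\mathcal T Cyl_\alpha(\Upsilon)$ (up to a $\mu'$-null set of trees whose $R$-ball is not a tree of the required form, handled by condition~(4) again), and for $Y\in \mathcal T Cyl_\alpha(\Upsilon)$ the number of sub-occurrences of $K$ inside the root-portion of $Y$ is exactly $c(K,\Upsilon)$ — each such sub-occurrence corresponds under root-change $r_g$ to one of the $g$-translates placing $Y$ into $\mathcal T Cyl_\alpha(K)$. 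Using $F_N$-invariance (root-change invariance) of $\mu'$, a counting/disintegration argument then yields $\sum_\Upsilon c(K,\Upsilon)\mu'(\mathcal T Cyl_\alpha(\Upsilon)) = \mu'(\mathcal T Cyl_\alpha(K)) = (K;\mu)$, completing the proof.

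The main obstacle I expect is the last identification step: making precise the claim that $\sum_{\Upsilon\in U_{N,R}^f} c(K,\Upsilon)\,(\Upsilon;\mu) = (K;\mu)$. This is genuinely a statement about the root-change-invariant measure $\mu'$, and it amounts to a change-of-root counting identity — one is summing, over basepoints $v$ in a tree $Y$, indicators that the $K$-shaped neighborhood sits correctly, and must match this against the single cylinder condition for $K$ at the root. The cleanest route is probably to verify the equality directly on each cylinder $\mathcal T Cyl_\alpha(\Upsilon)$ for $\Upsilon$ large (so $K$ fits well inside), reducing it to the finite combinatorial statement that occurrences of $K$ in a finite tree $\Upsilon$ are counted correctly by summing over appropriately rooted balls; the invariance of $\mu'$ under all $r_g$ is what lets one pass from "$K$ at the root" to "$K$ somewhere, counted with the right multiplicity." A secondary nuisance is the uniform boundedness of the error term, but since $\Delta_n\in U_N$ has all vertex degrees $\le 2N$, the number of $\Gamma$-graph morphisms out of a fixed finite $K$ based at a given vertex is bounded by a constant depending only on $K$ and $N$, so the error is genuinely $O(\#\{v : \Upsilon_v\notin U_{N,R}^f\})$ and condition~(4) kills it.
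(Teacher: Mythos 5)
Your overall strategy is sound and at its core the same as the paper's: compare the occurrence count $(K;\Delta_n)$ with the ball statistics $\#J(\cdot,R,\Delta_n)$, and use condition~(4) of Definition~\ref{defn:wa} together with the degree bound $\le 2N$ to discard the vertices whose $R$-ball is not a folded tree. The organizational difference is this: the paper first takes $K\in U_{N,R}^f$, i.e.\ $K$ is itself a ball type, in which case an occurrence $f:K\to\Delta_n$ that is an embedding is literally the same datum as a vertex of $J(K,R,\Delta_n)$, so $(K;\Delta_n)=\#J(K,R,\Delta_n)+(K;\Delta_n)_{bad}$ and condition~(3) gives the limit directly, with the general $K$ reduced to this case afterwards. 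You instead stratify an arbitrary $K$ by the ball type $\Upsilon$ at the image of the basepoint, which buys you a treatment of general $K$ in one pass but forces you to prove the extra identity $\sum_{\Upsilon}c(K,\Upsilon)\,(\Upsilon;\mu)=(K;\mu)$.

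That identity is true, but your route to it is where the argument needs repair. Since $\Delta_n$ is folded, there is at most one $R_N$-graph morphism $K\to\Delta_n$ with $f(x_0)=v$, so $c(K,\Upsilon)\in\{0,1\}$: it is the indicator that $K\subseteq\Upsilon$ (with $x_0$ at the root) and $Lk_K(x)=Lk_\Upsilon(x)$ at every interior vertex $x$ of $K$, which is exactly the condition $\mathcal SCyl_\alpha(\Upsilon)\subseteq\mathcal SCyl_\alpha(K)$. Your parenthetical reading of $c(K,\Upsilon)$ as counting occurrences ``with $f(x_0)$ ranging over all vertices of $\Upsilon$'' would make the displayed decomposition of $(K;\Delta_n)$ false (each occurrence would be counted once for every nearby vertex), and it is what leads you to the unnecessary root-change/mass-transport argument at the end. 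With the correct based reading, the final identity is simply the finite disjoint refinement
\[
\mathcal SCyl_\alpha(K)=\bigsqcup_{\Upsilon:\; c(K,\Upsilon)=1}\mathcal SCyl_\alpha(\Upsilon)
\]
obtained by iterating Lemma~\ref{lem:disj} until every leaf is at distance $R$ from $x_0$; no invariance of $\mu$ under root change is involved. With that repair, and with $R$ taken strictly larger than the radius of $K$ about $x_0$ so that the links of the interior vertices of $f(K)$ are visible inside the $R$-ball, your argument goes through.
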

\begin{proof}

Let $R\ge 1$ and let $K\in U_{N,R}^f$. Recall that we think of $K$
as a subtree of $X$ with the root of $K$ being the vertex $1\in VX$.
Denote by $(K;\Delta_n)_{bad}$ the number of all
occurrences $f:K\to\Delta_n$ such that $f$ is not an embedding. Then
the $R$-ball in $\Delta_n$ around the $f$-image of the base-point of
$K$ is not a tree. Since the set $U_{N,R}$ is finite and $\Delta_n \xrightarrow{\rm w.a.} \mu$, the definition
of weak approximation then implies that
\[
\lim_{n\to\infty} \frac{(K;\Delta_n)_{bad}}{\#V\Delta_n}=0.
\]
We also have $(K;\Delta_n)=\#J(K,R,\Delta_n)+(
K;\Delta_n)_{bad}$. 
Also, since $\Delta_n \xrightarrow{\rm w.a.} \mu$, we have
\[
\lim_{n\to\infty} \frac{\#J(K,R,\Delta_n)}{\#V\Delta_n}=(K;\mu).
\]
Therefore
\[
\lim_{n\to\infty} \frac{(K;\Delta_n)}{\#V\Delta_n}=(K;\mu),
\] 
that is,
\[
\lim_{n\to\infty} \frac{(K;\mu_{\Delta_n})}{\#V\Delta_n}=(K;\mu).\tag{$\spadesuit$}
\]

Now let $K$ be an arbitrary finite non-degenerate subtree of $X$
containing $1\in VX$. Put $R$ to be the maximum of the distances in
$K$ from $1$ to other vertices of $K$. Then $K$ is precisely the
$R$-ball in $K$ centered at $1$, so that $K\in U_{N,R}^f$. Therefore
$(\spadesuit)$ holds for every finite subtree of $X$ containing $1$
and hence, by $F_N$-invariance of subset currents, for every
finite subtree $K$ of $X$. Hence
\[
\lim_{n\to\infty} \frac{1}{\# V\Delta_n} \mu_{\Delta_n}=\mu \text{  in
   } \gcn,
\]
as required.
\end{proof}

\begin{lem}\label{lem:blowup}
Let $\mu\in \gcn$ be a normalized current and let $\Delta_n$ be a sequence of
$R_N$-graphs such that $\Delta_n \xrightarrow{\rm w.a.} \mu$.
Then there exists a sequence of folded $R_N$-graphs $\Delta_n'$ such that $\Delta_n' \xrightarrow{\rm w.a.} \mu$.
\end{lem}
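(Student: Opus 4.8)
The plan is to obtain $\Delta_n'$ by a purely local surgery performed at the (few) vertices where $\Delta_n$ fails to be folded, and then to check that this surgery is confined to a vanishing proportion of $V\Delta_n$ so that it affects none of the limits in Definition~\ref{defn:wa}. Note that the lemma only asks for $\Delta_n'$ to be a folded $R_N$-graph in $U_N$, with no requirement of connectivity or of being a core graph, so a crude edge-deletion is permissible.

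First I would let $V_n^{bad}\subseteq V\Delta_n$ be the set of vertices at which $\Delta_n$ is not folded, i.e. the vertices $v$ admitting two distinct edges with origin $v$ carrying the same label from $A^{\pm 1}$. If $v\in V_n^{bad}$ then the radius-$1$ ball of $\Delta_n$ at $v$ is not a folded tree, hence it represents some $\Upsilon\in U_{N,1}$ with $\Upsilon\notin U_{N,1}^f$. Since $U_{N,1}$ is finite and the sets $J(\cdot\,,1,\Delta_n)$ over all $\Upsilon\in U_{N,1}$ partition $V\Delta_n$, property~(4) of Definition~\ref{defn:wa} (applied with $R=1$) gives $\#V_n^{bad}/\#V\Delta_n\to 0$. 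Because every graph in $U_N$ has all vertex degrees $\le 2N$, for each fixed $R\ge 1$ the $R$-neighbourhood $W_{n,R}:=\{v\in V\Delta_n: d_{\Delta_n}(v,V_n^{bad})\le R\}$ satisfies $\#W_{n,R}\le c_{N,R}\,\#V_n^{bad}$ for a constant $c_{N,R}$ depending only on $N$ and $R$, so $\#W_{n,R}/\#V\Delta_n\to 0$ as well.

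Next I would define $\Delta_n'$ to be the $R_N$-graph with the same vertex set $V\Delta_n$ obtained from $\Delta_n$ by deleting every edge incident to a vertex of $V_n^{bad}$. Then $\#V\Delta_n'=\#V\Delta_n\to\infty$ and $\Delta_n'\in U_N$ (deleting edges only lowers degrees), and $\Delta_n'$ is folded: a vertex $u\notin V_n^{bad}$ has its link in $\Delta_n'$ contained in its (already folded) link in $\Delta_n$, while a vertex $u\in V_n^{bad}$ has empty link in $\Delta_n'$. Since only edges touching $V_n^{bad}$ were removed, for every $v\notin W_{n,R}$ no edge of the radius-$R$ ball of $v$ in $\Delta_n$ is removed, whence $B_{\Delta_n'}(v,R)=B_{\Delta_n}(v,R)$ as rooted $R_N$-graphs.

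Finally I would combine these observations. Fix $R\ge 1$ and $\Upsilon\in U_{N,R}$; a vertex $v\notin W_{n,R}$ contributes equally to $\#J(\Upsilon,R,\Delta_n)$ and to $\#J(\Upsilon,R,\Delta_n')$ because its $R$-ball is unchanged, so $|\#J(\Upsilon,R,\Delta_n')-\#J(\Upsilon,R,\Delta_n)|\le\#W_{n,R}$. Dividing by $\#V\Delta_n=\#V\Delta_n'$ and letting $n\to\infty$ shows that the frequencies $\#J(\Upsilon,R,\Delta_n')/\#V\Delta_n'$ and $\#J(\Upsilon,R,\Delta_n)/\#V\Delta_n$ have the same limit. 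As $\Delta_n\xrightarrow{\rm w.a.}\mu$, that limit equals $(K;\mu)$ when $\Upsilon=K\in U_{N,R}^f$ and equals $0$ when $\Upsilon\notin U_{N,R}^f$, so $\Delta_n'\xrightarrow{\rm w.a.}\mu$. The only delicate point is the counting in the second and fourth paragraphs — verifying that $V_n^{bad}$, and hence the region in which $\Delta_n'$ differs from $\Delta_n$, really has vanishing density — but this follows directly from property~(4) and the uniform degree bound, so I do not anticipate a genuine obstruction here.
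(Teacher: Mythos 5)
Your proof is correct and rests on the same key observation as the paper's: by condition~(4) of Definition~\ref{defn:wa} applied with $R=1$, the non-folded vertices form a vanishing proportion of $V\Delta_n$, and since degrees are bounded by $2N$, any surgery confined to their $R$-neighbourhoods changes each count $\#J(\Upsilon,R,\cdot)$ by $o(\#V\Delta_n)$ and hence preserves weak approximation. The only real difference is the surgery itself: you delete all edges meeting a bad vertex, whereas the paper performs a ``blow-up,'' cutting each non-folded vertex open into $\deg(v)\le 2N$ new vertices and attaching these to a copy of a fixed labelled circle $\Upsilon_N$ of length $N^2$ so that the result is folded. Your version is simpler and perfectly adequate for the lemma as stated (isolated vertices are allowed in a folded $R_N$-graph), but note one downstream consequence: your $\Delta_n'$ acquires degree-zero vertices at the former bad vertices (and possibly new degree-zero or degree-one vertices among their neighbours), and the subsequent Lemma~\ref{lem:loops}, which upgrades folded graphs to core graphs, only attaches loops at degree-one vertices. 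This is trivially repaired --- either attach a loop at each isolated vertex as well, or discard the isolated vertices outright, which again only removes a vanishing fraction of $V\Delta_n'$ --- but it is worth recording, since the paper's circle construction is designed precisely to avoid creating such vertices. One further cosmetic point: to guarantee that the $R$-ball of a vertex $v$ with $d(v,V_n^{bad})>R$ is literally untouched you may want to take the $(R+1)$-neighbourhood of $V_n^{bad}$ rather than the $R$-neighbourhood, depending on the convention for which edges belong to an $R$-ball; this changes only the constant $c_{N,R}$ and affects nothing.
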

\begin{proof}

Let us say that a vertex $v$ of $\Delta_n$ is \emph{folded} if the ball of
radius $1$ in $\Delta_n$ around $v$ is a folded $R_N$-graph. 

Let $\Upsilon_N$ be the $R_N$-graph which is a simplicial circle of length $N^2$ with the
label $a_1^Na_2^N\dots a_N^N$.
For every non-folded vertex $v$ of $\Delta_n$ choose a copy
$\Upsilon_{N,v}$ of $\Upsilon_N$.
We now perform a \lq\lq blow-up\rq\rq\ operation on $\Delta_n$ as follows.  
Simultaneously, for every non-folded vertex $v$ of $\Delta_n$ we cut $\Delta_n$ open
at $v$, so that $v$ gives $deg_{\Delta_n}(v)\le 2N$ new vertices, and then
we attach these new vertices to $\Upsilon_{N,v}$ is a way that 
the resulting graph is folded.
The resulting folded $R_N$-graph is denoted by $\Delta_n'$.

The definition of weak approximation implies that $\lim_{n\to\infty}
{b_n}/{\#V\Delta_n}=0$, where $b_n$ is the number of non-folded
vertices of $\Delta_n$.
Since the number of vertices in the graph $\Upsilon_N$ (used to
blow-up non-folded vertices of $\Delta_n$) is equal to $N^2$ and does
not depend on $n$, the definition of weak approximation now implies
that $\Delta_n' \xrightarrow{\rm w.a.} \mu$, as required.

\end{proof}

\begin{lem}\label{lem:loops}
Let $\mu\in \gcn$ be a normalized current and let $\Delta_n$ be a sequence of
folded $R_N$-graphs such that $\Delta_n \xrightarrow{\rm w.a.} \mu$.
Then there exists a sequence of $R_N$-core graphs $\Delta_n'$ such that $\Delta_n' \xrightarrow{\rm w.a.} \mu$.
\end{lem}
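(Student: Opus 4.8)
\emph{Proof proposal.} The plan is to build $\Delta_n'$ from $\Delta_n$ by \emph{adding} a bounded gadget at each ``bad'' vertex rather than by deleting anything. Passing to the core of $\Delta_n$ would be the wrong move: the core can be empty (e.g.\ if $\Delta_n$ is a long segment) even though $\#V\Delta_n\to\infty$, and more generally deleting hanging tree branches destroys the weak-approximation counts. Instead I would attach, at every vertex $v$ of $\Delta_n$ of degree $\le 1$, a single loop-edge joining $v$ to itself: if $\deg v=1$ and the unique edge at $v$ carries a label $a_i^{\pm1}$, attach a loop labelled $a_j$ for some $j\ne i$ (possible since $N\ge2$); if $\deg v=0$, attach a loop labelled $a_1$. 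At $v$ the incident labels then become $a_j,a_j^{-1}$ together with at most one $a_i^{\pm1}$ with $i\ne j$, so the resulting graph $\Delta_n'$ is still folded; every former degree-$\le1$ vertex now has degree $2$ or $3$ and no other vertex is touched, so $\Delta_n'$ has no degree-$\le1$ vertex and is therefore an $R_N$-core graph. Since the only degrees that change go from $\le1$ to $\le3\le2N$, we have $\Delta_n'\in U_N$, and $\#V\Delta_n'=\#V\Delta_n\to\infty$; this gives conditions (1) and (2) of Definition~\ref{defn:wa} for $\Delta_n'$.

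The one quantitative ingredient I would extract next is that, with $L_n$ denoting the number of degree-$\le1$ vertices of $\Delta_n$, one has $L_n/\#V\Delta_n\to0$. Indeed, for such a vertex $v$ the $1$-ball around $v$ in $\Delta_n$ is a rooted graph whose root has degree $\le1$, hence is not an element of $U_{N,1}^f$; since $U_{N,1}$ is finite, summing condition~(4) of $\Delta_n\xrightarrow{\rm w.a.}\mu$ over the finitely many such ball-types yields the claim.

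Then I would verify conditions (3) and (4) for $\Delta_n'$. Fix $R\ge1$ and $\Upsilon\in U_{N,R}$, let $V_n''$ be the set of vertices of $\Delta_n$ within distance $R$ (in $\Delta_n$) of a degree-$\le1$ vertex, and $V_n'$ its complement in $V\Delta_n$. Since $\Delta_n\in U_N$, every $R$-ball in $\Delta_n$ has at most $C=C(N,R)$ vertices, so $\#V_n''\le C L_n$ and hence $\#V_n''/\#V\Delta_n\to0$. Attaching loops at degree-$\le1$ vertices changes no distance between old vertices and places every new loop at a degree-$\le1$ vertex, so for $v\in V_n'$ no added loop meets the $R$-ball around $v$, and therefore $B_R^{\Delta_n'}(v)=B_R^{\Delta_n}(v)$ as rooted $R_N$-graphs. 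Consequently the contributions of $V_n'$ to $\#J(\Upsilon,R,\Delta_n')$ and to $\#J(\Upsilon,R,\Delta_n)$ coincide, while those of $V_n''$ are each at most $\#V_n''$, so
\[
\bigl|\,\#J(\Upsilon,R,\Delta_n')-\#J(\Upsilon,R,\Delta_n)\,\bigr|\le 2\,\#V_n''.
\]
Dividing by $\#V\Delta_n'=\#V\Delta_n$ and letting $n\to\infty$ makes the error term vanish, so $\#J(\Upsilon,R,\Delta_n')/\#V\Delta_n'$ has the same limit as $\#J(\Upsilon,R,\Delta_n)/\#V\Delta_n$, namely $(\Upsilon;\mu)$ if $\Upsilon\in U_{N,R}^f$ and $0$ otherwise, by conditions (3) and (4) for $\Delta_n$. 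This is exactly conditions (3) and (4) for $\Delta_n'$, so $\Delta_n'\xrightarrow{\rm w.a.}\mu$.

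I expect the only genuine difficulty to be the very first step: resisting the natural temptation to ``take the core'' and realizing instead that the finitely many degree-$\le1$ vertices should be repaired in place by a bounded gadget. Once that is done, condition~(4) of weak approximation is precisely the statement that the repaired region is statistically invisible, and everything else reduces to the routine ball-counting estimate above.
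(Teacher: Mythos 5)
Your construction is exactly the one the paper uses: attach at each degree-$\le 1$ vertex a loop-edge whose label avoids the label of the existing incident edge, then observe that condition (4) of weak approximation makes the modified vertices statistically negligible. The paper states this in two lines and leaves the ball-counting verification to the reader, so your write-up is simply a more detailed version of the same argument and is correct.
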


\begin{proof}
For every vertex $v$ of degree $1$ in $\Delta_n$ attach a new
loop-edge at $v$ with label $a\in A$ such that $a^{\pm 1}$ is
different from the label of the unique edge of $\Delta_n$ starting at
$v$. Denote the resulting graph by $\Delta_n'$.

Then $\Delta_n'$ is a folded and cyclically reduced $R_N$-graph. It is easy to see,
from the definition of weak approximation, that $\Delta_n' \xrightarrow{\rm w.a.} \mu$.
\end{proof}

\subsection{Rational currents are dense}\label{subsec:result}

\begin{thm}\label{thm:dense}
The set $\mathcal SCurr_r(F_N)$ of all rational currents is a dense
subset of $\gcn$.
\end{thm}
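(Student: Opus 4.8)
The plan is to combine Proposition~\ref{prop:span} (rational currents are dense in their own linear span) with the Benjamini--Schramm style approximation machinery developed in Subsection~\ref{subsec:approx} together with the theorems of Bowen and Elek cited in the introduction. The argument proceeds in two reductions. First, by Proposition~\ref{prop:span}, it suffices to show that $Span(\mathcal SCurr_r(F_N))$ is dense in $\gcn$, i.e.\ that an arbitrary $\mu\in\gcn$ can be approximated by finite $\R_{\ge 0}$-linear combinations of counting currents. Second, by scaling it is enough to treat a \emph{normalized} current $\mu\in\gcn$, since every nonzero current is a positive scalar multiple of a normalized one, and approximating $\frac{1}{c}\mu$ by rational currents $r_n\eta_{H_n}$ yields the approximation $c r_n\eta_{H_n}\to\mu$.

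So fix a normalized $\mu\in\gcn$; via the homeomorphism $\mathbf t$ of Proposition~\ref{prop:m1} we regard $\mu$ as a root-invariant probability measure $\mu'$ on the space $\mathcal T_1(X)$ of rooted infinite subtrees of $X=\widetilde R_N$ without degree-one vertices. The key input is that such a measure is a \emph{random weak limit of finite rooted graphs} in the sense of Definition~\ref{defn:wa}: this is exactly where the results of Bowen~\cite{Bow03,Bow09} and Elek~\cite{Elek} on approximability of unimodular/invariant measures on rooted trees (respectively graphs) enter. Concretely, these results produce a sequence of finite $R_N$-graphs $\Delta_n\in U_N$ with $\#V\Delta_n\to\infty$ such that $\Delta_n\xrightarrow{\rm w.a.}\mu$, meaning the empirical distribution of $R$-neighborhood types in $\Delta_n$ converges to $\mu'$ for every radius $R$; in particular the mass escaping from the ``tree-like'' configurations $U_{N,R}^f$ to the non-tree configurations goes to $0$. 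I would state this explicitly as the one external ingredient being imported, with the appropriate citations.

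Having such a weakly approximating sequence $\Delta_n$, I would successively repair it to a sequence of finite connected $R_N$-core graphs without disturbing the weak approximation: apply Lemma~\ref{lem:blowup} to make the graphs folded, then Lemma~\ref{lem:loops} to remove degree-one vertices, obtaining a sequence of (finite) $R_N$-core graphs $\Delta_n'$ with $\Delta_n'\xrightarrow{\rm w.a.}\mu$. (Passing to connected components is harmless since by the remark after Proposition~\ref{prop:wd} the current $\mu_{\Delta_n'}$ is the sum of the currents of the components, and one may take $\Delta_n'$ connected after the two repair steps, or simply work componentwise.) Now Lemma~\ref{lem:wa} applies and gives
\[
\lim_{n\to\infty}\frac{1}{\#V\Delta_n'}\,\mu_{\Delta_n'}=\mu\quad\text{in }\gcn.
\]
Finally, by Corollary~\ref{cor:cc} each $\mu_{\Delta_n'}$ equals the counting current $\eta_{H_n}$ of a nontrivial finitely generated subgroup $H_n\le F_N$ (namely $H_n=\pi_1(\Delta_n')$ up to conjugacy), so $\frac{1}{\#V\Delta_n'}\eta_{H_n}$ is a rational current and these converge to $\mu$. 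Rescaling back handles the general (non-normalized) case, completing the proof.

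\textbf{Main obstacle.} The technical heart is not any of the graph-combinatorial lemmas, which are routine bookkeeping, but the assertion that a root-invariant probability measure on $\mathcal T_1(X)$ is a random weak limit of finite rooted $R_N$-graphs. This is genuinely a deep fact and is precisely what is borrowed from Bowen~\cite{Bow03,Bow09} and Elek~\cite{Elek}; one must check that their hypotheses (sofic/amenable-type approximability statements, or hyperfiniteness in Elek's formulation) are met by measures supported on \emph{trees} with the root-change equivalence relation, and that the finite approximants can be taken with the bounded-degree constraint $\le 2N$ built into $U_N$. The secondary subtlety is ensuring the repair operations (blow-up at non-folded vertices, adding loops at degree-one vertices) genuinely preserve \emph{all four} conditions of Definition~\ref{defn:wa}, including the vanishing of the non-tree statistics $\#J(\Upsilon,R,\Delta_n)/\#V\Delta_n$; this is exactly what Lemmas~\ref{lem:blowup} and~\ref{lem:loops} assert, so I would invoke them as black boxes.
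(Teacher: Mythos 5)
Your proposal is correct and follows essentially the same route as the paper: normalize, pass to a root-invariant measure on $\mathcal T_1(X)$ via Proposition~\ref{prop:m1}, import the Bowen/Elek approximation to get a weakly approximating sequence of finite $R_N$-graphs, repair it with Lemmas~\ref{lem:blowup} and~\ref{lem:loops}, apply Lemma~\ref{lem:wa}, and reduce from the linear span to the rational currents themselves via Proposition~\ref{prop:span}. You also correctly identify the external approximation result as the genuine content and the graph repairs as bookkeeping, which matches the paper's presentation.
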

\begin{proof}
Let $\mu\in \gcn$ be an arbitrary nonzero current. Then for some $c>0$ the current $\overline\mu:=\frac{1}{c}\mu$ is normalized.

A slight modification of the main result of~\cite{Elek}, together with
Proposition~\ref{prop:m1} imply that there exists a sequence
$\Delta_n$ of $R_N$-graphs such that $\Delta_n \xrightarrow{\rm
  w.a.} \overline\mu$.
By Lemma~\ref{lem:blowup} and Lemma~\ref{lem:loops} we may modify the sequence $\Delta_n$ to get a new sequence of $R_N$-graphs, which we again denote $\Delta_n$, such that each
$\Delta_n$ is a core graph and that $\Delta_n \xrightarrow{\rm  w.a.} \overline\mu$.
Now Lemma~\ref{lem:wa} implies that 
\[
\lim_{n\to\infty} \frac{1}{\# V\Delta_n} \mu_{\Delta_n}=\overline\mu \text{  in
   } \gcn.
\]

Let $\Delta_{n,1}, \dots \Delta_{n,k_n}$ be the connected components
of $\Delta_n$. Then 
\[
\mu_{\Delta_n}=\mu_{\Delta_{n,1}}+\dots+\mu_{\Delta_{n,k_n}}
\]
and we see that $\overline\mu$ belongs to the closure of the $\mathbb R_{\ge
  0}$-span of $\mathcal SCurr_r(F_N)$. 
Proposition~\ref{prop:span} now implies that $\overline\mu$ belongs to the
closure of $\mathcal SCurr_r(F_N)$.  Hence $\mu=c\overline \mu$ also belongs to the
closure of $\mathcal SCurr_r(F_N)$. 
Since $\mu\in \gcn$
was an arbitrary nonzero current, this completes the proof.
\end{proof}

\begin{rem}
In the proof of Theorem~\ref{thm:dense}, instead of the result of Elek~\cite{Elek} invoked above, alternatively one can use the results of Bowen~\cite{Bow03,Bow09}. 
\end{rem}

\section{The action of $\Out(F_N)$}\label{sec:action}

\subsection{Defining the $\Out(F_N)$-action}\label{subsec:defaction}

If $\phi\in \Aut(F_N)$ is an automorphism, then $\phi$ is a
quasi-isometry of $F_N$ and hence $\phi$ extends to a homeomorphism
(which we will still denote by $\phi$) $\phi:\partial F_N\to\partial
F_N$. Thus for $S\in \mathfrak C_N$ we have $\phi(S)\in \mathfrak C_N$.
For a subset $U\subseteq \mathfrak C_N$ we write $\phi(U):=\{\phi(S):
S\in U\}$. Thus $\Aut(F_N)$ has a natural action on $\mathfrak C_N$ and
on the set of subsets of $\mathfrak C_N$.

\begin{defn}[The action of $\Aut(F_N)$]\label{defn:action}
Let $\phi\in \Aut(F_N)$ and let $\mu\in \gcn$.

Define a measure $\phi\mu$ on $\mathfrak C_N$ as follows:

For a Borel subset $U\subseteq \mathfrak C_N$ we put
\[
\phi\mu(U):=\mu\left(\phi^{-1}(U)\right).
\]
\end{defn}

\begin{prop}\label{prop:action}
Let $N\ge 2$.
\begin{enumerate}
\item For any $\phi\in \Aut(F_N)$ and $\mu\in \gcn$ we have
$\phi\mu\in \gcn$.

\item For any $\phi_1,\phi_2\in \Aut(F_N)$ and $\mu\in \gcn$ we have $(\phi_1\phi_2)(\mu)=\phi_1(\phi_2\mu)$.

\item For any $\phi\in \Aut(F_N)$ $\phi:\gcn\to
\gcn$ is an $\mathbb R_{\ge 0}$-linear homeomorphism.

\item If $\phi\in \Aut(F_N)$ is an inner automorphism then $\phi\mu=\mu$
for every $\mu\in \gcn$.
\end{enumerate}
\end{prop}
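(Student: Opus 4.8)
The plan is to prove the four parts in order, since each is a routine consequence of the definitions once the previous ones are in place. First I would establish part (1): that $\phi\mu$ is again a subset current. Since $\phi:\partial F_N\to\partial F_N$ is a homeomorphism, the induced map $\phi:\mathfrak C_N\to\mathfrak C_N$ is a homeomorphism, so $\phi\mu$ is a well-defined positive Borel measure (pushforward of a Borel measure under a homeomorphism). Local finiteness of $\phi\mu$ follows because $\phi^{-1}$ maps compact subsets of $\mathfrak C_N$ to compact subsets, and $\mu$ is locally finite. For $F_N$-invariance, fix $g\in F_N$ and let $\Phi\in\Aut(F_N)$ be the chosen representative; for a Borel set $U\subseteq\mathfrak C_N$ one computes $(\phi\mu)(gU)=\mu(\phi^{-1}(gU))=\mu(\Phi^{-1}(g)\,\phi^{-1}(U))=\mu(\phi^{-1}(U))=(\phi\mu)(U)$, where the second equality uses the equivariance $\phi^{-1}(gS)=\Phi^{-1}(g)\phi^{-1}(S)$ of the boundary action under an automorphism (this is just the fact that $\Phi$ conjugates the translation action to itself, which passes to $\partial F_N$), and the third uses $F_N$-invariance of $\mu$.

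Next I would do part (2), the composition law. For a Borel set $U$ we have $((\phi_1\phi_2)\mu)(U)=\mu((\phi_1\phi_2)^{-1}(U))=\mu(\phi_2^{-1}(\phi_1^{-1}(U)))=(\phi_2\mu)(\phi_1^{-1}(U))=(\phi_1(\phi_2\mu))(U)$, which is immediate from the definition. Part (4) is equally direct: if $\phi$ is inner, say $\phi=\mathrm{ad}_g$, then $\phi$ acts on $\partial F_N$, hence on $\mathfrak C_N$, as the translation by $g$, so $\phi\mu(U)=\mu(g^{-1}U)=\mu(U)$ by $F_N$-invariance of $\mu$.

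For part (3) I would split into three claims. Linearity over $\mathbb R_{\ge 0}$ is immediate from the definition of pushforward, since $(\phi(c_1\mu_1+c_2\mu_2))(U)=(c_1\mu_1+c_2\mu_2)(\phi^{-1}(U))=c_1\mu_1(\phi^{-1}(U))+c_2\mu_2(\phi^{-1}(U))=(c_1\phi\mu_1+c_2\phi\mu_2)(U)$. That $\phi$ is a bijection of $\gcn$ follows from part (2) together with part (1) applied to $\phi$ and $\phi^{-1}$: $\phi^{-1}$ is a two-sided inverse. Continuity is the only point requiring the concrete description of the topology from Proposition~\ref{top:GC}: it suffices to show that for each marking $\alpha$ and each finite non-degenerate subtree $K$ of $X=\widetilde\Gamma$, the function $\mu\mapsto(\phi\mu)(\mathcal SCyl_\alpha(K))$ is continuous on $\gcn$. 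But $(\phi\mu)(\mathcal SCyl_\alpha(K))=\mu(\phi^{-1}(\mathcal SCyl_\alpha(K)))$, and the key geometric observation is that $\phi^{-1}(\mathcal SCyl_\alpha(K))$ is again a finite union of subset cylinders. Indeed, $\phi$ (choosing a representative $\Phi$) induces an $F_N$-equivariant quasi-isometry between two Cayley-type trees; more cleanly, $\phi^{-1}(\mathcal SCyl_\alpha(K))$ is an open and compact subset of $\mathfrak C_N$, hence by Proposition~\ref{prop:topSN} it is a finite union of basic subset cylinders $\mathcal SCyl_\alpha(K_1),\dots,\mathcal SCyl_\alpha(K_r)$, which after refining (using Lemma~\ref{lem:disj} to subdivide) may be taken pairwise disjoint. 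Then $\mu(\phi^{-1}(\mathcal SCyl_\alpha(K)))=\sum_{j=1}^r(K_j;\mu)_\alpha$ is a finite sum of continuous functions of $\mu$, hence continuous. Applying the same argument to $\phi^{-1}$ in place of $\phi$ gives continuity of the inverse, so $\phi$ is a homeomorphism.

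The main obstacle is the continuity claim in part (3), specifically verifying cleanly that the $\phi$-preimage of a subset cylinder is a finite disjoint union of subset cylinders. The conceptual content is that an automorphism, while not a simplicial isomorphism, still behaves well on the totally disconnected compact space $\mathfrak C_N$: compact-open sets go to compact-open sets under a homeomorphism, and Proposition~\ref{prop:topSN} guarantees that compact-open subsets of $\mathfrak C_N$ are exactly the finite unions of subset cylinders. I would make sure to state this reduction explicitly rather than attempting any direct combinatorial description of how $\Phi$ distorts cylinders, since the abstract topological argument sidesteps all the bookkeeping.
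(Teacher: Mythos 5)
Your proposal is correct and follows essentially the same route as the paper: the $F_N$-invariance computation via $\phi^{-1}(g\xi)=\phi^{-1}(g)\phi^{-1}(\xi)$ is exactly the paper's argument for (1), and parts (2)--(4) are the same direct verifications. The only place you add substance is the continuity claim in (3), which the paper dismisses as following "directly" but which you justify by observing that $\phi^{-1}(\mathcal SCyl_\alpha(K))$ is compact-open and hence a finite disjoint union of cylinders --- this is precisely the argument the paper itself later records as Proposition~\ref{prop:local}, so you are consistent with, and slightly more complete than, the paper's own treatment.
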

\begin{proof}
To see (1) note that for any $g\in F_N$ and $\xi\in \partial F_N$ we
have $\phi^{-1}(g\xi)=\phi^{-1}(g)\phi^{-1}(\xi)$.
Hence if $U\subseteq \mathfrak C_N$ and $g\in F_N$ then, in view of
$F_N$-invariance of $\mu$, we have:
\[
(\phi\mu)(gU)=\mu(\phi^{-1}(gU))=\mu(\phi^{-1}(g) \phi^{-1}(U))=\mu(\phi^{-1}(U))=(\phi\mu)(U).
\]
Thus $\phi\mu$ is an $F_N$-invariant measure, and hence $\phi\mu\in
\gcn$ and (1) is established.

Now (2) and (3) follow directly from (1) and Definition~\ref{defn:action}.

Part (4) follows from the fact that if $\psi\in \Aut(F_N)$ is inner,
that is $\psi(g)=hgh^{-1}$ for every $g\in F_N$, then $\psi(\xi) =h\xi$
for every $\xi\in \partial F_N$. 
\end{proof}

Proposition~\ref{prop:action} shows that Definition~\ref{defn:action}
defines a left action of $\Aut(F_N)$ on $\gcn$ by
$\mathbb R_{\ge 0}$-linear homeomorphisms. Moreover, inner
automorphisms of $F_N$ lie in the kernel of this action, and therefore
this action factors through to the action of $\Out(F_N)$ on $\gcn$.

Also, by $\mathbb R_{\ge 0}$-linearity, we have
$\phi(r\mu)=r\phi(\mu)$ for $r\ge 0$, $\mu\in \gcn$ and
$\phi\in \Aut(F_N)$. Therefore the action of $\Aut(F_N)$ on $\gcn$ also yields an action of $\Aut(F_N)$ on 
$\pgcn$ given by $\phi[\mu]:=[\phi\mu]$, where $\mu\in\gcn, \mu\ne 0$ and $\phi\in \Aut(F_N)$. As before, the last
action factors through to the action of $\Out(F_N)$ on $\pgcn$.

\begin{prop}\label{prop:r-act}
Let $H\le F_N$ be a nontrivial finitely generated subgroup and let
$\phi\in \Aut(F_N)$. Then $\phi(\eta_H)=\eta_{\phi(H)}$.
\end{prop}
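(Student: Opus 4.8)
The plan is to reduce to the case where $H$ is its own commensurator, since both sides of the claimed equality are stable under the passage to finite-index subgroups. First I would recall that by Definition~\ref{defn:count}, if $H_0 = Comm_{F_N}(H)$ and $m = [H_0:H]$, then $\eta_H = m\,\eta_{H_0}$; on the other hand, $\phi(H_0)$ has index $m$ in $Comm_{F_N}(\phi(H)) = \phi(Comm_{F_N}(H)) = \phi(H_0)$ (the commensurator commutes with automorphisms, which is immediate from its definition), so $\eta_{\phi(H)} = m\,\eta_{\phi(H_0)}$. Combined with $\mathbb R_{\ge 0}$-linearity of the $\Aut(F_N)$-action (Proposition~\ref{prop:action}), it therefore suffices to prove $\phi(\eta_H) = \eta_{\phi(H)}$ under the additional hypothesis $H = Comm_{F_N}(H)$, in which case $\phi(H) = Comm_{F_N}(\phi(H))$ as well.

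Under this hypothesis, Definition~\ref{defn:count} gives $\eta_H = \sum_{H_1 \in [H]} \delta_{\Lambda(H_1)}$, a sum of Dirac measures indexed by the conjugates of $H$. The key geometric input is the behavior of limit sets under automorphisms: since $\phi$ extends to a homeomorphism of $\partial F_N$, one has $\phi(\Lambda(K)) = \Lambda(\phi(K))$ for every nontrivial subgroup $K \le F_N$. This is standard but I would include a one-line justification: if $h_n \in K$ with $h_n \to \xi$ in $F_N \cup \partial F_N$, then applying the homeomorphism $\phi$ of $F_N \cup \partial F_N$ (the bijection $\phi$ on $F_N$ together with its boundary extension) gives $\phi(h_n) \to \phi(\xi)$, and $\phi(h_n)$ ranges over $\phi(K)$; the reverse inclusion follows by applying $\phi^{-1}$.

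With this, I would compute directly using Definition~\ref{defn:action}: for a Borel set $U \subseteq \mathfrak C_N$,
\[
\phi(\eta_H)(U) = \eta_H(\phi^{-1}(U)) = \sum_{H_1 \in [H]} \delta_{\Lambda(H_1)}(\phi^{-1}(U)) = \#\{H_1 \in [H] : \Lambda(H_1) \in \phi^{-1}(U)\}.
\]
Now $\Lambda(H_1) \in \phi^{-1}(U)$ iff $\phi(\Lambda(H_1)) = \Lambda(\phi(H_1)) \in U$. As $H_1$ ranges over the conjugates of $H$, the subgroup $\phi(H_1)$ ranges (bijectively, since $\phi$ is an automorphism) over the conjugates of $\phi(H)$: indeed $\phi(gHg^{-1}) = \phi(g)\phi(H)\phi(g)^{-1}$, and distinct conjugates of $H$ map to distinct conjugates of $\phi(H)$ because $\phi$ is injective on subgroups. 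Hence the last count equals $\#\{H_2 \in [\phi(H)] : \Lambda(H_2) \in U\} = \sum_{H_2 \in [\phi(H)]} \delta_{\Lambda(H_2)}(U) = \eta_{\phi(H)}(U)$, which is exactly $\phi(H) = Comm_{F_N}(\phi(H))$ plugged into Definition~\ref{defn:count}. Since $U$ was arbitrary, $\phi(\eta_H) = \eta_{\phi(H)}$.

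I do not anticipate a genuine obstacle here; this is a routine verification once the pieces are lined up. The only point requiring mild care is the bookkeeping in the reduction step — making sure that the index $m$ and the conjugacy classes match up correctly under $\phi$, and that $Comm_{F_N}$ genuinely commutes with $\phi$ (which is clear from the definition of $Comm_G(H)$, as $\phi$ preserves the relations $[H : H \cap gHg^{-1}] < \infty$). One could also, as an alternative, deduce the statement from Corollary~\ref{cor:cc} by tracking how the $\Gamma$-core graph representing $[H]$ transforms under a change of marking induced by $\phi$, but the direct measure-theoretic argument above is cleaner and self-contained.
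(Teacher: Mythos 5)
Your proof is correct and follows essentially the same route as the paper's: reduce to the case $H=Comm_{F_N}(H)$ via linearity and the compatibility of commensurators and indices with $\phi$, then use $\phi(\Lambda(K))=\Lambda(\phi(K))$ together with the bijection $H_1\mapsto\phi(H_1)$ between $[H]$ and $[\phi(H)]$ to match the two sums of Dirac measures. No gaps.
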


\begin{proof}
For $m\ge 1$, if $[H:H_1]=m$, then
$[\phi(H):\phi(H_1)]=m$ and $\eta_{H_1}=m\eta_{H}$,
$\eta_{\phi(H_1)}=m\eta_{\phi(H)}$. Hence, by linearity, it suffices to establish the
  proposition for the case $H=Comm_{F_N}(H)$.
Thus assume that $H\le F_N$ is a nontrivial finitely generated
subgroup with $H=Comm_{F_N}(H)$. Recall that
\[
\eta_{H}=\sum_{H'\sim H} \delta_{\Lambda(H')}.
\]

It is easy to see that for any subgroup $G\le F_N$ we have
$\phi(\Lambda(G))=\Lambda(\phi(G))$. 

Let $U\subseteq \mathfrak C_N$. By Definition~\ref{defn:action},
$\phi\eta_H(U)=\eta_H(\phi^{-1}(U))$ is equal to the number of elements
of the form $\Lambda(H')$ (where $H'\sim H$) that belong to
$\phi^{-1}(U)$, which, in turn, is equal to the number of elements of
the form $\phi\Lambda(H')$ (where $H'\sim H$) that belong to $U$.
Hence
\begin{gather*}
\phi(\eta_H)=\sum_{H'\sim H} \delta_{\phi\Lambda(H')}=\sum_{H'\sim H} \delta_{\Lambda(\phi(H'))}=\\
=\sum_{K\sim \phi(H)} \delta_{\Lambda(K)}=\eta_{\phi(H)},
\end{gather*}
as required.
\end{proof}

Proposition~\ref{prop:r-act} implies that the action of $\Aut(F_N)$
has a global nonzero fixed point, namely the current $\eta_{F_N}=\delta_{\partial F_N}$. Indeed, for any
$\phi\in \Aut(F_N)$ we have 
\[
\phi\eta_{F_N}=\eta_{\phi(F_N)}=\eta_{F_N}.
\]
Moreover, all scalar multiples $r\eta_{F_N}$, where $r\ge 0$, are also
fixed by $\Aut(F_N)$. In particular, since applying an automorphism to
a subgroup of $F_N$ preserves the index of the subgroup, if
$[F_N:H]<\infty$ then $\phi\eta_H=\eta_H$ for every $\phi\in \Aut(F_N)$.

\subsection{Local formulas}\label{subsec:local}

Similarly to the case of $\Curr(F_N)$, we can get more explicit \lq\lq local formulas\rq\rq , with respect to a marking, for the action of $\Out(F_N)$ on $\gcn$.

\begin{prop}\label{prop:local}
Let $\alpha:F_N\isom\pi_1(\Gamma)$ be a marking on $F_N$ and let $X$ denote the universal cover $\widetilde \Gamma$. As before (see \ref{prop:weights}), let $\mathcal K_\Gamma$ denote the set of
all non-degenerate finite simplicial subtrees of $X$. For any $\phi\in \Out(F_N)$ and any $K\in\mathcal K_\Gamma$ there exist $m\ge 1$ and $K_1\dots, K_m\in \mathcal K_\Gamma$ with the following property:
For every $\mu\in\gcn$ we have
\[
(K; \phi(\mu))_\alpha=\sum_{i=1}^m  (K_i; \mu)_\alpha.
\]
\end{prop}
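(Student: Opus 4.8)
The plan is to realize the action of $\phi$ on subset currents, at the level of cylinder weights, as a finite sum of cylinder weights, by transporting everything through a suitable \emph{marking change}. First I would fix a representative $\Phi\in\Aut(F_N)$ of $\phi$ and recall that $\Phi$ induces the homeomorphism $\Phi:\partial F_N\to\partial F_N$ and hence a homeomorphism of $\mathfrak C_N$, with $(\phi\mu)(U)=\mu(\Phi^{-1}(U))$ by Definition~\ref{defn:action}. Since $K\in\mathcal K_\Gamma$ is a finite subtree of $X=\widetilde\Gamma$, the cylinder $\mathcal SCyl_\alpha(K)\subseteq\mathfrak C_N$ is compact and open (Proposition~\ref{prop:topSN}), and so is its preimage $\Phi^{-1}(\mathcal SCyl_\alpha(K))$. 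Thus $(K;\phi(\mu))_\alpha=\mu\big(\Phi^{-1}(\mathcal SCyl_\alpha(K))\big)$, and the whole problem reduces to showing that the compact-open set $\Phi^{-1}(\mathcal SCyl_\alpha(K))$ is a \emph{finite disjoint union} of cylinders $\mathcal SCyl_\alpha(K_i)$ with $K_i\in\mathcal K_\Gamma$; then finite additivity of $\mu$ gives the formula, with the $K_i$ depending only on $\phi$ and $K$, not on $\mu$.

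Second, to produce the $K_i$ I would use a \emph{second marking}. Let $\alpha':=\alpha\circ\Phi^{-1}:F_N\isom\pi_1(\Gamma')$ be another marking (one may take $\Gamma'=\Gamma$ with the marking precomposed by $\Phi^{-1}$, or pass to a graph with an explicit homotopy equivalence realizing $\Phi$; either works). The induced boundary identification differs from that of $\alpha$ precisely by $\Phi$, so under the identification $\partial F_N=\partial X=\partial X'$ one has $\Phi^{-1}(\mathcal SCyl_\alpha(K))=\mathcal SCyl_{\alpha'}(K')$ for the corresponding subtree $K'\subseteq X'=\widetilde{\Gamma'}$. Now invoke part (2) of Proposition~\ref{prop:topSN} applied to the marking $\alpha$: the cylinders $\mathcal SCyl_\alpha(L)$, $L\in\mathcal K_\Gamma$, form a basis of the topology on $\mathfrak C_N$. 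Since $\mathcal SCyl_{\alpha'}(K')$ is open and compact, it is covered by finitely many basic cylinders $\mathcal SCyl_\alpha(L)$; a standard refinement argument using Lemma~\ref{lem:disj} (subdividing a cylinder $\mathcal SCyl_\alpha(L)$ into the disjoint cylinders $\mathcal SCyl_\alpha(L\cup U)$, $U\in P_+(q(e))$, along a terminal edge $e$) lets one replace this finite cover by a finite \emph{partition} $\mathcal SCyl_{\alpha'}(K')=\sqcup_{i=1}^m \mathcal SCyl_\alpha(K_i)$ with each $K_i\in\mathcal K_\Gamma$ and $\mathcal SCyl_\alpha(K_i)\subseteq\mathcal SCyl_{\alpha'}(K')$. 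Finite additivity of $\mu$ then yields $(K;\phi(\mu))_\alpha=\mu(\mathcal SCyl_{\alpha'}(K'))=\sum_{i=1}^m\mu(\mathcal SCyl_\alpha(K_i))=\sum_{i=1}^m (K_i;\mu)_\alpha$, as claimed, and the $K_i$ manifestly do not depend on $\mu$.

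The routine bookkeeping — checking that $\alpha'$ is a legitimate marking, that the boundary identifications compose correctly so that $\Phi^{-1}$ carries $\alpha$-cylinders to $\alpha'$-cylinders, and that the refinement of a finite open cover into a finite cylinder partition goes through — I would carry out but not belabor. The one point deserving genuine care, and the step I expect to be the main obstacle, is the \emph{compactness/refinement} argument: one must make sure that an arbitrary compact-open subset of $\mathfrak C_N$ which is a single $\alpha'$-cylinder really can be cut into finitely many $\alpha$-cylinders that are individually contained in it. This is where one uses that $\mathfrak C_N$ is totally disconnected with a basis of compact-open cylinders together with Lemma~\ref{lem:disj}: cover $\mathcal SCyl_{\alpha'}(K')$ by finitely many $\alpha$-cylinders contained in it (possible since it is open and compact and the $\alpha$-cylinders form a basis), then use the disjointification of Lemma~\ref{lem:disj} to turn overlapping cylinders into a disjoint subfamily still covering the set. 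Once this is in hand, the rest of the argument is purely formal.
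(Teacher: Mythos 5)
Your proposal is correct and follows essentially the same route as the paper: reduce to $\Phi\in\Aut(F_N)$, observe that $\Phi^{-1}(\mathcal SCyl_\alpha(K))$ is compact and open, decompose it into finitely many pairwise disjoint cylinders $\mathcal SCyl_\alpha(K_i)$ using the basis property together with the subdivision of Lemma~\ref{lem:disj}, and conclude by finite additivity of $\mu$. The detour through the second marking $\alpha'=\alpha\circ\Phi^{-1}$ is harmless but unnecessary, since compactness and openness of the preimage already follow from $\Phi$ acting as a homeomorphism of $\mathfrak C_N$.
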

\begin{proof}
Since inner automorphisms are contained in the kernel of the action of $\Aut(F_N)$ on $\gcn$, it suffices to prove the statement of the proposition under the assumption that $\phi\in \Aut(F_N)$.

Let $\phi\in \Aut(F_N)$ be arbitrary.
Since $\mathcal SCyl_\alpha(K)\subseteq \mathfrak C_N$ is compact, the set $\phi^{-1}\mathcal SCyl_\alpha(K)$ is also compact. Since subset cylinders are not just compact, but also open,  $\phi^{-1}\mathcal SCyl_\alpha(K)$ is covered by finitely many subset cylinders. By Lemma~\ref{lem:disj}, after subdividing them, we may assume that $\phi^{-1}\mathcal SCyl_\alpha(K)$ is covered by finitely many pairwise disjoint subset cylinders:
\[
\phi^{-1}\mathcal SCyl_\alpha(K) =\sqcup_{i=1}^m \mathcal SCyl_\alpha(K_i).
\]
Put $M:=\max_{i=1}^m \#EK_i$.  By definition of $\phi\mu$ we have

\begin{gather*}
(K; \phi\mu)_\alpha =\phi\mu\left(\mathcal SCyl_\alpha(K)  \right)=\mu\left(\phi^{-1}\mathcal SCyl_\alpha(K)    \right)=\\
\mu\left(\sqcup_{i=1}^m \mathcal SCyl_\alpha(K_i)  \right)=\sum_{i=1}^m \mu(\mathcal SCyl_\alpha(K_i) )=\sum_{i=1}^m ( K_i; \mu)_\alpha, 
\end{gather*}
as required.

\end{proof}

\begin{rem}
As in the case of $\Curr(F_N)$~\cite{Ka2}, a more detailed argument for the proof of Proposition~\ref{prop:local} shows that, given $\phi\in \Out(F_N)$ and $K\in \mathcal K_\Gamma$, one can algorithmically find the trees $K_1,\dots, K_m$ satisfying the conclusion of Proposition~\ref{prop:local}. 
\end{rem}

\section{The co-volume form}\label{sec:intform}

\subsection{Constructing co-volume form}\label{subsec:constr}

For $T\in\cvn$ and a nontrivial finitely generated subgroup $H\le F_N$
put
\[
||H||_T:=vol(H\setminus T_H)
\]
where $T_H$ is the smallest $H$-invariant subtree of $T$. It is also the convex hull of the limit set of $H$, see Proposition \ref{prop:basic}.

Note that if $H=\langle g\rangle$, where $g\in F_N, g\ne 1$ then
$||H||_T=||g||_T$, the translation length of $T$.

\begin{lem}\label{lem:volH}
Let $T\in\cvn$, let $H\le F_N$ be a nontrivial finitely generated
subgroup and let $\phi\in \Aut(F_N)$. Then $||\phi(H)||_{\phi T}=||H||_T$.
\end{lem}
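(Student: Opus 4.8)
The plan is to unwind the definitions of the right $\Aut(F_N)$-action on $\cvn$ and of the minimal invariant subtree, and then exhibit an explicit $\phi(H)$-equivariant isometry between $(\phi T)_{\phi(H)}$ and $T_H$ that descends to an isometry of the quotient metric graphs. First I would recall that, by definition of the action of $\phi\in\Aut(F_N)$ on $\cvn$ given in Subsection~\ref{subsec:outer}, the tree $\phi T = T\phi^{-1}$ is equal to $T$ as a metric space, but with the twisted $F_N$-action $g\underset{\phi T}{\cdot} x = \phi^{-1}(g)\underset{T}{\cdot} x$. Hence the identity map $\mathrm{id}\colon \phi T\to T$ is an isometry of metric spaces which intertwines the two actions in the sense that $\mathrm{id}(g\underset{\phi T}{\cdot} x)=\phi^{-1}(g)\underset{T}{\cdot}\mathrm{id}(x)$.

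The key step is to identify the minimal $\phi(H)$-invariant subtree of $\phi T$. I claim $(\phi T)_{\phi(H)} = T_H$ as subsets of the common underlying metric space. Indeed, a subset $Y\subseteq \phi T$ is $\phi(H)$-invariant for the $\phi T$-action precisely when for every $h\in H$ one has $\phi(h)\underset{\phi T}{\cdot} Y = Y$, i.e. $\phi^{-1}(\phi(h))\underset{T}{\cdot} Y = h\underset{T}{\cdot} Y = Y$; so $\phi(H)$-invariant subsets of $\phi T$ are exactly $H$-invariant subsets of $T$. Since the minimal such subtree is unique (Proposition~\ref{prop:basic}(6), or the remark preceding Proposition~\ref{prop:XH}), we get $(\phi T)_{\phi(H)} = T_H$. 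Moreover the edge/segment lengths agree because the underlying metric spaces are literally equal.

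It then remains to check that the quotient graphs $\phi(H)\backslash (\phi T)_{\phi(H)}$ and $H\backslash T_H$ are isometric as metric graphs, so that their volumes coincide. This is immediate: the identity map on the common space $T_H=(\phi T)_{\phi(H)}$ carries $H$-orbits (in the $T$-action) bijectively onto $\phi(H)$-orbits (in the $\phi T$-action), by the same computation as above, namely $\phi(H)\underset{\phi T}{\cdot} x = H\underset{T}{\cdot} x$ for every $x$. Hence it descends to a length-preserving bijection between the two quotient graphs, giving $\vol\big(\phi(H)\backslash(\phi T)_{\phi(H)}\big) = \vol\big(H\backslash T_H\big)$, i.e. $\|\phi(H)\|_{\phi T} = \|H\|_T$.

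I do not anticipate a serious obstacle here; the statement is essentially a bookkeeping exercise about how the twisted action interacts with minimal subtrees and quotients. The only mild subtlety is being careful about the distinction between the right action $T\mapsto T\phi$ and the left action $T\mapsto \phi T = T\phi^{-1}$ used elsewhere in the paper, and making sure the indices on $\phi$ versus $\phi^{-1}$ are consistent throughout; once the convention is pinned down, the computation $g\underset{\phi T}{\cdot}x = \phi^{-1}(g)\underset{T}{\cdot}x$ does all the work.
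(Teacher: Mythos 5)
Your proof is correct and follows the same route as the paper: identify $\phi T$ with $T$ as a metric space via $g\underset{\phi T}{\cdot}x=\phi^{-1}(g)\underset{T}{\cdot}x$, observe that the $\phi(H)$-action on $\phi T$ coincides with the $H$-action on $T$ so that $(\phi T)_{\phi(H)}=T_H$ with identical quotient metric graphs. Your write-up just makes explicit the orbit computation that the paper leaves as "it is easy to see".
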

\begin{proof}
Recall, that as a metric space, $\phi T$ is equal to $T$, but the
action of $F_N$ on $\phi T$ is defined as
\[
g\underset{\phi T\phi}{\cdot}
x=\phi^{-1}(g)\underset{T}{\cdot} x
\]
where $x\in T$, $g\in F_N$.
It follows that the tree $T_H\subseteq T$ is $\phi(H)$-invariant with respect to
the $F_N$-action on $\phi T$, and, moreover it is the smallest
$\phi(H)$-invariant subtree of $\phi T$. Also, it is easy to see that,
as metric graphs, $H\setminus T_H$ is equal to $\phi(H)\setminus (\phi
T)_{\phi(H)}$. Hence $||\phi(H)||_{\phi T}=||H||_T$, as claimed.  
\end{proof}

\begin{propdfn}[Co-volume form]\label{defn:intform}
Let $N\ge 2$. Define a map
\[
\langle\, , \, \rangle: \cvn\times\gcn\to\mathbb R_{\ge 0}
\]
as follows.
Let $T\in\cvn$ and $\mu\in \gcn$. Let
$\alpha:F_N\to\pi_1(\Gamma)$ be a marking and let $\mathcal L$ be a
metric graph structure on $\Gamma$ such that $T=(\widetilde\Gamma,
d_\mathcal L)$ in $\cvn$.
Put
\[
\langle T, \mu\rangle:=\sum_{e\in E_{top}(\Gamma)} (e;
\mu)_\alpha \mathcal L(e) \tag{$\spadesuit$}
\]
where $(e;\mu)_\alpha$ is as in Notation~\ref{not:e}.
Then the following hold:

\begin{enumerate}
\item The map $\langle\, , \, \rangle: \cvn\times\gcn\to\mathbb R_{\ge 0}$ is continuous, $\mathbb R_{\ge 0}$-linear with respect to the second argument and $\mathbb R_{\ge 0}$-homogeneous with respect to the first argument.
\item The map $\langle\, , \, \rangle$ is $\Out(F_N)$-equivariant, that is, for any $T\in\cvn$, $\mu\in \gcn$ and $\phi\in \Out(F_N)$ we have
\[
\langle \phi T, \phi\mu\rangle=\langle T,\mu\rangle .
\]
In other words, in terms of using the right $\Out(F_N)$-action on $\cvn$, for all $T\in\cvn$ and $\phi\in \Out(F_N)$ we have
\[
\langle T\phi, \mu\rangle=\langle T,\phi\mu\rangle .
\]
\item For any finitely generated nontrivial subgroup $H\le F_N$ and any $T\in \cvn$ we have
\[
\langle T, \eta_H\rangle=||H||_T
\]
where $T_H$ is the minimal $H$-invariant subtree of $T$.
\end{enumerate}
 We call the map $\langle\, , \, \rangle: \cvn\times\gcn\to\mathbb R_{\ge 0}$  \emph{the co-volume form}. 
\end{propdfn}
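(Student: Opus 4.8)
The plan is to verify the three asserted properties directly from the defining formula $(\spadesuit)$, reducing everything to the already-established behaviour of the weights $(e;\mu)_\alpha$.

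First I would address well-definedness, which is implicit in the statement: the formula $(\spadesuit)$ refers to a choice of marking $\alpha$ and metric structure $\mathcal L$ with $T=(\widetilde\Gamma,d_\mathcal L)$, and one must check that $\langle T,\mu\rangle$ does not depend on this choice. Two such presentations $(\alpha_1,\mathcal L_1)$ and $(\alpha_2,\mathcal L_2)$ are equivalent marked metric graph structures (as recalled in Subsection~\ref{subsec:outer}), so there is a graph isometry $j\colon(\Gamma_1,\mathcal L_1)\to(\Gamma_2,\mathcal L_2)$ respecting the markings; lifting $j$ to an $F_N$-equivariant isometry $X_1\to X_2$ carries a lift $\widetilde e$ of an edge $e\in E_{top}\Gamma_1$ to a lift of $j(e)$, and since subset cylinders are preserved by this equivariant isometry we get $(e;\mu)_{\alpha_1}=(j(e);\mu)_{\alpha_2}$ and $\mathcal L_1(e)=\mathcal L_2(j(e))$. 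Summing over edges gives equality of the two sums. I expect this to be routine book-keeping.

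For part~(1): $\mathbb R_{\ge 0}$-linearity in $\mu$ and $\mathbb R_{\ge 0}$-homogeneity in $T$ are immediate from $(\spadesuit)$, since $\mu\mapsto(e;\mu)_\alpha$ is $\mathbb R_{\ge 0}$-linear (Proposition-Definition~\ref{propdfn:weights}) and scaling $T$ by $c>0$ scales each $\mathcal L(e)$ by $c$. For continuity, I would fix a marking $\alpha$ and observe that the map $\cvn\to\mathbb R_{\ge 0}^{E_{top}\Gamma}$, $T\mapsto(\mathcal L(e))_e$, is continuous on the stratum of $\cvn$ with underlying graph $\Gamma$, while $\mu\mapsto(e;\mu)_\alpha$ is continuous on $\gcn$ by the Corollary following Proposition-Definition~\ref{propdfn:weights} (or Proposition~\ref{top:GC}); since a point of $\cvn$ and nearby points share the same underlying graph (up to the finitely many $\Gamma$ and the simplicial structure is locally constant), $\langle\cdot,\cdot\rangle$ is a finite sum of products of continuous functions, hence continuous. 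Some care is needed at boundaries between strata of $\cvn$ where edge lengths degenerate to $0$, but those edges simply contribute $0$ in the limit, so joint continuity on all of $\cvn$ follows.

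For part~(2): given $\phi\in\Out(F_N)$ and a presentation $T=(\widetilde\Gamma,d_\mathcal L)_\alpha$, the tree $T\phi^{-1}$ (i.e. $\phi T$) is presented by the same metric graph $(\Gamma,\mathcal L)$ but with marking $\alpha\circ\phi^{-1}$ (lifting to $\Out(F_N)$ a representative $\Phi$); the edge weights transform by $(e;\phi\mu)_{\alpha}=(e;\mu)_{\alpha\circ\Phi}$ by unwinding Definition~\ref{defn:action}, since $\Phi$ carries $\mathcal SCyl_{\alpha}(\widetilde e)$ to $\mathcal SCyl_{\alpha\circ\Phi^{-1}}(\Phi\widetilde e)$ up to the identification of boundaries. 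Substituting into $(\spadesuit)$ and matching terms gives $\langle\phi T,\phi\mu\rangle=\langle T,\mu\rangle$, equivalently $\langle T\phi,\mu\rangle=\langle T,\phi\mu\rangle$. Alternatively, and perhaps more cleanly, once part~(3) is known and $\Out(F_N)$-equivariance is checked on the dense set of rational currents via Proposition~\ref{prop:r-act} and Lemma~\ref{lem:volH}, continuity from part~(1) and density from Theorem~\ref{thm:dense} extend it to all of $\gcn$.

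For part~(3), the heart of the matter: fix $H$ with, say, $H=Comm_{F_N}(H)$ (the general case reduces to this by the index-$m$ relations $\eta_H=m\eta_{H_0}$ and $\mathrm{vol}(H\setminus T_H)=m\,\mathrm{vol}(H_0\setminus T_{H_0})$, an $m$-fold covering of metric graphs). Present $T$ as $(\widetilde\Gamma,d_\mathcal L)_\alpha$ and let $\Delta=H\setminus X_H$ be the $\Gamma$-core graph representing $[H]$, where $X_H=\mathrm{Conv}_{\widetilde\Gamma}(\Lambda(H))$; by Theorem~\ref{thm:cc} we have $\eta_H=\mu_\Delta$, so $(e;\eta_H)_\alpha=(\widetilde e;\mu_\Delta)_\alpha=(\widetilde e;\Delta)_\Gamma$ equals the number of occurrences of a single lifted edge $\widetilde e$ in $\Delta$, which for a single edge is simply the number of edges of $\Delta$ with label $e\in E_{top}\Gamma$. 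Therefore $\langle T,\eta_H\rangle=\sum_{e\in E_{top}\Gamma}\#\{\text{edges of }\Delta\text{ labelled }e\}\cdot\mathcal L(e)$, which is exactly the sum of the $d_\mathcal L$-lengths of all edges of $\Delta=H\setminus X_H$ with the metric pulled back from $\Gamma$, i.e. $\mathrm{vol}(H\setminus T_H)=||H||_T$ — here one uses that $T_H\subseteq T$ is the metric realisation of $X_H$ with the $d_\mathcal L$-metric and that $H\setminus T_H$ is obtained from $\Delta$ by assigning each edge its $\mathcal L$-length.

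The main obstacle I anticipate is not any single deep step but the bookkeeping in part~(3): one must be careful that the combinatorial identification $\eta_H=\mu_\Delta$ of Theorem~\ref{thm:cc} (stated for simplicial weights) is correctly combined with the metric structure, i.e. that summing occurrence-counts of individual edges against their $\mathcal L$-lengths genuinely computes $\mathrm{vol}(H\setminus T_H)$ and not some multiple of it, and that the reduction from general $H$ to the case $H=Comm_{F_N}(H)$ is consistent on both sides of the equation. Once that is pinned down, parts~(1) and~(2) are comparatively soft, and indeed part~(2) could even be deduced a posteriori from part~(3) plus density of rational currents (Theorem~\ref{thm:dense}) and continuity.
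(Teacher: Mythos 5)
Your proposal is correct and follows essentially the same route as the paper: parts (1) is read off from $(\spadesuit)$ with continuity deferred to the weight functions (the paper cites the argument of \cite{Ka2}), part (3) is obtained from Theorem~\ref{thm:cc} by identifying $\eta_H$ with $\mu_\Delta$ for $\Delta=H\setminus T_H$ and summing edge-occurrence counts against $\mathcal L$-lengths, and part (2) is deduced exactly via your ``alternative'' route, i.e.\ from part (3), Proposition~\ref{prop:r-act}, Lemma~\ref{lem:volH}, and density of rational currents (Theorem~\ref{thm:dense}). Your extra remarks on well-definedness and on the stratification of $\cvn$ are reasonable supplements but not needed beyond what the paper already records.
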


\begin{proof}
The continuity of $\langle\, , \, \rangle$ is a straightforward
consequence of its definition, and the argument is essentially
identical to that used in \cite{Ka2} to show continuity of the
intersection form $\cvn\times Curr(F_N)\to \mathbb R_{\ge 0}$. We
refer the reader to the proof of Proposition~5.9 in \cite{Ka2} for details.

The definition also directly implies that
\[
\langle T, c_1\mu_1+c_2\mu_2\rangle = c_1\langle T, \mu_1\rangle +c_2\langle T, \mu_2\rangle
\]
for $c_1,c_2\ge 0$, $T\in \cvn$ and $\mu_1,\mu_2\in \gcn$
and that
\[
\langle cT, \mu\rangle= c\langle T,\mu\rangle
\]
for $c\ge 0$, $T\in \cvn$, $\mu\in \gcn$. Thus (1) holds.

We now check that (3) holds. 
Let $H\le F_N$ be a nontrivial finitely generated subgroup.
Recall that,  $\Delta=H\setminus T_H$ is exactly the $\Gamma$-core graph representing the conjugacy class $[H]$ (see Convention \ref{conv:subgroupcore}. By
Theorem~\ref{thm:cc} we have $\eta_H=\mu_\Delta$.
The edges of $\Delta$ in $\Delta=H\setminus T_H$ have the same lengths as the $\mathcal L$-lengths of the corresponding edges of $\Gamma$. Thus the $\mathcal L$-volume of $\Delta$ is the sum of the $\mathcal L$-lengths of all the edges of $\Delta$, so that
\begin{gather*}
||H||_T=vol(H\setminus T_H)= vol_\mathcal L(\Delta)=\sum_{e\in E_{top}(\Gamma)} (\widetilde e; \Delta) \mathcal L(e)=\\ \sum_{e\in E_{top}(\Gamma)} (e; \mu_\Delta) \mathcal L(e)= \sum_{e\in E_{top}(\Gamma)} ( e; \eta_H) \mathcal L(e)=\langle T, \eta_H\rangle,
\end{gather*}
and (3) is verified.

We can now show that (2) holds. Since inner automorphisms of $F_N$ act
trivially on both $\cvn$ and $\gcn$, it suffices to check (2) for
elements of $\Aut(F_N)$ (rather than of $\Out(F_N)$).
Let $\phi\in \Aut(F_N)$.

We need to show that for any
 $T\in \cvn$ and $\mu\in \gcn$. 
\[
\langle \phi T, \phi\mu\rangle=\langle T,\mu\rangle.
\]
By continuity of the intersection form, already established in (1) and
by Theorem~\ref{thm:dense}, it suffices to verify the above formula
for the case where $T\in \cvn$ is arbitrary and where $\mu$ is a
rational current. Thus let $\mu=c\eta_H$ where $c\ge 0$ and $H\le F_N$
is a nontrivial finitely generated subgroup.
We have
\begin{gather*}
\langle \phi T, \phi c\eta_H\rangle=c\langle \phi T, \phi\eta_H\rangle=c\langle \phi T, \eta_{\phi(H)}\rangle=\\
c||\phi(H)||_{\phi T} =c||H||_T=c\langle T, \eta_H\rangle=\langle T, c\eta_H\rangle,
\end{gather*}
as required.
\end{proof}

\subsection{Non-existence of a continuous extension of the co-volume form to $\cvnbar$}\label{subsec:exist}

It turns out that, unlike for ordinary currents, there does not exist a continuous extension of the co-volume form to $\cvnbar\times\gcn$.
Before proving this statement, we need to recall a few background facts regarding the dynamics of the action of iwip elements and of Dehn twists elements of $\Out(F_N)$ on $\cvnbar$ and $\CVNbar$. 
Recall that an element $\phi\in \Out(F_N)$ is called an \emph{iwip} (which stands for \lq\lq irreducible with irreducible powers\rq\rq ) or \emph{fully irreducible} if there do not exist $m\ne 0$ and a proper free factor $L$ of $F_N$ such that $\phi^m([L])=[L]$, where $[L]$ is the conjugacy class of $L$. 

The following \lq\lq North-South\rq\rq\ dynamics result for iwips is well-known and was obtained by Levitt and Lustig in \cite{LL}:

\begin{prop}\cite{LL}\label{prop:LL}
Let $N\ge 2$ and let $\phi\in \Out(F_N)$ be an iwip.
Then there exist unique $[T_+]=[T_+(\phi)], [T_-]=[T_-(\phi)]\in \CVNbar$ and $\lambda_+=\lambda_+(\phi)>1$, $\lambda_-=\lambda_-(\phi)>1$ with the following properties:
\begin{enumerate}
\item We have $T_+\phi=\lambda_+ T_+$, $T_-\phi=\frac{1}{\lambda_-} T_-$, so that $[T_-]\phi=[T_-]$ and $[T_+]\phi=[T_+]$.
\item For any $[T]\in \CVNbar$ such that $[T]\ne [T_-]$ we have $\lim_{n\to\infty} [T]\phi^n = [T_+]$ and for any $[T]\in \CVNbar$ such that $[T]\ne [T_+]$ we have $\lim_{n\to\infty} [T]\phi^{-n} = [T_-]$.
\item For any $T\in \cvn$ we have $\lim_{n\to\infty} [T]\phi^n =[T_+]$. Moreover, given any $T\in \cvn$, we can choose a representative $T_+\in\cvnbar$ of $[T_+]$ such that
\[
\lim_{n\to\infty} \frac{1}{\lambda_+^n} T\phi^n = T_+  \quad \text{ in } \quad \cvnbar.
\]
\item The action of $F_N$ on $T_+$ has dense $F_N$-orbits.  Moreover, if $N\ge 3$ and $\phi\in \Out(F_N)$ is an atoroidal iwip then the action of $F_N$ on $T_+$ is also free.
\item If $\theta\in \Out(F_N)$ is arbitrary then for $\psi= \theta^{-1} \phi\theta$ we have $\lambda_\pm(\psi)=\lambda_{\pm}(\phi)$ and $[T_\pm (\psi)]=[T_\pm(\phi)]\theta$.  Moreover, if $T\in\cvn$ and $T':=\lim_{n\to\infty} \frac{1}{\lambda_+^n} T\phi^n$ and $T'':=\lim_{n\to\infty} \frac{1}{\lambda_+^n} T\psi^n$ then $T''=T'\theta$. 

 \end{enumerate}
The trees $[T_+(\phi)]$ and $[T_-(\phi)]$ are called the \emph{attracting} and \emph{repelling} trees of $\phi$ respectively.
\end{prop}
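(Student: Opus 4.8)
The plan is to prove Proposition~\ref{prop:LL} by following the argument of Levitt and Lustig \cite{LL}, whose engine is the Bestvina--Handel theory of train track representatives. First I would fix an expanding irreducible train track representative $f\colon\Gamma\to\Gamma$ of $\phi$ on a marked graph $\Gamma$ with $\pi_1(\Gamma)\cong F_N$, and set $\lambda_+:=\lambda(\phi)$ to be the Perron--Frobenius eigenvalue of its transition matrix; here the iwip hypothesis is used, since $\phi$ is neither reducible nor periodic, so $\lambda_+>1$. The crucial feature to record is that $f$ sends legal edge-paths to legal edge-paths with no cancellation, so that under $f^n$ the combinatorial length of a legal path grows comparably to $\lambda_+^n$ up to a bounded cancellation error, while illegal turns get ``absorbed'' after boundedly many applications of $f$. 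Applying the same discussion to $\phi^{-1}$ produces $\lambda_-:=\lambda(\phi^{-1})>1$.

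Next I would build the attracting tree. Starting from an arbitrary $T\in\cvn$, I would show that the rescaled length functions $\lambda_+^{-n}\,\|\cdot\|_{T\phi^n}$ form a Cauchy sequence for pointwise convergence on $F_N$; this is where the bounded cancellation estimate for $f$ and the Perron--Frobenius behaviour on legal leaves enter. The limit is the length function of a tree $T_+\in\cvnbar$, and a routine argument shows that the projective class $[T_+]$ is independent of $T$ and satisfies $T_+\phi=\lambda_+T_+$, which gives (1), the first assertion of (3), and, after running the construction for $\phi^{-1}$, the existence of $T_-$ with $T_-\phi=\tfrac1{\lambda_-}T_-$. The equivariance statement (5) for $\psi=\theta^{-1}\phi\theta$ is then formal from the identity $T\psi^n=\big((T\theta^{-1})\phi^n\big)\theta$ together with continuity of the right $\Out(F_N)$-action on $\cvnbar$. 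For (4): if the $F_N$-action on $T_+$ failed to have dense orbits, then $T_+$ would carry a nontrivial simplicial (graph-of-groups) part preserved projectively by $\phi$, and a standard analysis would extract a $\phi$-periodic proper free factor, contradicting the iwip property; freeness when $\phi$ is atoroidal and $N\ge3$ follows because a nontrivial point stabilizer would force a $\phi$-periodic conjugacy class or, again, a periodic proper free factor.

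The main obstacle is the North--South convergence (2): for every $[T]\in\CVNbar$ with $[T]\ne[T_-]$ one must prove $[T]\phi^n\to[T_+]$, and dually $[T]\phi^{-n}\to[T_-]$ for $[T]\ne[T_+]$. For $T\in\cvn$ this is immediate from (3); the real content is the behaviour on the boundary $\cvnbar$. Since $\CVNbar$ is compact, it suffices to show that every accumulation point of $\{[T]\phi^n\}$ equals $[T_+]$. The clean way to do this is via the length pairing: realize the attracting lamination $\Lambda_+(\phi)$ as a current $\mu_+\in\Curr(F_N)$ and use the continuous intersection form of Proposition~\ref{prop:int}; one shows that $[T_-]$ is the unique point of $\CVNbar$ with $\langle T_-,\mu_+\rangle=0$, and that whenever $\langle T,\mu_+\rangle\ne0$ the sequence $\lambda_+^{-n}T\phi^n$ converges to a \emph{nonzero} multiple of $T_+$, because feeding $\mu_+$ through $f$ selects exactly the Perron--Frobenius eigendirection. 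The delicate residual case is a point $[T]\ne[T_-]$ with $\langle T,\mu_+\rangle=0$: here one passes to the legal/illegal decomposition of elements of $F_N$ as measured by $T$ relative to $f$, uses that illegal turns are consumed after boundedly many applications of $f$, and concludes that some iterate pairs nontrivially with $\mu_+$ unless $[T]=[T_-]$. Making all of these estimates uniform, so that the convergence is genuinely in the topology of $\CVNbar$ and not merely pointwise on length functions, is the technical crux of \cite{LL}; since the statement is quoted verbatim from that paper, in the present text I would simply invoke \cite{LL} rather than reproduce the proof.
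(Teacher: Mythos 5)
The paper offers no proof of this proposition: it is imported verbatim from Levitt--Lustig \cite{LL} as background (with part (5) treated as a routine addendum), and your outline both faithfully tracks the actual train-track argument of \cite{LL} and ends by deferring to that reference, which is exactly what the paper does. The one place your sketch is too quick is the \emph{moreover} clause of (5): the identity $T\psi^n=\bigl((T\theta^{-1})\phi^n\bigr)\theta$ only yields $T''=\bigl(\lim_{n}\lambda_+^{-n}(T\theta^{-1})\phi^n\bigr)\theta$, i.e.\ the limit seeded at $T\theta^{-1}$ rather than at $T$, and since the non-projectivized limit genuinely depends on the seed tree (it scales linearly in it), identifying this with $T'\theta$ is not purely formal and requires the finer information from \cite{LL} about how the limiting representative of $[T_+]$ depends on the initial tree.
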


The attracting tree $T_+(\phi)$ of an iwip $\phi$ can be understood fairly explicitly in terms of a train-track representative of $\phi$ and in fact $\lambda_+(\phi)$ is the Perron-Frobenius eigenvalue of any train-track representative of $\phi$.

Guirardel proved in \cite{Gui1} that for $N\ge 3$ the action of $\Out(F_N)$ on $\partial \CVN=\CVNbar - \CVN$ is not topologically minimal and that there exists a unique proper closed $\Out(F_N)$-invariant subset of $\partial \CVN$. We only need the following limited version of his result:

\begin{prop}\label{prop:Gui}\cite{Gui1}
Let $N\ge 3$. Then there exists a unique minimal nonempty closed $\Out(F_N)$-invariant subset $\mathcal M_N^{cv}\subseteq \partial \CVN$ (so that for every $[T]\in \mathcal M_N^{cv}$ the $\Out(F_N)$-orbit of $[T]$ is dense in $\mathcal M_N^{cv}$). Moreover, if $T_\ast$ is the Bass-Serre tree (with all edges given length $1$) of any nontrivial free product decomposition $F_N=B\ast C$, then $[T_\ast]\in \mathcal M_N^{cv}$. 
\end{prop}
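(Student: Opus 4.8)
The statement has two parts: the existence and uniqueness of a minimal nonempty closed $\Out(\FN)$-invariant subset $\mathcal M_N^{cv}\subseteq\partial\CVN$ --- the clause about density of orbits being merely a restatement of minimality --- and the fact that the projective class of the Bass--Serre tree $T_\ast$ of an arbitrary one-edge free splitting $\FN=B\ast C$ lies in $\mathcal M_N^{cv}$. The plan is to deduce the first part from Zorn's lemma together with the North--South dynamics of iwips recorded in Proposition~\ref{prop:LL}, and then to reduce the second part to an approximation statement about attracting trees of iwips, which is where the genuine content lies.

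For the first part, note that $\partial\CVN=\CVNbar\setminus\CVN$ is nonempty (it contains the attracting tree of any iwip $\phi$, on which the $\FN$-action has dense orbits by Proposition~\ref{prop:LL}(4) and is therefore non-simplicial), closed in the compact space $\CVNbar$, hence compact, and $\Out(\FN)$-invariant. Applying Zorn's lemma to the family of nonempty closed $\Out(\FN)$-invariant subsets of $\partial\CVN$ ordered by reverse inclusion --- a nested intersection of such sets is again one of them, by compactness --- produces at least one minimal such set. To see there is only one, fix an iwip $\phi\in\Out(\FN)$ and let $[T_\pm]=[T_\pm(\phi)]\in\partial\CVN$ be its attracting and repelling trees. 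I would then show that \emph{every} nonempty closed $\Out(\FN)$-invariant $\mathcal M\subseteq\partial\CVN$ contains $[T_+]$: first, $\mathcal M\ne\{[T_-]\}$, since an $\Out(\FN)$-fixed point of $\partial\CVN$, being fixed by every iwip $\psi$, would have to be one of the two trees $[T_\pm(\psi)]$ for every such $\psi$, which is impossible because there exist iwips with pairwise disjoint fixed-point pairs in $\CVNbar$ (a standard fact about the dynamics of $\Out(\FN)$); hence $\mathcal M$ contains some $[T]\ne[T_-]$, and by Proposition~\ref{prop:LL}(2) we have $[T]\phi^n\to[T_+]$, so $[T_+]\in\mathcal M$. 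It follows that any minimal $\mathcal M$ equals $\overline{\Out(\FN)\cdot[T_+]}$, which proves uniqueness, and that $\overline{\Out(\FN)\cdot[T_+]}$ is itself minimal (any minimal subset of it contains $[T_+]$, hence is all of it). We set $\mathcal M_N^{cv}:=\overline{\Out(\FN)\cdot[T_+]}$, and the density of orbits is precisely minimality.

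For the second part, first observe that the argument just given applies to every iwip $\psi$, so $\mathcal M_N^{cv}$ contains $[T_+(\psi)]$ for all iwips $\psi$; conversely the set $\overline{\{\,[T_+(\psi)]:\psi\text{ an iwip of }\FN\,\}}$ is nonempty, closed and $\Out(\FN)$-invariant (because $[T_+(\psi)]\theta=[T_+(\theta^{-1}\psi\theta)]$ by Proposition~\ref{prop:LL}(5) and conjugates of iwips are iwips), so it contains the minimal set $\mathcal M_N^{cv}$. Hence $\mathcal M_N^{cv}=\overline{\{\,[T_+(\psi)]:\psi\text{ an iwip}\,\}}$, and it suffices to show that the given $T_\ast$ is a limit in $\CVNbar$ of attracting trees $[T_+(\phi_n)]$ of iwips $\phi_n$ of $\FN$. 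This approximation is the main obstacle. One route is to build the $\phi_n$ from relative train-track maps adapted to the free-factor system $\{[B],[C]\}$ --- stretching the $B$- and $C$-directions at comparable rates while iterating a fixed mixing automorphism transverse to the splitting --- and to show, using Bestvina--Handel theory and the structure theory of very small $\FN$-trees (Gaboriau--Levitt--Paulin, Bestvina--Feighn, the Rips machine), that in the limit the metrics on the axes through $B$- and $C$-cosets collapse onto the single edge of $T_\ast$. An alternative, closer to Guirardel's approach in \cite{Gui1}, is to prove directly that every $[T]\in\partial\CVN$ can be degenerated, under the $\Out(\FN)$-action, toward a free-splitting tree, and then apply this inside $\mathcal M_N^{cv}$. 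Either route requires input well beyond Proposition~\ref{prop:LL}, and carrying out this limiting analysis --- uniformly over all decompositions $\FN=B\ast C$ --- is where essentially all the work in this proposition resides.
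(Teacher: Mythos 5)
A preliminary point of comparison: the paper does not prove this proposition at all --- it is quoted from Guirardel \cite{Gui1} as external input (see the sentence immediately preceding it), so there is no internal argument to measure your proposal against; your proof has to stand on its own.

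Your first part is essentially correct. The Zorn's lemma argument for existence, the observation that a global $\Out(F_N)$-fixed point in $\partial\CVN$ would have to lie in the two-point fixed set $\{[T_+(\psi)],[T_-(\psi)]\}$ of every iwip $\psi$, and the conclusion that every nonempty closed invariant subset contains $[T_+(\phi)]$, so that the unique minimal set is $\overline{\Out(F_N)\cdot[T_+(\phi)]}$ --- all of this works, granting the true but not entirely free fact that some $\theta\in\Out(F_N)$ moves the pair $\{[T_\pm(\phi)]\}$ off itself (this uses that the stabilizer of that pair is a proper subgroup, which you assert as standard rather than derive from Proposition~\ref{prop:LL}). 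The identification $\mathcal M_N^{cv}=\overline{\{[T_+(\psi)]:\psi\ \text{an iwip}\}}$ is likewise correct.

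The genuine gap is the second assertion. You reduce ``$[T_\ast]\in\mathcal M_N^{cv}$'' to the claim that the Bass--Serre tree of $F_N=B\ast C$ is a limit in $\CVNbar$ of projective classes of attracting trees of iwips, and then you do not prove that claim: you sketch two possible strategies (a relative train track / Rips machine degeneration, or Guirardel's approximation of boundary trees by splittings) and explicitly concede that carrying either out is where essentially all the work resides. That self-assessment is accurate, but it means the proposal contains no proof of the part of the statement the paper actually needs. Note that the easy direction --- $[T_+(\phi)]\in\overline{\Out(F_N)[T_\ast]}$, obtained by iterating a suitable iwip on $[T_\ast]$ --- is of no help here; what is required is the reverse containment $[T_\ast]\in\overline{\Out(F_N)[T_+(\phi)]}$, and nothing in Proposition~\ref{prop:LL} produces sequences converging \emph{to} a simplicial tree with nontrivial vertex stabilizers. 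This is precisely the content of Guirardel's theorem, which the paper imports rather than reproves.
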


Proposition~\ref{prop:LL} easily implies that for any iwip $\phi\in \Out(F_N)$ we have $[T_+(\phi)]\in \mathcal M_N^{cv}$.

\begin{prop}\label{prop:disc}
Let $N\ge 3$. Let $F_N=B\ast C$ be a nontrivial free product decomposition and let $T_\ast\in\cvnbar$ be the corresponding Bass-Serre tree. Then there exist sequences $T_n,T_n'\in \cvn$ such that $\lim_{n\to\infty} T_n=\lim_{n\to\infty} T_n'=T_\ast$ in $\cvnbar$ and that
\[
\lim_{n\to\infty} \langle T_n', \eta_{F_N}\rangle =1
\]
but
\[
\lim_{n\to\infty} \langle T_n, \eta_{F_N}\rangle =0.
\]
\end{prop}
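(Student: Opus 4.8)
The plan is to build the two sequences by hand. The key normalisation is that for $T\in\cvn$ the action of $F_N$ is minimal, so $T_{F_N}=T$ and Proposition-Definition~\ref{defn:intform}(3) gives $\langle T,\eta_{F_N}\rangle=\mathrm{vol}(F_N\backslash T)$, the total edge-length of the quotient graph; also $\mathrm{vol}(F_N\backslash T_\ast)=1$ since $F_N\backslash T_\ast$ is a single edge of length $1$.

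For $T_n'$ I would realize $F_N=B\ast C$ by a finite connected graph $\Gamma$ with a marking $\alpha\colon F_N\isom\pi_1(\Gamma)$ and a separating edge $e_0$ whose two complementary subgraphs carry $B$ and $C$; then let $\mathcal L_n$ assign length $1$ to $e_0$ and length $1/n$ to every other edge, and set $T_n':=(\widetilde\Gamma,d_{\mathcal L_n})\in\cvn$. For $g$ elliptic in $T_\ast$ the shortest loop of $\Gamma$ in the class $[g]$ avoids $e_0$, so $\|g\|_{T_n'}=O(1/n)\to 0=\|g\|_{T_\ast}$; for $g$ hyperbolic in $T_\ast$, for $n$ large the shortest such loop crosses $e_0$ exactly $\|g\|_{T_\ast}$ times (an extra crossing costs at least $1$, which the short edges cannot save), so $\|g\|_{T_n'}\to\|g\|_{T_\ast}$. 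Hence $T_n'\to T_\ast$ in $\cvnbar$, while $\langle T_n',\eta_{F_N}\rangle=\mathrm{vol}_{\mathcal L_n}(\Gamma)=1+(\#E_{top}\Gamma-1)/n\to 1$, as wanted.

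For $T_n$ I would use the ``twisting'' mechanism of iwips. Fix an iwip $\phi\in\Out(F_N)$ (these exist in $\Out(F_N)$), let $\lambda=\lambda_+(\phi)>1$, fix $T\in\cvn$, and let $T_+\in\cvnbar$ be the representative of $[T_+(\phi)]$ with $\lambda^{-n}T\phi^n\to T_+$ (Proposition~\ref{prop:LL}(3)). Using Proposition-Definition~\ref{defn:intform}(1)--(2), Proposition~\ref{prop:r-act} and $\phi^n(F_N)=F_N$,
\[
\langle \lambda^{-n}T\phi^n,\eta_{F_N}\rangle=\lambda^{-n}\langle T,\phi^n\eta_{F_N}\rangle=\lambda^{-n}\langle T,\eta_{\phi^n(F_N)}\rangle=\lambda^{-n}\langle T,\eta_{F_N}\rangle\longrightarrow 0,
\]
so $\lambda^{-n}T\phi^n$ already converges in $\cvnbar$ to a tree whose co-volume degenerates to $0$ — the only defect is that its limit is $[T_+]$ rather than $[T_\ast]$. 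To repair this I would invoke Guirardel's Proposition~\ref{prop:Gui}: both $[T_+(\phi)]$ and $[T_\ast]$ lie in the minimal set $\mathcal M_N^{cv}$, which is $\Out(F_N)$-minimal, so there are $\theta_j\in\Out(F_N)$ with $[T_+]\theta_j\to[T_\ast]$ in $\CVNbar$; by the standard relation between $\cvnbar$ and its projectivization I may pick scalars $c_j>0$ with $c_j(T_+\theta_j)\to T_\ast$ in $\cvnbar$. Setting $\phi_j:=\theta_j^{-1}\phi\theta_j$ (an iwip with $\lambda_+(\phi_j)=\lambda$ by Proposition~\ref{prop:LL}(5)), the same proposition gives $\lambda^{-n}T\phi_j^n\to T_+\theta_j$ in $\cvnbar$, while the computation above gives $\langle c_j\lambda^{-n}T\phi_j^n,\eta_{F_N}\rangle=c_j\lambda^{-n}\langle T,\eta_{F_N}\rangle\to 0$ as $n\to\infty$ for each fixed $j$. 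Finally I would diagonalize: $\cvnbar$ is metrizable in the length-function topology, so for each $j$ I can choose $n_j$ with $S_j:=c_j\lambda^{-n_j}T\phi_j^{n_j}\in\cvn$ satisfying $d(S_j,c_j(T_+\theta_j))<1/j$ and $\langle S_j,\eta_{F_N}\rangle<1/j$; then $S_j\to T_\ast$ and $\langle S_j,\eta_{F_N}\rangle\to 0$, and relabelling $(S_j)$ as $(T_n)$ completes the construction.

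The step I expect to be the main obstacle is the detour through $\mathcal M_N^{cv}$: one must be sure that $[T_+(\phi)]$ and $[T_\ast]$ belong to the \emph{same} minimal set, so that conjugating $\phi$ drags its attracting tree arbitrarily close to $[T_\ast]$, and one must carefully upgrade the resulting convergence in $\CVNbar$ to honest convergence in $\cvnbar$ after rescaling — only then can the ``$\lambda^{-n}$-rescaling annihilates the co-volume'' phenomenon be transported to a sequence that genuinely converges to $T_\ast$. The remaining ingredients (the explicit $T_n'$, the identity $\langle T\phi^n,\eta_{F_N}\rangle=\langle T,\eta_{F_N}\rangle$, and the diagonal extraction) are routine.
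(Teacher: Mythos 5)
Your proposal is correct and follows essentially the same route as the paper: the same iwip-plus-Guirardel-minimal-set-plus-diagonalization argument for $T_n$ (the paper also records, right after Proposition~\ref{prop:Gui}, that $[T_+(\phi)]\in\mathcal M_N^{cv}$, which settles the step you flagged as the main obstacle), and for $T_n'$ the paper uses the same idea via an explicit ``barbell'' graph with one long separating edge and short loops. The only cosmetic difference is that the paper normalizes the barbell so that $\vol(F_N\setminus T_n')=1$ exactly, whereas you only get the limit equal to $1$, which suffices.
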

\begin{proof}
Let $\phi\in \Out(F_N)$ be any iwip. Choose an arbitrary point $T\in \cvn$. 
Let $[T_+]=[T_+(\phi)]$ be the attracting tree of $\phi$. We may assume that $T_+(\phi)=\lim_{k\to\infty}\frac{1}{\lambda_+^k} T\phi^k$. 
Since both $[T_\ast]$ and $[T_+]$ belong to $\mathcal M_N^{cv}$, there exists a sequence $\theta_n\in \Out(F_N)$ such that $\lim_{n\to\infty} [T_+]\theta_n= [T_\ast]$. Thus for some sequence $c_n>0$ we have $\lim_{n\to\infty} c_n T_+\theta_n= T_\ast$ in $\cvnbar$. By Proposition~\ref{prop:LL} we know that $[T_+]\theta_n=[T_+(\psi_n)]$ where $\psi_n=\theta_n^{-1} \phi \theta_n$.  Moreover, for each $n\ge 1$ we have
\[
T_+\theta_n=\lim_{k\to\infty} \frac{1}{\lambda_+^k} T\psi_n^k
\]
Put $T_+(\psi_n):=T_+\theta_n$, and we then have $T_+\theta_n=T_+(\psi_n)= \lim_{k\to\infty} \frac{1}{\lambda_+^k} T \psi_n^k$ in $\cvnbar$.
For every $k\ge 1$ and $n\ge 1$ we have 
\[
\langle \frac{1}{\lambda_+^k} T \psi_n^k, \eta_{F_N}\rangle=\frac{1}{\lambda_+^k}\langle T, \psi_n^k\eta_{F_N}\rangle=\frac{1}{\lambda_+^k}\langle T, \eta_{F_N}\rangle=\frac{\vol(F_N\setminus T)}{\lambda_+^k}.
\]
For each $n\ge 1$ choose $k_n\ge 1$ such that for all $k\ge k_n$ we have $c_n\langle \frac{1}{\lambda_+^k} T \psi_n^k, \eta_{F_N}\rangle=c_n \frac{\vol(F_N\setminus T)}{\lambda_+^{k}}\le \frac{1}{n}$.

Since $\cvnbar$ is metrizable, and since $\lim_{n\to\infty} c_n T_+(\psi_n)= T_\ast$ and $c_nT_+(\psi_n)= \lim_{k\to\infty} \frac{c_n}{\lambda_+^k} T \psi_n^k$,
by a standard diagonalization argument we can find a sequence $m_n\ge k_n$ such that $\lim_{n\to\infty} \frac{c_n}{\lambda_+^{m_n}} T \psi_n^{m_n} =T_\ast$ in $\cvnbar$.  Put $T_n=\frac{c_n}{\lambda_+^{m_n}} T \psi_n^{m_n}$. Then by construction we have $\lim_{n\to\infty} T_n=T_\ast$ and $\langle T_n, \eta_{F_N}\rangle \le \frac{1}{n}\to 0$ as $n\to\infty$, so that
\[
\lim_{n\to\infty} \langle T_n, \eta_{F_N}\rangle =0.
\]

To construct the sequence $T_n'$, choose a free basis $\{b_1,\dots,b_i\}$ of $B$ and a free basis $\{c_{i+1},\dots, c_N\}$ of $C$. Let $\Gamma$ be the \lq\lq barbell\rq\rq\ graph (with the obvious marking) consisting of a non-loop edge $e$ with $i$ loop-edges (corresponding to $b_1,\dots, b_i$) attached at $o(e)$ and with $N-i$ loop-edges (corresponding to $c_{i+1},\dots, c_N$) attached at $t(e)$. Give the edge $e$ length $1-\frac{1}{n}$ and give each of $N$ loop-edges in $\Gamma$ length $\frac{1}{Nn}$. This defines a point $T_n'\in\cvnbar$ with $\vol(F_N\setminus T_n')=1$. Also, by construction, $\lim_{n\to\infty} T_n'=T_\ast$ in $\cvnbar$. Then 
\[
\lim_{n\to\infty} \langle T_n', \eta_{F_N}\rangle =\lim_{n\to\infty} \vol(F_N\setminus T_n')=1,
\]
as required.
\end{proof}

Proposition~\ref{prop:disc} immediately implies:

\begin{thm}\label{thm:disc}
Let $N\ge 3$. Then the co-volume form $\cvn\times \gcn\to R_{\ge 0}$ does not admit a continuous extension to a map $\cvnbar\times \gcn\to R_{\ge 0}$.
\end{thm}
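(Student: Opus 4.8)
The plan is to obtain Theorem~\ref{thm:disc} as a formal consequence of Proposition~\ref{prop:disc}, via a contradiction argument. Suppose the co-volume form $\cvn\times\gcn\to\R_{\ge 0}$ did admit a continuous extension to a map $\langle\,,\,\rangle\colon\cvnbar\times\gcn\to\R_{\ge 0}$. First I would fix the single subset current $\eta_{F_N}=\delta_{\partial F_N}\in\gcn$ (recall $\langle T,\eta_{F_N}\rangle=\vol(F_N\setminus T)$ for $T\in\cvn$, since $T$ is its own minimal $F_N$-invariant subtree, by Proposition-Definition~\ref{defn:intform}) and consider the restriction $F\colon\cvnbar\to\R_{\ge 0}$, $F(T):=\langle T,\eta_{F_N}\rangle$. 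Continuity of the putative extension, together with the fact that $\{\eta_{F_N}\}$ is a single point of $\gcn$, makes $F$ a continuous function on $\cvnbar$.

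Next I would invoke Proposition~\ref{prop:disc} for any nontrivial free product decomposition $F_N=B\ast C$ with Bass--Serre tree $T_\ast\in\cvnbar$: it supplies sequences $T_n,T_n'\in\cvn$ with $\lim_{n\to\infty}T_n=\lim_{n\to\infty}T_n'=T_\ast$ in $\cvnbar$, yet $\lim_{n\to\infty}F(T_n)=\lim_{n\to\infty}\langle T_n,\eta_{F_N}\rangle=0$ while $\lim_{n\to\infty}F(T_n')=\lim_{n\to\infty}\langle T_n',\eta_{F_N}\rangle=1$. Continuity of $F$ at the point $T_\ast$ then forces simultaneously $F(T_\ast)=0$ and $F(T_\ast)=1$, which is absurd. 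Hence no continuous extension of the co-volume form to $\cvnbar\times\gcn$ exists, which is exactly the statement of Theorem~\ref{thm:disc}.

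I do not expect any obstacle in this deduction itself --- it is purely formal once Proposition~\ref{prop:disc} is available. The genuine difficulty is entirely contained in Proposition~\ref{prop:disc}, whose proof is where I would concentrate effort. The delicate half is producing the sequence $T_n\to T_\ast$ along which the co-volume collapses to $0$: the idea is to start from the attracting tree $T_+(\phi)$ of an iwip $\phi$, realized as $\lim_k\lambda_+^{-k}T\phi^k$ (Proposition~\ref{prop:LL}), use Guirardel's Proposition~\ref{prop:Gui} to move $[T_+(\phi)]$ arbitrarily close to $[T_\ast]$ by conjugating $\phi$ by suitable $\theta_n\in\Out(F_N)$, exploit $\Out(F_N)$-invariance and $\R_{>0}$-homogeneity of the co-volume form to see that $\langle\lambda_+^{-k}T\psi_n^k,\eta_{F_N}\rangle=\lambda_+^{-k}\vol(F_N\setminus T)\to 0$, and then diagonalize in $k$ and $n$ to produce a single sequence $T_n\in\cvn$ converging to $T_\ast$ with co-volume tending to $0$. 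The complementary sequence $T_n'\to T_\ast$ with co-volume $1$ is easy: take the ``barbell'' marked metric graph adapted to $F_N=B\ast C$ with connecting edge of length $1-\frac1n$ and the $N$ remaining loop-edges of total length $\frac1n$, so that $\vol(F_N\setminus T_n')=1$ while $T_n'\to T_\ast$ as the loops shrink. Once Proposition~\ref{prop:disc} is established this way, Theorem~\ref{thm:disc} follows with no further work.
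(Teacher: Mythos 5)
Your proposal is correct and is essentially identical to the paper's treatment: the paper states that Proposition~\ref{prop:disc} ``immediately implies'' Theorem~\ref{thm:disc}, and the formal contradiction you spell out (continuity of $T\mapsto\langle T,\eta_{F_N}\rangle$ at $T_\ast$ versus the two sequences with limits $0$ and $1$) is exactly that implication. Your sketch of how Proposition~\ref{prop:disc} itself is proved — iwip attracting trees moved toward $[T_\ast]$ via Guirardel's minimal set, homogeneity giving $\lambda_+^{-k}\vol(F_N\setminus T)\to 0$, a diagonal argument, and the barbell graphs for the complementary sequence — also matches the paper's argument.
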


The proof of Theorem~\ref{thm:disc} can be modified to cover the case $N=2$, but we only deal with the case $N\ge 3$ for simplicity.

\begin{rem}\label{rem:H}
Let $H\le F_N$ be a nontrivial finitely generated subgroup. 
Recall that if $T\in\cvn$ then $\langle T, \eta_H\rangle=||H||_T=\vol(H\setminus T_H)$.  If $T\in\cvnbar-\cvn$, the quotient $H\setminus T_H$ is, in general, not a nice object, and, in particular, it is not necessarily a finite metric graph.  However, one can still define a reasonable notion of \lq\lq volume\rq\rq\  $||H||_T$ for $H\setminus T_H$. 
Namely, if $H$ fixes a point of $T$, put $||H||_T:=0$. Otherwise, there exists a unique minimal $H$-invariant subtree $T_H$ of $T$. In that case define $||H||_T$ as the infimum of $\vol(K)$ taken over all finite subtrees $K\subseteq T_H$ such that $HK=T_H$.  The proof of Proposition~\ref{prop:disc} exploits the fact that in general, given $H$, the function $f_H:\cvnbar\to\mathbb R$, $f_H:T\mapsto ||H||_T$, is not continuous on $\cvnbar$. However, one can show that $f_H$ is upper-semicontinuous.

\end{rem}

\section{The reduced rank functional}\label{sec:rrf}

Recall that for a finitely generated free group $F$ the \emph{rank}
$\rk(F)$ is the cardinality of a free basis of $F$ and the
\emph{reduced rank} $\rrk(F)$ is defined as
\[
\rrk(F):=\max\{\rk(F)-1, 0\}.
\] 
Thus is $F\ne \{1\}$ then $\rrk(F)=\rk(F)-1$. If $\Delta$ is a finite
connected graph and $F=\pi_1(\Delta)$ then $\rrk(F)=-\chi(\Delta)$,
where $\chi(\Delta)$ is the Euler characteristic of $\Delta$.
Reduced rank appears naturally in the context of the Hanna Neumann
Conjecture, recently proved by Mineyev~\cite{Min}. The conjecture (now Mineyev's theorem) states that if $H_1,H_2\le F$ are finitely
generated subgroups of a free group $F$ then
\[
\rrk(H_1\cap H_2)\le \rrk(H_1)\rrk(H_2).
\]
It turns out that reduced rank extends to a $\mathbb R_{\ge 0}$-linear
functional on the space of subset currents:

\begin{thm}\label{thm:rrk}
Let $N\ge 2$. Then there exists a unique continuous 
$\mathbb R_{\ge 0}$-linear
functional
\[
\rrk: \gcn\to\mathbb R_{\ge 0}
\]
such that for every nontrivial finitely generated subgroup $H\le F_N$
we have
\[
\rrk(\eta_H)=\rrk(H).\tag{$\diamondsuit$}
\]
Moreover, $\rrk$ is $\Out(F_N)$-invariant, that is, for any
$\phi\in\Out(F_N)$ and $\mu\in \gcn$ we have $\rrk(\phi\mu)=\rrk(\mu)$.
\end{thm}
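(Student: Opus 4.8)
The plan is to write $\rrk$ down \emph{explicitly} in terms of the weights relative to a fixed free basis, check it satisfies $(\diamondsuit)$ directly on counting currents, and then propagate everything to all of $\gcn$ by density of rational currents (Theorem~\ref{thm:dense}). First I would fix a free basis $A=\{a_1,\dots,a_N\}$ of $F_N$, the associated marking $\alpha=\alpha_A\colon F_N\to R_N$, and the Cayley tree $X=\widetilde R_N$, so that $ER_N=A^{\pm1}$ and $E_{top}R_N=A$. For every subset $L\subseteq A^{\pm1}$ with $2\le|L|\le 2N$ let $K_L\subseteq X$ be the ``star'' subtree consisting of the vertex $1\in F_N$ together with, for each $\ell\in L$, the edge of $X$ with origin $1$ and label $\ell$; it is a non-degenerate finite subtree of $X$ (it has $|L|\ge2$ leaves), so $K_L\in\mathcal K_\Gamma$. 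I would then \emph{define}
\[
\rrk(\mu):=\sum_{\substack{L\subseteq A^{\pm1}\\ 2\le|L|\le 2N}}\Bigl(\tfrac{|L|}{2}-1\Bigr)\,(K_L;\mu)_\alpha ,\qquad \mu\in\gcn .
\]
This is a \emph{finite} $\R_{\ge0}$-linear combination of the functions $\mu\mapsto(K_L;\mu)_\alpha=\mu(\mathcal SCyl_\alpha(K_L))$, each of which is finite-valued (local finiteness of $\mu$ together with compactness of $\mathcal SCyl_\alpha(K_L)$), continuous and $\R_{\ge0}$-linear in $\mu$ by Proposition-Definition~\ref{propdfn:weights}; since moreover each coefficient $\tfrac{|L|}{2}-1$ and each weight $(K_L;\mu)_\alpha$ is $\ge0$, the map $\rrk$ is a well-defined continuous $\R_{\ge0}$-linear functional $\gcn\to\R_{\ge0}$.

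The heart of the argument is verifying $(\diamondsuit)$. Given a nontrivial finitely generated $H\le F_N$, let $\Delta$ be the $R_N$-core graph representing $[H]$; by Corollary~\ref{cor:cc} we have $\eta_H=\mu_\Delta$, and $\Delta$ is a finite connected graph with $\pi_1(\Delta)\cong H$, so that $\rrk(H)=-\chi(\Delta)=\#E_{top}\Delta-\#V\Delta$. By Proposition~\ref{prop:wd}, $(K_L;\mu_\Delta)_\alpha=(K_L;\Delta)_\Gamma$ is the number of occurrences of $K_L$ in $\Delta$. Now an occurrence $f\colon K_L\to\Delta$ is a label-preserving $\Gamma$-graph morphism subject only to the link condition at the central vertex $c$ (the unique vertex of $K_L$ of degree $\ge2$), namely $Lk_\Delta(f(c))=Lk_{K_L}(c)=L$; once the image $v=f(c)$ is chosen $f$ is uniquely determined, and conversely any $v\in V\Delta$ with $Lk_\Delta(v)=L$ arises in this way, so $(K_L;\mu_\Delta)_\alpha=\#\{v\in V\Delta:Lk_\Delta(v)=L\}$. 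Since $\Delta$ is folded, $|Lk_\Delta(v)|=\deg_\Delta(v)$, and since $\Delta$ is a core graph $\deg_\Delta(v)\ge2$ for every $v$; hence every vertex of $\Delta$ is counted exactly once when we sum over all admissible $L$, and using the handshake identity $2\#E_{top}\Delta=\sum_{v\in V\Delta}\deg_\Delta(v)$ we obtain
\[
\rrk(\eta_H)=\sum_{L}\Bigl(\tfrac{|L|}{2}-1\Bigr)\#\{v:Lk_\Delta(v)=L\}=\sum_{v\in V\Delta}\Bigl(\tfrac{\deg_\Delta(v)}{2}-1\Bigr)=\#E_{top}\Delta-\#V\Delta=\rrk(H).
\]

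It then remains to deduce uniqueness and $\Out(F_N)$-invariance. For uniqueness: any continuous $\R_{\ge0}$-linear $f\colon\gcn\to\R_{\ge0}$ with $f(\eta_H)=\rrk(H)$ for all nontrivial finitely generated $H$ agrees with $\rrk$ on every rational current $r\eta_H$ by $\R_{\ge0}$-linearity, hence on the dense (Theorem~\ref{thm:dense}) set of rational currents, hence everywhere by continuity. For invariance, since inner automorphisms act trivially on $\gcn$ it suffices to treat $\phi\in\Aut(F_N)$; by Proposition~\ref{prop:r-act} ($\phi\eta_H=\eta_{\phi(H)}$) and the invariance of rank under automorphisms we get $\rrk(\phi(r\eta_H))=r\,\rrk(\phi(H))=r\,\rrk(H)=\rrk(r\eta_H)$, so $\rrk\circ\phi$ and $\rrk$ are two continuous functions (here one uses that $\phi\colon\gcn\to\gcn$ is a homeomorphism, Proposition~\ref{prop:action}) agreeing on the dense set of rational currents, whence $\rrk\circ\phi=\rrk$, and the invariance descends to $\Out(F_N)$. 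The only genuinely substantive point is the combinatorial identification in the previous paragraph — recognizing the Euler characteristic of a core graph as one fixed $\R_{\ge0}$-combination of occurrence counts of star-subtrees, together with the mild bookkeeping through the links $Lk_\Delta(v)\subseteq A^{\pm1}$; everything else is formal.
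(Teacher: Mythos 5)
Your proof is correct and follows essentially the same route as the paper: an explicit linear combination of cylinder weights of one-edge trees and radius-one star subtrees relative to a fixed basis, verified on counting currents by identifying occurrences of a star $K_L$ in the core graph $\Delta$ with the vertices of $\Delta$ whose link equals $L$, with uniqueness and $\Out(F_N)$-invariance then obtained from density of rational currents. The only (minor, but pleasant) difference is that you package the functional as $\sum_L\bigl(\tfrac{|L|}{2}-1\bigr)(K_L;\mu)_\alpha$ with nonnegative coefficients, so that $\rrk(\mu)\ge 0$ is manifest, whereas the paper writes the same functional as $\sum_{a\in A}(e_a;\mu)-\sum_{K\in\mathcal B_N}(K;\mu)$ and must deduce nonnegativity by an extra density-and-continuity argument.
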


\begin{proof}
The uniqueness of $\rrk$ follows from $(\diamondsuit)$ and from the requirement that $\rrk$ be linear
and continuous, since this uniquely defines $\rrk$ on the set of
rational subset currents, which is dense in $\gcn$ by
Theorem~\ref{thm:dense}.
Thus it suffices to prove the existence of a functional $\rrk$ with
the required properties.

Choose a free basis $A$ of $F_N$ and the corresponding marking
$\alpha_A:F_N\to\pi_1(R_N)$ as in Convention~\ref{conv:GA}. Recall that $X=\widetilde R_N$ is the Cayley graph of $F_N$ with respect to $A$.
For each $a\in A$ let $e_a$ be the topological edge in $X$ with
endpoints $1,a\in F_N$. Note that every subgraph in $X$ consisting of
a single edge is a translate of some $e_a$ by an element of $F_N$.

Let $\mathcal B_N$ be the set of all non-degenerate finite subtrees $K$
contained in the ball of radius $1$ in $X$ with center $1\in F_N$ such
that the vertex $1\in F_N$ has degree $\ge 2$ in $K$. Note that every
$K\in \mathcal B_N$ is uniquely specified by the set of its 
vertices of degree 1, which is a subset of $A\cup A^{-1}$ of cardinality at least
$2$. Thus there are exactly $2^{2N}-2N-1$ elements in $\mathcal B_N$.
Note also that if $\Delta$ is a nontrivial finite $R_N$-core graph then for every
vertex $x$ of $\Delta$ there exists a unique $K\in \mathcal B_N$ such
that $Lk_\Delta(x)=Lk_{K}(1)$.
Define the function $\rrk:\gcn\to\mathbb R$ as follows. For $\mu\in\gcn$
\[
\rrk(\mu):=\sum_{a\in A} (e_a; \mu) - \sum_{K\in \mathcal
  B_N} (K;\mu).
\]
By construction $\rrk:\gcn\to\mathbb R$ is a continuous $\mathbb R_{\ge 0}$-linear function.

Suppose now that $H\le F_N$ is a nontrivial finitely generated
subgroup. Let $\Delta$ be the  finite $R_N$-core graph representing $[H]$.
Then
\begin{gather*}
\rrk(\mu_\Delta):=\sum_{a\in A} (e_a; \mu_\Delta) - \sum_{K\in \mathcal
  B_N} (K;\mu_\Delta)=\\
\sum_{a\in A} (e_a; \Delta) - \sum_{K\in \mathcal
  B_N} (K;\Delta).
\end{gather*}
It is easy to see, from the definition of an occurrence (Definition \ref{defn:occur}), that
$\sum_{a\in A} (e_a; \Delta)=\#E_{top}\Delta$ and that $\sum_{K\in \mathcal B_N} (K;\Delta)=\#V\Delta$.
Thus
\[
\rrk(\eta_H)=\rrk(\mu_\Delta)=\#E_{top}\Delta-\#V\Delta=-\chi(\Delta)=\rrk(H).
\]
Note that, by linearity, for any $c\ge 0$ $\rrk(c\eta_H)=c\rrk(H)\ge
0$. Thus $\rrk\ge 0$ on a dense subset of $\gcn$ and hence, by
continuity, $\rrk(\mu)\ge 0$ for every $\mu\in \gcn$.

Suppose now that $\phi\in \Aut(F_N)$. We claim that
$\rrk(\phi\mu)=\rrk(\mu)$ for every $\mu\in\gcn$. Indeed, for any finitely generated subgroup $H\le F_N$, $H$ is isomorphic to $\phi(H)$ and hence $\rrk(H)=\rrk(\phi(H))$. Thus, in view of $(\diamondsuit)$, the claim holds for every rational
subset current and hence, since by Theorem~\ref{thm:dense} rational currents are dense in $\gcn$, the claim holds for every
$\mu\in \gcn$. This shows that $\rrk$ is $\Aut(F_N)$-invariant, and hence $\Out(F_N)$-invariant as well.
\end{proof}

\section{Analogs of uniform currents}\label{sec:uniform}

In~\cite{Ka2}, given a free basis $A$ of $F_N$ we constructed a
\emph{uniform current} $m_A\in\Curr(F_N)$ associated to $A$. Intuitively,
the current $m_A$ \lq\lq splits equally\rq\rq\ in all directions in the Cayley
graph $X$ of $F_N$ with respect to $A$.

In the setup of subset currents, given a free basis $A$ of $F_N$
one can define a natural family $m_{A,d}\in \gcn$, $d=2,\dots, 2N$ of
\lq\lq uniform subset currents\rq\rq , with $m_{A,2}=m_A$ and
$m_{A,2N}=\eta_{F_N}$. The current $m_{A,d}$ is \lq\lq supported\rq\rq\ on
$d$-regular subtrees of $X$. That is, $m_{A,d}$ will have the property
that $(K;m_{A,d}) >0$ if and only if $K\subseteq X$ is a
finite non-degenerate subtree where every vertex has degree either $1$ or  $d$ in $K$.

Before giving the explicit definition of $m_{A,d}$ we present the
following computation as motivation. Since $m_{A,d}$ is a subset current, we need it to satisfy the weights condition $(\bigstar)$ from Proposition~\ref{prop:kirch}.
Let $K$ be a non-degenerate finite subtree of $X$ where every vertex has degree $1$ or $d$ in $K$. Let $e$ be a terminal edge of $K$, as in Definition \ref{defn:gcyl}, and let $a\in A^{\pm 1}$ be the label of $e$. Then, in notations \ref{notation:edges}, $q(e)$ consists of precisely $2N-1$ distinct edges, with
labels from $A^{\pm 1}-\{a^{-1}\}$. We want to choose a nonempty subset $U\subseteq q(e)$ so that in the tree $K'=K\cup U$ the terminus of $e$ has degree $d$. Thus the set $U$ needs to have cardinality $d-1$. Hence there are exactly $\binom{2N-1}{d-1}$ choices
for $U\subseteq q(e)$. Since $m_{A,d}$ is supposed to be the \lq\lq most symmetric possible\rq\rq , we assign each of these choices equal weight, so we want
\[
(K\cup U; m_{A,d})=\frac{(K;m_{A,d})}{\binom{2N-1}{d-1}}
\] 
for every subset $U\subseteq q(e)$ of size $d-1$. 
For nonempty subsets $U\subseteq q(e)$ of size different from $d-1$ we want $(K\cup U; m_{A,d})=0$.
Then, using notations \ref{notation:edges}, we will have
\[
( K; m_{A,d})=\sum_{U\in P_+(q(e))} (K\cup U; m_{A,d})
\]
as required by $(\bigstar)$. Assigning every one-edge subtree of $X$ weight $1/N$ in $m_{A,d}$ and iterating the above splitting formula
yields the following:

\begin{propdfn}[Uniform subset currents]
Let $2\le d\le 2N$. Then there exists a unique subset current $m_{A,d}\in \gcn$ such that
\begin{enumerate}
\item if $K\subseteq X$ is a finite subtree with $n\ge 1$ edges and
  with every  vertex of degree either $1$ or $d$ in $K$, then
\[
( K; m_{A,d})=\frac{1}{N\left(\binom{2N-1}{d-1}\right)^{n-1}} ;
\]
\item if $K\subseteq X$ is a finite subtree where some
  vertex has degree different from $d$ and from $1$, then 
\[
( K; m_{A,d})=0 ;
\]
\item We have $\langle X,  m_{A,d}\rangle=1$.
\end{enumerate}
The current $m_{A,d}\in \gcn$ is called the \emph{uniform subset current} of grade $d$ on $F_N$ corresponding to $A$. 
\end{propdfn}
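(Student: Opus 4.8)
The plan is to build $m_{A,d}$ using Proposition~\ref{prop:weights}: it suffices to define a function $\vartheta = \vartheta_{A,d}$ on $\mathcal K_{R_N}$ satisfying the $F_N$-invariance and Kirchhoff conditions, and then rescale so that condition (3) holds. First I would define $\vartheta$ on trees $K$ by the explicit formulas prescribed in (1) and (2): if $K$ is a finite non-degenerate subtree of $X$ with $n\ge 1$ edges in which every vertex has degree $1$ or $d$, set $\vartheta(K) := \frac{1}{N\left(\binom{2N-1}{d-1}\right)^{n-1}}$, and otherwise set $\vartheta(K) := 0$. (When $d = 2N$ one has $\binom{2N-1}{2N-1}=1$, and the only such $K$ are segments, recovering $\eta_{F_N}$; when $d=2$ one recovers $m_A$.) Since the standard marking $\alpha_A$ identifies $F_N = \pi_1(R_N)$ and $F_N$ acts on $X$ by label-preserving automorphisms, the degree sequence of $K$ and the number $n$ of edges of $K$ are $F_N$-invariant, so $\vartheta(gK) = \vartheta(K)$ for all $g\in F_N$; this is condition (1) of Proposition~\ref{prop:weights}.

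The key step is verifying the Kirchhoff identity, condition (2) of Proposition~\ref{prop:weights}: for every $K\in\mathcal K_{R_N}$ and every terminal edge $e$ of $K$,
\[
\vartheta(K)=\sum_{U\in P_+(q(e))}\vartheta(K\cup U).
\]
Here I would split into cases according to whether $\vartheta(K) = 0$ or not. If $\vartheta(K) = 0$, then $K$ has a vertex of degree $\ne 1, d$; this vertex is still present in every $K\cup U$ (adding edges at the terminal vertex $t(e)$ cannot change the degree of any other vertex), so the right-hand side is a sum of zeros. If $\vartheta(K) > 0$, then every vertex of $K$ has degree $1$ or $d$; in $K\cup U$ the only vertex whose degree changes is $x = t(e)$, which had degree $1$ in $K$ and acquires degree $1 + |U|$ in $K\cup U$ (since $e$ together with the $|U|$ new edges of $U$ are precisely the edges of $K\cup U$ at $x$, and $q(e)$ has exactly $2N-1$ elements with distinct labels, all reduced extensions of $e$). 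Thus $\vartheta(K\cup U) = 0$ unless $|U| = d-1$, in which case $K\cup U$ has $n + (d-1)$ edges and all vertices of degree $1$ or $d$, so $\vartheta(K\cup U) = \frac{1}{N\left(\binom{2N-1}{d-1}\right)^{n+d-2}}$. There are exactly $\binom{2N-1}{d-1}$ subsets $U\subseteq q(e)$ of size $d-1$ (here the hypothesis $d\le 2N$, equivalently $d - 1 \le 2N-1$, guarantees this binomial coefficient is positive and the count makes sense; when $d - 1 > 2N - 1$ there would be no valid $U$ and the recursion would collapse, which is why the grade range is $2\le d\le 2N$). Hence the right-hand side equals $\binom{2N-1}{d-1}\cdot\frac{1}{N\left(\binom{2N-1}{d-1}\right)^{n+d-2}} = \frac{1}{N\left(\binom{2N-1}{d-1}\right)^{n-1}} = \vartheta(K)$, as needed. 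Proposition~\ref{prop:weights} then yields a unique $\widetilde m\in\gcn$ with $(K;\widetilde m)_{\alpha_A} = \vartheta(K)$ for all $K$, and by the same proposition it is the unique subset current satisfying (1) and (2).

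Finally I would arrange condition (3). By Proposition-Definition~\ref{defn:intform}, $\langle X, \widetilde m\rangle = \sum_{e\in E_{top}(R_N)} (e;\widetilde m)_{\alpha_A}\cdot 1 = \sum_{a\in A}\vartheta(\widetilde e_a)$, and each one-edge tree $\widetilde e_a$ is degenerate in the sense of having no degree-$d$ vertex — but note a single edge has both endpoints of degree $1$, so it falls under case (1) with $n = 1$, giving $\vartheta(\widetilde e_a) = \frac{1}{N}$; summing over the $N$ basis edges gives $\langle X,\widetilde m\rangle = 1$ already. So in fact $m_{A,d} := \widetilde m$ satisfies all three conditions, and uniqueness under (1)--(2) (hence under (1)--(3)) is exactly the uniqueness clause of Proposition~\ref{prop:weights}. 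The main obstacle is purely bookkeeping: one must be careful that the degree-counting at the terminal vertex is exactly right — in particular that $q(e)$ consists of precisely the $2N-1$ label-distinct reduced continuations of $e$ in the regular tree $X = \widetilde R_N$, so that "choosing $U$ of size $d-1$" is literally "choosing $d-1$ of $2N-1$ labels" — and that adding edges at $t(e)$ genuinely leaves all other vertex degrees untouched.
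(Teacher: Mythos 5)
Your overall strategy --- define the weight function $\vartheta$ by the prescribed formulas, check $F_N$-invariance and the Kirchhoff condition, and invoke Proposition~\ref{prop:weights} --- is exactly the route the paper takes (the paper's own proof is a one-line ``easy to check via direct computation''). Your case analysis is also set up correctly: only $t(e)$ changes degree, so the only nonzero contributions come from $U$ with $\#U=d-1$, of which there are $\binom{2N-1}{d-1}$. The problem is the final arithmetic. Writing $C=\binom{2N-1}{d-1}$, you correctly note that $K\cup U$ has $n+d-1$ edges, so $\vartheta(K\cup U)=\frac{1}{NC^{n+d-2}}$; but then the right-hand side of the Kirchhoff identity is
\[
C\cdot\frac{1}{NC^{n+d-2}}=\frac{1}{NC^{n+d-3}},
\]
which equals $\vartheta(K)=\frac{1}{NC^{n-1}}$ only when $d=2$. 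So your claimed equality is false for $d>2$, and in fact the computation, carried out correctly, shows that the weights prescribed in item (1) of the statement do \emph{not} satisfy condition (2) of Proposition~\ref{prop:weights} when $d>2$: for instance, with $N=2$ and $d=3$ a single edge has weight $\tfrac12$, while each of the three admissible extensions has weight $\tfrac1{18}$, and these sum to $\tfrac16\ne\tfrac12$.

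The source of the discrepancy is that each splitting step adds $d-1$ edges but should divide the weight by $C$ only once, so the exponent must count splitting steps, i.e.\ interior (degree-$d$) vertices, rather than edges. The degree-sum identity gives $(d-1)\,\iota(K)=n-1$ for any finite tree all of whose vertices have degree $1$ or $d$, so the consistent system of weights is $(K;m_{A,d})=\frac{1}{NC^{\iota(K)}}=\frac{1}{NC^{(n-1)/(d-1)}}$, which agrees with the stated formula exactly when $d=2$. This is really an erratum in the statement itself, which the paper's terse proof does not catch; your write-up should either prove the corrected statement with exponent $\iota(K)$ or flag the inconsistency, rather than assert an equality that fails for every $d>2$. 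The remainder of your argument (the invariance of $\vartheta$, the vanishing case, the count $\#q(e)=2N-1$, and the normalization $\sum_{a\in A}\vartheta(\widetilde e_a)=1$) goes through unchanged once the exponent is repaired.
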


\begin{proof}
It is easy to check, via a direct computation, that the weights
$(K; m_{A,d})$ specified above satisfy condition
$(\bigstar)$ and hence, by Proposition~\ref{prop:kirch}, there does
exist a unique subset current realizing these weights.
Also, for every single-edge tree $K$ we have $(K; m_{A,d})=\frac{1}{N}$, and this normalization ensures 
that  $\langle X,  m_{A,d}\rangle=1$.
For $d=2N$ the above formulas give $(K; m_{A,2N})=1$ if
$K\subseteq X$ is a finite $2N$-regular subtree and $(K;
m_{A,2N})=0$ if $K$ is not $2N$-regular. This shows that $m_{A,2N}=\eta_{F_N}$.
Also, for $d=2$, the definition of $m_{A,2}$ yields 
$(K; m_{A,2})=\frac{1}{2N(2N-1)^n}$ if $K\subseteq X$ is a
linear segment of length $n\ge 1$ and $(K; m_{A,2})=0$ if
$K$ is not 2-regular. Thus $m_{A,2}=m_A\in\Curr(F_N)$, as claimed.
\end{proof}

In a similar way, uniform subset currents can be defined with respect to any
marking $\alpha:F_N\to\pi_1(\Gamma)$, where $\Gamma$ is a $k$-regular
graph (and not necessarily the standard rose).

Perhaps a more interesting notion is that of the \emph{absolute uniform current} $m^\mathcal S_{A}\in \gcn$ associated to $A$.

If $K$ is a finite non-degenerate tree, we say that a vertex $x$ of $K$ is \emph{interior} if $x$ has degree $\ge 2$ in $K$. Denote by $\iota(K)$ the number of interior vertices in $K$.  Before giving a formal definition of $m^\mathcal S_{A}$ let us again start with some motivation. We want $m^\mathcal S_{A}$ to have the property that for a finite non-degenerate subtree $K$ of $X$ the weight $( K; m^\mathcal S_{A})$ depends only on $\iota(K)$.

We build finite subtrees of $X$ step-by-step, starting with a tree $K_0$ consisting of a single edge. Since  there are exactly $N$ distinct $F_N$-translation classes of topological edges in $X$, we assign every one-edge subtree weight $\frac{1}{N}$ in $m^\mathcal S_{A}$.  Note that $\iota(K_0)=0$. Arguing inductively, let $n\ge 0$ and suppose $K_n$ is already constructed and that $\iota(K_n)=n$. We choose $e$ to be any (oriented) leaf  of $K$. Then $q(e)$ (see notation \ref{notation:edges}) consists of precisely $2N-1$ distinct edges, with
labels from $A^{\pm 1}-\{a^{-1}\}$. The set $P_+(q(e))$ of nonempty subsets of $q(e)$ has exactly $2^{2N-1}-1$ elements. We choose any nonempty subset $U$ of $q(e)$ \lq\lq with equal probability\rq\rq , and put $K_{n+1}=K\cup U$. Note that the terminus of $e$ has become an interior vertex of $K_{n+1}$, so that $\iota(K_{n+1})=n+1$.
Since we are supposed to choose $U\in P_+(q(e))$ uniformly at random, we want 
\[
(K\cup U; m_{A,d})=\frac{(K; m_{A,d})}{2^{2N-1}-1}
\] 
for every nonempty subset $U\subseteq q(e)$.
This choice will assure that
\[
( K;m_{A,d})=\sum_{U\in P_+(q(e))} (K\cup U; m_{A,d})
\]
as required by condition $(\bigstar)$ of Proposition~\ref{prop:kirch}. 
The above considerations lead to the following:

\begin{propdfn}[Absolute uniform current]\label{propdfn:mag}
Let $N\ge 2$, $A$ be a free basis of $F_N$ and let $X$ be the Cayley graph of $F_N$ with respect to $A$.

Then there exists a unique subset current
$m_{A}^\mathcal S\in \gcn$ such that:
\begin{enumerate}
\item If $K\subseteq X$ is a finite non-degenerate subtree with $\iota(K)=n\ge 0$ then
\[
(K; m_{A}^\mathcal S)=\frac{1}{N\left( 2^{2N-1}-1  \right)^{n-1}}.
\]
\item We have $\langle X,  m_{A,d}\rangle=1$.
\end{enumerate}

The current $m_{A}^\mathcal S\in \gcn$ is called the \emph{absolute uniform subset
  current} corresponding to $A$. 
\end{propdfn}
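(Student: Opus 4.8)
The plan is to verify the existence and uniqueness of the absolute uniform subset current $m_A^{\mathcal S}$ by applying Proposition~\ref{prop:kirch} (the Kirchhoff characterization of subset currents), exactly as in the proof of the preceding Proposition-Definition for the uniform subset currents $m_{A,d}$. First I would set $X=\widetilde R_N$ to be the Cayley graph of $F_N$ with respect to $A$, and define a candidate weight function $\vartheta:\mathcal K_{R_N}\to[0,\infty)$ by
\[
\vartheta(K):=\frac{1}{N\left(2^{2N-1}-1\right)^{\iota(K)-1}}
\]
for every finite non-degenerate subtree $K\subseteq X$, where $\iota(K)$ is the number of interior vertices of $K$. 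This assignment is visibly $F_N$-invariant, since $\iota(gK)=\iota(K)$ for all $g\in F_N$, so condition (1) of Proposition~\ref{prop:weights} holds automatically.

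The main step is to check the Kirchhoff splitting relation, condition (2) of Proposition~\ref{prop:weights} (equivalently $(\bigstar)$ of Proposition~\ref{prop:kirch}): for any terminal edge $e$ of $K$ with label $a\in A^{\pm1}$, the set $q(e)$ consists of exactly $2N-1$ edges, so $P_+(q(e))$ has exactly $2^{2N-1}-1$ elements. For each nonempty $U\subseteq q(e)$, the tree $K\cup U$ has exactly one more interior vertex than $K$, namely $t(e)$, so $\iota(K\cup U)=\iota(K)+1$ and hence $\vartheta(K\cup U)=\vartheta(K)/(2^{2N-1}-1)$. Summing over the $2^{2N-1}-1$ choices of $U$ gives
\[
\sum_{U\in P_+(q(e))}\vartheta(K\cup U)=(2^{2N-1}-1)\cdot\frac{\vartheta(K)}{2^{2N-1}-1}=\vartheta(K),
\]
so the relation holds. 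By Proposition~\ref{prop:weights} there exists a unique $\mu\in\gcn$ with $(K;\mu)_{\alpha_A}=\vartheta(K)$ for all $K$, and we set $m_A^{\mathcal S}:=\mu$; uniqueness of $m_A^{\mathcal S}$ subject to condition (1) of the statement is immediate since (1) determines all the weights.

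Finally I would verify the normalization in condition (2): for a single-edge subtree $K_0$ we have $\iota(K_0)=0$, so $\vartheta(K_0)=N\cdot(2^{2N-1}-1)^{-(-1)}/N$... more simply, plugging $n=0$ gives $(K_0;m_A^{\mathcal S})=\tfrac{1}{N}(2^{2N-1}-1)$. One checks this is the intended value and that with the edge-lengths of $X$ all equal to $1$,
\[
\langle X, m_A^{\mathcal S}\rangle=\sum_{a\in A}(e_a;m_A^{\mathcal S})\,\mathcal L(e_a)=N\cdot\frac{1}{N}(2^{2N-1}-1)^{1}\cdot 1,
\]
which must be reconciled with the claimed value $1$; if the weight on a single edge were instead $\tfrac1N$ the pairing would equal $1$, so I would note the consistent normalization is $(K_0;m_A^{\mathcal S})=\tfrac1N$ and interpret formula (1) with the convention that the exponent $n-1$ with $n=1$ (a two-edge tree having one interior vertex) reproduces $\tfrac1N$. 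The only real subtlety — and the step I expect to need the most care — is making the bookkeeping in condition (1) internally consistent (whether one indexes by $\iota(K)$ or by the number of interior vertices counted with the single-edge base case) so that the splitting relation, the stated closed formula, and the normalization $\langle X, m_A^{\mathcal S}\rangle=1$ all agree; once the indexing is fixed, everything else is a routine application of Propositions~\ref{prop:kirch} and \ref{prop:weights}.
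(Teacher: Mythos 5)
Your argument is correct and is exactly the paper's: the published proof consists of the single remark that the stated weights satisfy the switch condition $(\bigstar)$ of Proposition~\ref{prop:kirch}, and you have carried out that verification in detail (the key points being $\#q(e)=2N-1$, hence $\#P_+(q(e))=2^{2N-1}-1$, and $\iota(K\cup U)=\iota(K)+1$ for every nonempty $U\subseteq q(e)$, so that the weights split equally).

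One remark on the normalization issue you flag at the end. You are right that conditions (1) and (2) of the statement are mutually inconsistent as printed; this is a typo in the paper (as is the appearance of $m_{A,d}$ in condition (2)). But your proposed reconciliation is itself off: the consistent convention is that the exponent should be $n=\iota(K)$ rather than $n-1$, so that a single edge ($\iota=0$) has weight $\frac{1}{N}$ and a tree with exactly one interior vertex has weight $\frac{1}{N(2^{2N-1}-1)}$. It cannot be the case that a two-edge tree ``reproduces $\frac1N$,'' since that would violate the Kirchhoff relation at the single edge. None of this affects your existence and uniqueness argument: any weight function of the form $c\cdot\left(2^{2N-1}-1\right)^{-\iota(K)}$ with $c>0$ satisfies $(\bigstar)$ and hence defines a subset current by Proposition~\ref{prop:weights}; the choice $c=\frac1N$ is the one giving $\langle X, m_{A}^{\mathcal S}\rangle=1$.
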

\begin{proof}
It is not hard to check that the weights given in the definition  of $m_A^\mathcal S$  satisfy the \lq\lq switch\rq\rq\  condition $(\bigstar)$ of Proposition~\ref{prop:kirch}, which directly yields the above statement.
\end{proof}

Note that, unlike the currents $m_{A,d}$ constructed earlier, the current $m_A^\mathcal S$ has \lq\lq full support\rq\rq , meaning that $(K;m_A^\mathcal S) >0$ for \emph{every} finite non-degenerate subtree $K$ of $X$. Recall that, by Proposition~\ref{prop:m1}, there is a canonical $\mathbb R_{\ge 0}$-linear homeomorphism between $\gcn$ and the space $\mathcal M_1(X)$ of finite positive Borel measures on the space $\mathcal T_1(X)$ of rooted subtrees of $X$ with root-vertex $1$, which are invariant with respect to root-change. Under this homeomorphism $m_A^\mathcal S$ corresponds to a probability measure $\mathbf m_A^\mathcal S$ on $\mathcal T_1(X)$ and $\mathbf m_A^\mathcal S$-random points of  $\mathcal T_1(X)$ appear to represent an interesting class of subtrees of $X$.

\section{Open problems}\label{sec:problems}

\subsection{Subset currents on surface groups}\label{subsec:surface}

As noted in the introduction, the notion of a subset current makes sense for any non-elementary word-hyperbolic group $G$. Apart from $F_N$, a particularly interesting case is that of a surface group. Namely, let $\Sigma$ be a closed oriented surface of negative Euler characteristic and let $G=\pi_1(\Sigma)$.  Put a hyperbolic Riemannian metric $\rho$ on $\Sigma$ so that $\widetilde \Sigma$ with the lifted metric becomes isometric to the hyperbolic plane $\mathbb H^2$.  The action of $G$ on $\widetilde \Sigma=\mathbb H^2$ by covering transformations is a free isometric discrete co-compact action, so that $G$ is quasi-isometric to $\mathbb H^2$ and $\partial G$ is $G$-equivariantly homeomorphic to $\partial \mathbb H^2=\mathbb S^1$.  As for a free group we can consider the space $\mathfrak(\mathbb S^1)$ of all closed subsets $S\subseteq \mathbb S^1$ such that $\#S\ge 2$. A \emph{subset current} on $G$ is a locally finite $G$-invariant positive Borel measure on $\mathfrak(\mathbb S^1)$. The space $\mathcal S \Curr(G)$ of all subset currents on $G$ again comes equipped with a natural weak-* topology and a natural action of the mapping class group $Mod(\Sigma)$.  In this context we can again define the notion of a \emph{counting current} associated with a nontrivial finitely generated subgroup $H\le G$. If $H=Comm_G(H)$ then we put
\[
\eta_H:=\sum_{H_1\in [H] }\delta_{\Lambda(H_1)}
\] 
where $[H]$ is the conjugacy class of $H$ in $G$ and where for a subgroup $H_1\in [H]$ $\Lambda(H_1)\subseteq \mathbb S^1$ is the limit set of $H_1$ in $\mathbb S^1$. 
If $H\le G$ is an arbitrary nontrivial finitely generated subgroup, then, by well-known results, $H$ has a finite index $m\ge 1$ in its commensurator $H_0:=Comm_G(H)$ and we put $\eta_H:=m\  \eta_{H_0}$.
\begin{prob}\label{prob:surface}
In the above set-up, is it true that the set
\[
\{ c\eta_H | c\ge 0, H\le G \text{ is a nontrivial finitely generated subgroup} \}
\]
is dense in $\mathcal S \Curr(G)$?
\end{prob}
Note that for every $S\in \mathfrak(\mathbb S^1)$ one can consider the convex hull $Conv(S)\subseteq \mathbb H^2$, and therefore one can geometrically view a subset current on $G$ as a $G$-invariant measure on the space of \lq\lq nice\rq\rq\  convex subsets of $\mathbb H^2$. If $H\le G$ is a nontrivial finitely generated subgroup of infinite index, then $H$ is a free group of finite rank $N\ge 1$ and $Conv(\Lambda H)\subseteq \mathbb H^2$ is an $H$-invariant subset which is quasi-isometric to $F_N$ (i.e. it is a \lq\lq quasi-tree\rq\rq ). However, the machinery of measures on rooted graphs that we used to show that rational currents are dense in  $\gcn$ is not directly applicable for tackling Problem~\ref{prob:surface}.

\subsection{Continuity of co-volume for a fixed tree $T\in\cvnbar$}\label{subsec:continuity}

In Remark~\ref{rem:H} we defined the notion of co-volume $||H||_T$ where $H\le F_N$ is any nontrivial finitely generated subgroup and where $T\in \cvnbar$. We have seen that for a fixed $H$, the function $\cvnbar\to [0,\infty)$, $T\to ||H||_T$, is not necessarily continuous on $\cvnbar$ (although it is, of course, continuous if $H$ is infinite cyclic).

One can still ask if, given a fixed $T\in\cvnbar$, the co-volume $||.||_T$ extends to a continuous function on $\gcn$:

\begin{prob}\label{prob:covol}
Let $T\in\cvnbar$. Suppose $\mu\in \gcn$ and that $c_n\eta_{H_n}\in \gcn$ are rational currents such that $\lim_{n\to\infty} c_n\eta_{H_n}=\mu$. Does this imply that
\[
\lim_{n\to\infty} c_n||H_n||_{T}
\]
exists and is independent of the sequence  $c_n\eta_{H_n}$ approximating $\mu$? If yes, we will denote the above limit by $||\mu||_T$. 
\end{prob}

\subsection{Volume equivalence}\label{subsec:equivalence}

Kapovich, Levitt, Schupp and Shpilrain~\cite{KLSS} introduced and studied the notion of \emph{translation equivalence} in free groups. Namely, two elements $g,h\in F_N$ are \emph{translation equivalent} in $F_N$, denoted $g\equiv_t h$,  if for every $T\in\cvn$ we have $||g||_T=||h||_T$. It is easy to see that $g\equiv_t h$ in $F_N$ if and only if for every $T\in\cvnbar$  $||g||_T=||h||_T$. Similarly, we say (see \cite{Ka2}) that two currents $\mu_1,\mu_2\in\Curr(F_N)$ are \emph{translation equivalent} if for every $T\in \cvn$ $\langle T, \mu_1\rangle=\langle T,\mu_2\rangle$.  Again, it clear that replacing $\cvn$ by $\cvnbar$ in this definition yields the same notion.  Several different sources of translation equivalence (in particular traces of $SL(2,\mathbb C)$-representations of free groups), have been exhibited in~\cite{KLSS}, and further results were obtained in~\cite{Lee,Lee2,LV}. The paper~\cite{KLSS} also defined the notion of \emph{volume equivalence} in free groups. Two nontrivial finitely generated subgroups $H_1,H_2\le F_N$ are said to be \emph{volume equivalent} in $F_N$, denoted $H_1\equiv_v H_2$,  if for every $T\in \cvn$ we have $||H_1||_T=||H_2||_T$. Note that for nontrivial $g,h\in F_N$ we have $\langle g\rangle\equiv_v \langle h\rangle$ in $F_N$ if and only if $g\equiv_t h$ in $F_N$.  It is clear that conjugate subgroups are volume equivalent and that if $H_1,H_2\le F_N$ are nontrivial finitely generated subgroups such that $Comm_{F_N}(H_1)=Comm_{F_N}(H_2)$ and such that $H_1$ and $H_2$ have the same index in $Comm_{F_N}(H_1)$, then $H_1\equiv_v H_2$ in $F_N$. Similarly, the definition of volume equivalence easily implies that if $H_1\equiv_v H_2$ in $F_N$ and $\psi: F_N\to F_M$ is an injective homomorphism then $\psi(H_1)\equiv_v \psi(H_2)$ in $F_M$. Some more interesting sources of volume equivalence were found by Lee and Ventura in~\cite{LV}, who also exhibited an example of a cyclic subgroup that is volume equivalent to a subgroup that is free of rank 2.  The definition of volume equivalence naturally leads to the following question:

\begin{prob}\label{prob:ve}
Suppose $H_1\equiv_v H_2$ in $F_N$. Does this imply that for every $T\in\cvnbar$ $||H_1||_T=||H_2||_T$?
\end{prob}
By analogy with the translation equivalence case, we can also say that two subset currents $\mu_1,\mu_2\in \gcn$ are \emph{volume equivalent} in $F_N$, denoted $\mu_1\equiv_v \mu_2$, if for every $T\in \cvn$ $\langle T, \mu_1\rangle=\langle T,\mu_2\rangle$.  Thus for finitely generated subgroups $H_1,H_2\le F_N$ we have $H_1\equiv_v H_2$ if and only if $\eta_{H_1}\equiv_v \eta_{H_2}$. The notion of volume equivalence for subset currents measures the degeneracy of the co-volume form $\langle\, ,\, \rangle$ with respect to its second argument, and one can also pose an analog of Problem~\ref{prob:ve} for subset currents.

\subsection{Generalizing the Stallings fiber product construction}\label{subsec:fiber}

Let $H, L\le F_N$ be nontrivial finitely generated subgroups and let $\alpha:F_N\isom\pi_1(\Gamma)$ be a marking on $F_N$.
Let $\Delta_H, \Delta_L$ be the  finite connected $\Gamma$-core graphs representing $[H]$ and $[L]$ respectively.
Then one can define (see~\cite{Sta,KM}) the
\lq\lq fiber product graph\rq\rq\ $\Delta_H\times \Delta_L$. This graph is again a finite folded $\Gamma$-graph but not necessarily connected. The
fundamental groups of the non-contractible connected components of
$\Delta_H\times \Delta_L$ (if there are any such
components) represent all the possible $F_N$-conjugacy classes of nontrivial
intersections of the form $gHg^{-1}\cap L$, where $g\in F_N$. There are finitely many such non-contractible components and
  denote the conjugacy classes of subgroups of $F_N$ represented by
  them by $[U_1],\dots, [U_k]$. We thus define
\[
\pitchfork(\eta_H,\eta_L)=\sum_{i=1}^k \eta_{U_i}.
\]
Note that $\rrk(\pitchfork(\eta_H,\eta_L))=\sum_{i=1}^k \rrk(\eta_{U_i})=\sum_{i=1}^k \rrk(U_i)$.
If all connected components of $\Delta_H\times \Delta_L$ are contractible, define $\pitchfork(\eta_H,\eta_L)=0$. We can extend $\pitchfork$ by homogeneity to the set of rational
subset currents as
\[
\pitchfork(c_1\eta_H,c_2\eta_L):=c_1c_2\pitchfork(\eta_H,\eta_L).
\]
Note that
$\pitchfork(c_1\eta_H,c_2\eta_L)=\pitchfork(c_2\eta_L,c_1\eta_H)$ and
that $\pitchfork(c_1\eta_H,c_2\eta_L)=0$ if at least one of $H,L$ is infinite
cyclic. A particularly intriguing question is the following:
\begin{prob}\label{prob:pitch}
Does the map $\pitchfork$ extends to a continuous function
\[
\pitchfork:\gcn\times\gcn\to\gcn?
\]
\end{prob}
If yes, then composing  $\pitchfork$ with the reduced rank functional
$\overline{\rm rk}$ would give us a continuous, bilinear and \emph{symmetric}
\lq\lq intersection functional\rq\rq\ $J:= \overline{\rm rk}\, \circ \pitchfork$
\[
J : \gcn\times\gcn\to \mathbb R.
\]
Moreover, in view of Mineyev's recent proof of the Strengthened Hanna
Neumann Conjecture~\cite{Min}, it would follow that for any
$\mu_1,\mu_2\in \gcn$ we have
\[
J(\mu_1,\mu_2)\le \overline{\rm rk}(\mu_1) \overline{\rm rk}(\mu_2).
\]

\subsection{Random subgroup graphs and uniform currents}\label{subsec:random}

Let $A$ be a free basis of $F_N$.

Let $\xi=x_1x_2\dots x_n\dots \in \partial F_N$
be a \lq\lq random\rq\rq\ geodesic ray over $A^{\pm 1}$, that is, $\xi$ is a random trajectory of the non-backtracking simple random walk on $F_N$ corresponding to $A$. Denote $\xi|_n=x_1\dots x_n\in F_N$. 
It is shown in~\cite{Ka2} for a.e. $\xi\in \partial F_N$ we have
\[
\lim_{n\to\infty} \frac{\eta_{\xi|_n}}{n}=m_A \text{ in } \Curr(F_N)
\]
where $m_A\in \Curr(F_N)$ is the \emph{uniform current} on $F_N$ corresponding to $A$.
For a random $\xi\in \partial F_N$ we may think of $\langle \xi|_n\rangle\le F_N$ as a \lq\lq random\rq\rq\ cyclic subgroup of $F_N$.

In Section~\ref{sec:uniform} we have defined a family of uniform subset currents $m_{A,d}\in \gcn$, for $d=2,\dots, 2N$, where $m_{A,2}=m_A\in \Curr(F_N)$.
Recall that for a finite non-degenerate subtree $K$ of the Cayley graph $X$ of $F_N$ with respect to $A$ we have $( K; m_{A,d}) >0$ if and only if $K$ is a $d$-regular tree.

\begin{prob}  Let $3\le d\le 2N-1$.
Define a reasonable discrete-time random process such that at time $n$ it outputs a $d$-regular finite $R_N$-core graph $\Delta_n$ with $\lim_{n\to\infty} \#V\Delta_n=\infty$ and show that for a.e. trajectory of this process we have
\[
\lim_{n\to\infty}\frac{\eta_{\Delta_n}}{\#V\Delta_n}=m_{A,d} \text{ in }\gcn.
\]
\end{prob}
The same problem is also interesting for the absolute uniform current $m_A^\mathcal S\in \gcn$.

\begin{prob}
Define a reasonable discrete-time random process such that at time $n$ it outputs a finite $R_N$-core graph $\Delta_n$ with $\lim_{n\to\infty} \#V\Delta_n=\infty$ and show that for a.e. trajectory of this process we have
\[
\lim_{n\to\infty}\frac{\eta_{\Delta_n}}{\#V\Delta_n}=m_{A}^\mathcal S \text{ in }\gcn.
\]
In particular, what happens when $\Delta_n$ is chosen  using the Bassino-Nicaud-Weil~\cite{BNW} process for generating random Stallings subgroup graphs? 
\end{prob}

\subsection{Dynamics of the $\Out(F_N)$-action on $\pgcn$}\label{subsec:dynamics}

For ordinary currents the dynamics of the $\Out(F_N)$-action on $\PCN$ and the interaction of this dynamics with that of the $\Out(F_N)$-action on $\cvnbar$ turned out to be particularly useful. For $\pgcn$ the dynamics of the $\Out(F_N)$-action appears to be more complicated than in the $\PCN$-case, particularly because, even without projectivization, $\Out(F_N)$ fixes the point $\eta_{F_N}\in \gcn$.

In~\cite{Martin} R.~Martin introduced the subset $\mathcal M_N\subseteq \PCN$ as the closure in $\PCN$ of the set
\[
\{[\eta_g]: g\in F_N \text{ is a primitive element} \}.
\]
It is easy to see that for every $N\ge 2$ $\mathcal M_N\subseteq \PCN$ is a closed $\Out(F_N)$-invariant nonempty subset.
In \cite{KL1} it was shown that for $N\ge 3$ $\mathcal M_N$ is the unique smallest such subset, that is, whenever $Z\subseteq \PCN$ is nonempty, closed and  $\Out(F_N)$-invariant, then $\mathcal M_N\subseteq Z$.

As noted in Remark~\ref{rem:curr} above, $\PCN\subseteq \pgcn$ is a closed $\Out(F_N)$-invariant subset and, similarly, $\Curr(F_N)\subseteq \gcn$ is a closed $\Out(F_N)$-invariant subset.  
The set $\{[\eta_{F_N}]\}$ is also a closed $\Out(F_N)$-invariant subset
of $\pgcn$, so that $\pgcn$ contains at least 2 minimal nonempty closed $\Out(F_N)$-invariant subsets.

\begin{prob}
Let $N\ge 3$. Characterize those $[\mu]\in \pgcn$ such that the closure of the $\Out(F_N)$-orbit in $\pgcn$ contains $\mathcal M_N$. Is it true that the closure of $\Out(F_N)[\mu]$ contains $\mathcal M_N$ if and only if $[\mu]\ne [\eta_{F_N}]$?
\end{prob}
As we note below, we do know that for every nontrivial finitely generated subgroup $H\le F_N$ of infinite index, the closure of $\Out(F_N)[\eta_H]$ does contain $\mathcal M_N$.

Let $N\ge 3$ and let $\phi\in \Out(F_N)$ be an atoroidal iwip (irreducible with irreducible powers) element. In this case it is known that $\phi$ has exactly two distinct fixed points in $\PCN$, the currents $[\mu_+]$ and $[\mu_-]$; moreover there are $\lambda_+, \lambda_- > 1$ such that $\phi\mu_+=\lambda_+\mu_+$ and $\phi^{-1}\mu_{-}=\lambda_-\mu_-$. 
R.~Martin proved~\cite{Martin} that the action of $\phi$ on $\PCN$ has \lq\lq North-South\rq\rq\ dynamics:
If $[\mu]\in \PCN, [\mu]\ne [\mu_-]$ then
$
\lim_{n\to\infty} \phi^n[\mu]=[\mu_+]
$
and
if $[\mu]\in \PCN, [\mu]\ne [\mu_+]$ then
$
\lim_{n\to\infty} \phi^{-n}[\mu]=[\mu_-].
$

As observed above, by Remark~\ref{rem:curr} we have $\mu_\pm\in \Curr(F_N)\subseteq \gcn$ and $[\mu_\pm]\in \PCN\subseteq \pgcn$.

The situation for $\pgcn$ is immediately complicated by the presence of the global fixed point $[\eta_{F_N}]$. For example, for any $c_1,c_2>0$ it is not hard to see that
\[
\lim_{n\to\infty} \phi^n[c_1\mu_-+c_2\eta_{F_N}]=[\eta_{F_N}]
\quad \text{ and } \quad
\lim_{n\to\infty} \phi^{-n}[c_1\mu_+ + c_2\eta_{F_N}]=[\eta_{F_N}].
\]

\begin{prob}
Let $N\ge 3$ and let $\phi\in\Out(F_N)$ be an atoroidal iwip.
Characterize those $[\mu]\in \pgcn$ for which
\[
\lim_{n\to\infty} \phi^n[\mu]=[\mu_+].
\]
Is it true that the above convergence holds for every $[\mu]\in \pgcn$ which is not of the form $[\mu]=[c_1\mu_-+c_2\eta_{F_N}]$ where $c_1,c_2\ge 0$ and $|c_1|+|c_2|>0$?
\end{prob}

Using the results of~\cite{BFH97} we can show that if $H\le F_N$ is a nontrivial finitely generated subgroup of infinite index, then
\[
\lim_{n\to\infty} \phi^n[\eta_H]=[\mu_+] \text{  and  } \lim_{n\to\infty} \phi^{-n}[\eta_H]=[\mu_-].
\]
Since $[\mu_+]\in \mathcal M_N$, this fact does imply that the closure in $\pgcn$ of the orbit $\Out(F_N)[\eta_H]$ contains $\mathcal M_N$.

\subsection{Subset algebraic laminations}\label{subsec:lamination}

An \emph{algebraic lamination} on $F_N$ is a closed $F_N$-invariant subset of the space of 2-elements subsets of $(\bd F_N)$.  Algebraic laminations proved to be useful objects in the study of $\Out(F_N)$ and of the Outer space. In particular, for every $T\in\cvnbar$ there is a naturally defined \emph{dual lamination} $L^2(T)$ which records some essential information about the geometry of the action of $F_N$ on $T$. 

For an arbitrary current $\mu\in\Curr(F_N)$ its \emph{support} is an algebraic lamination. It is proved in \cite{KL3} that for $\mu\in\Curr(F_N)$ and $T\in\cvnbar$ $\langle T,\mu\rangle=0$ if and only if $\supp(\mu)\subseteq L^2(T)$, and this fact plays a key role in understanding the interplay between the dynamics of $\Out(F_N)$-actions on $\CVNbar$ and $\PCN$.  

By analogy with the set-up discussed above, we say that a \emph{subset algebraic lamination} on $F_N$ is a closed $F_N$-invariant subset of $\mathfrak C_N$.
In particular, for any $\mu\in\gcn$, the support $\supp(\mu)$ is a subset algebraic lamination. Here $\supp(\mu):=\mathfrak C_N-U$ where $U$ is the largest open subset of $\mathfrak C_N$ such that $\mu(U)=0$.

\begin{prob}
Assuming that the answer to Problem~\ref{prob:covol} is positive,  given $T\in\cvnbar$ does there exist a naturally defined subset lamination $L^2_\mathcal S(T)\subseteq \mathfrak C_N$ which captures the information about all $\mu\in \gcn$ with $||\mu||_T=0$? Or at least some naturally defined subset lamination $L^2_\mathcal S(T)\subseteq \mathfrak C_N$ which captures the information about all finitely generated $H\le F_N$ with $||H||_T=0$?
If such a notion of $L^2_\mathcal S(T)$ does exist, how does $L^2_\mathcal S(T)$ look like for stable trees of various free group automorphisms?

Note that for any reasonable definition of $L^2_\mathcal S(T)$ one expects to have $L^2(T)\subseteq L^2_{\mathcal S(T)}$.
 \end{prob}

\subsection{Approximation and weight realizability problem}\label{subsec:weight}

We know by Theorem~\ref{thm:dense} that any $\mu\in \gcn$ can be approximated
by rational subset currents. It would be interesting to find
explicit procedures (e.g. algorithmic or probabilistic) for producing
such approximations in various specific contexts.

\begin{prob}
Let $A$ be a free basis of $F_N$ and let $m_A^\mathcal S$ be the corresponding absolute uniform current. Find a natural probabilistic process producing a sequence of core graphs $\Delta_n$ such that $\lim_{n\to\infty} [\mu_{\Delta_n}] =[m_A^\mathcal S]$ in $\pgcn$.
\end{prob}

In \cite{Martin} Martin proves that for a marking $\alpha:F_N\to \pi_1(\Gamma)$ and a nonzero geodesic current $\mu\in Curr(F_N)$ we have $( v; \mu)_\alpha\in \mathbb Z$ for every nontrivial reduced edge-path $v$ in $\Gamma$ if and only if $\mu=\eta_{g_1}+\dots +\eta_{g_m}$ for some $m\ge 1$ and some nontrivial $g_1,\dots, g_m\in F_N$.
It is natural to ask a similar question for subset currents:

\begin{prob}
Let  $\alpha:F_N\to \pi_1(\Gamma)$ be a marking and let $\mu\in \gcn$ be a nonzero subset current such that for every non-degenerate finite subtree $K$ of $\widetilde\Gamma$ we have $(K; \mu)_\alpha\in \mathbb Z$. Does this imply that $\mu=\eta_{H_1}+\dots +\eta_{H_m}$ for some $m\ge 1$ and some nontrivial finitely generated subgroups $H_1,\dots, H_m\le F_N$? 
\end{prob}

In the case of $\Curr(F_N)$, the proof uses Whitehead graphs for cyclic words. In the context of subset currents the corresponding objects turn out to be hyper-graphs, rather than graphs.

Recall that a \emph{hyper-graph} $\mathbf G$ is a pair $(\mathbf V,\mathbf E)$, where $\mathbf V$ is the set of vertices and $\mathbf E$ is the set of hyper-edges. Every hyper-edge $\mathbf e$ is a nonempty subset of $\mathbf V$, whose elements are said to be \emph{incident} to $\mathbf e$ in $\mathbf G$.
We can also think of $\mathbf e$ as the characteristic function of this subset, so that $\mathbf e: \mathbf V\to\{0,1\}$ with $\mathbf e(\mathbf v)=1$ if and only if $\mathbf v$ is incident to $\mathbf e$.

Now let $\alpha:F_N\isom \pi_1(\Gamma)$ be a marking, let $X=\widetilde \Gamma$ and let $K\in \mathcal K_\Gamma$ be a finite non-degenerate subtree of $X$.  Recall from Section \ref{sec:uniform} that $\iota(K)$ denotes the number of interior vertices in $K$. 

We say that an interior vertex $x$ of $K$ is a \emph{boundary-interior vertex} of $K$ if $x$ is an interior vertex of $K$,  $x$ is adjacent to a terminal edge of $K$ and removing from $K$ all terminal edges of $K$ incident to $x$ produces a tree in which $x$ has degree $1$.  That is, $x$ is an interior vertex of $K$ and the degree of $x$ in $K$ is equal to $p+1$ where $p$ is the number of terminal edges of $K$ incident to $x$.
Note that every finite $K$ with $\iota(K)\ge 2$ always has at least one boundary-interior vertex. 
For each boundary-interior vertex $x$ of $K$ let $U_x$ be the set of topological edges of $K$ that connect $x$ to vertices of degree 1 of $K$. Let $K_x$ be obtained from $K$ by removing all the edges of $U_x$. Thus $K=K_x\cup U_x$. Note that $x$ is a vertex of degree 1 in $K_x$, so that $\iota(K_x)=m-1$.

For each $m\ge 2$ define the \emph{level-$m$ initial hyper-graph}  $\mathbf G_{\alpha,m}$ as follows. The vertex set of $\mathbf G_{\alpha,m}(\mu)$ is the set $Z_{m-1}$ of all $F_N$-translation classes $[K]$ where $K\in\mathcal K_\Gamma$ is a tree with $\iota(K)=m$. The hyper-edge set is $Z_{m}$. Every $[K']\in Z_m$ defines the incidence function $\mathbf e_{[K']}: Z_{m-1}\to \{0,1\}$ as follows. For $[K]\in Z_{m-1}$ $\mathbf e_{[K']}([K])=1$ if and only if there is a boundary-interior vertex  $x$ of $K'$ such that $[K_x']=[K]$. 

For $m=1$ we can also define $\mathbf G_{\alpha,1}$ in a similar way. The vertex set is $Z_0$ and the hyper-edge set is $Z_1$, but the incidence is defined slightly differently. Namely, for $[K']\in Z_1$  and an element $[K]\in Z_0$ (note that $K$ is necessarily a single topological edge of $X$) we have $\mathbf e_{[K']}([K])=1$ if and only if there is a topological edge of $K'$ which is an $F_N$-translate of $K$.

For $m\ge 1$, a  \emph{weighted level-$m$ initial hyper-graph} consists of the hyper-graph $\mathbf G_{\alpha,m}$ endowed with the  \emph{weight functions} $\theta: Z_{m-1}\to \R_{\ge 0}$ and $\theta: Z_{m}\to \R_{\ge 0}$.

Every current $\mu\in \gcn$ defines a weighted hyper-graph $\mathbf G_{\alpha,m}(\mu)$, with the weight functions $\theta([K']):=( K'; \mu)_\alpha$,  $\theta([K]):=(K; \mu)_\alpha$, where $[K']\in Z_m$, $[K]\in Z_{m-1}$.


\begin{prob}\label{prob:wr}
Let $m\ge 1$ and let $(\mathbf G_{\alpha,m}, \theta)$ be a weighted initial level-$m$ hyper-graph such that the weight function $\theta$ is $\mathbb Z_{\ge 0}$--valued and satisfies the Kirchoff condition (see Proposition \ref{prop:kirch}) for every $[K]\in Z_{m-1}$.
Does there exist a finite $\Gamma$-core graph $\Delta$ such that $(\mathbf G_{\alpha,m}, \theta)=\mathbf G_{\alpha,m}(\mu_\Delta)$?
\end{prob}

As shown in \cite{Ka2}, in the $\Curr(F_N)$ context the analog of the above question has positive answer. In that case initial hyper-graphs are directed graphs and the Kirchoff condition implies the existence of an Euler circuit in every connected component of the graph, when the weights are treated as multiple edges. However, it is not clear what might serve as a substitute for Euler's theorem in the hyper-graph context, and even the proof of Theorem~\ref{thm:dense} does not appear to help with  Problem~\ref{prob:wr}.

\end{document}